\documentclass[12pt]{amsart}
\overfullrule = 10cm
\usepackage{etex} 
\usepackage[active]{srcltx}

\usepackage{calc,amssymb,amsthm,amsmath,amscd, eucal,ulem, mathtools}
\usepackage{phaistos}
\usepackage{alltt}
\usepackage{microtype}
\synctex=1
\RequirePackage[dvipsnames,usenames]{color}

\normalem
\input{mabliautoref.sty}
\input{xy}
\xyoption{all}
\input{kmacros3.sty}
\usepackage{tikz}
\usepackage{tikz-cd}
\usepackage[marvosym]{tikzsymbols}
\usepackage{amsfonts, mathrsfs}
\usepackage[cal=boondox, calscaled=1.05]{mathalfa}
\usepackage{calligra}
\usepackage{stmaryrd} 
\usepackage[left=1in,top=1in,right=1in,bottom=1in]{geometry}
\usepackage{bm}
\usepackage{verbatim}
\usepackage{upgreek}

\numberwithin{equation}{theorem}

\renewcommand{\:}{\colon}
\newcommand{\eg}{{\itshape e.g.} }
\renewcommand{\m}{\mathfrak{m}}
\renewcommand{\n}{\mathfrak{n}}

\newcommand{\kay}{\mathcal{k}}

\DeclareMathOperator{\pd}{pd}




\DeclareMathOperator{\Cl}{Cl}

\usepackage{setspace}
\usepackage{hyperref}
\usepackage{enumerate}
\usepackage{graphicx}
\usepackage[all,cmtip]{xy}

\newcommand{\mytau}{{\uptau}}

\theoremstyle{theorem}
\newtheorem{Theoremx}{Theorem}


\renewcommand{\O}{\mathcal O}

\usepackage{marvosym}

\newcommand{\BCMReg}[1]{{$\textnormal{BCM}_{#1}$-regular}}
\newcommand{\BCMRat}[1]{{$\textnormal{BCM}_{#1}$-rational}}

\begin{document}
\title[]{Covers of rational double points in mixed characteristic}
\author[J.~Carvajal-Rojas]{Javier Carvajal-Rojas}
\address{\'Ecole Polytechnique F\'ed\'erale de Lausanne\\ SB MATH CAG\\MA C3 615 (B\^atiment MA)\\ Station 8 \\CH-1015 Lausanne\\Switzerland \newline\indent
Universidad de Costa Rica\\ Escuela de Matem\'atica\\ San Jos\'e 11501\\ Costa Rica}
\email{\href{mailto:javier.carvajalrojas@epfl.ch}{javier.carvajalrojas@epfl.ch}}
\author[L.~Ma]{Linquan Ma}
\address{Department of Mathematics\\ Purdue University\\  West Lafayette\\ IN 47907\\USA}
\email{\href{mailto:ma326@purdue.edu}{ma326@purdue.edu}}
\author[T.~Polstra]{Thomas Polstra}
\address{Department of Mathematics\\ University of Virginia\\ Charlottesville\\ VA 22903\\USA}
\email{\href{mailto:tp2tt@virginia.edu}{tp2tt@virginia.edu}}
\author[K.~Schwede]{Karl Schwede}
\address{Department of Mathematics\\ University of Utah\\ Salt Lake City\\ UT 84112\\USA}
\email{\href{mailto:schwede@math.utah.edu}{schwede@math.utah.edu}}
\author[K.~Tucker]{Kevin Tucker}
\address{Department of Mathematics\\ University of Illinois at Chicago\\Chicago\\  IL 60607}
\email{\href{mailto:kftucker@uic.edu}{kftucker@uic.edu}}

\thanks{JCR was supported in part by NSF CAREER Grant DMS \#1252860/1501102 and by the ERC-STG \#804334, LM was supported in part by NSF Grant \#1901672, and was supported in part by \#1836867/1600198 when preparing this article, TP was supported in part by NSF Grant \#1703856 and NSF Grant \#2101890, KS was supported in part by NSF CAREER Grant \#1252860/1501102 and NSF grant \#1801849, KT was supported in part by  NSF grant DMS \#1602070
and \#1707661, and by a fellowship from the Sloan Foundation.}



\begin{abstract} We further the classification of rational surface singularities. Suppose $(S,\fran, \kay)$ is a $3$-dimensional strictly Henselian regular local ring of mixed characteristic $(0,p>5)$. We classify functions $f$ for which $S/(f)$ has an isolated rational singularity at the maximal ideal $\fran$. The classification of such functions are used to show that if $(R,\fram, \kay)$ is an excellent, strictly Henselian, Gorenstein rational singularity of dimension 2 and mixed characteristic $(0,p>5)$, then there exists a split finite cover of $\Spec(R)$ by a regular scheme. We give an application of our result to the study of 2-dimensional \BCMReg{} singularities in mixed characteristic.
\end{abstract}

\maketitle

\section{Introduction}

The study of surface singularities in algebraic geometry is a classical subject. Of particular interest is the collection of normal surfaces that remain normal under blowups of singular points. Such surfaces are seen to be cohomologically trivial; if $X$ belongs to the set of surfaces just described and $X' \xrightarrow{\rho} X$ is proper and birational, then $R^1\rho_*\O_{X'}=0$. Such surfaces are said to have \emph{rational singularities}, their study was initiated by Du Val in \cite{DuVal1, DuVal2, DuVal3}, and defined by Artin in \cite{ArtinOnisolatedrational}.

Suppose that $X$ is the spectrum of a local $2$-dimensional ring $(R,\fram,\kay)$ with a Gorenstein isolated rational singularity at the closed point. It is known that, under these assumptions, we have $\widehat{R}\cong S/(f)$ where $(S,\fran,\kay)$ is a regular local ring of dimension $3$ and $f\in \fran^2-\fran^3$ (see \autoref{lem.BS}).

Following tradition, such hypersurface singularities are referred to as \emph{rational double points}.  Suppose further that $S$ is strictly Henselian.    That is,  $S$ satisfies Hensel's lemma (e.g.  $S$ is complete) and has separably closed residue field; see \cite[I, \S4]{MilneEtaleCohomology} for further details. If $S$ contains a field then it is known that there exists a finite cover $Y \xrightarrow{\pi} X$ such that $Y$ is the spectrum of a regular local ring.  It is known that, in the equicharacteristic $0$ and prime characteristic $p>5$ scenarios, the induced map $\O_X \to \pi_* \O_Y$ is split as a map of $\O_X$-modules. Before discussing details and appropriate references of the equicharacteristic results, we state our main theorem which generalizes this result to the mixed characteristic setting.  The proof of this is completed in \autoref{sec.FiniteCoversOfRationalDoublePoints}.

\begin{Theoremx}\label{Main Theorem covers by regular ring}
Let $(R,\fram,\kay)$ be an excellent and strictly Henselian local ring of mixed characteristic $(0,p>5)$.\footnote{For instance,  it could be the strict Henselization of an excellent mixed characteristic local ring or well its completion in case its residue field is separably closed. Strict Henselizations can be thought of as the geometric germs with respect to the \'etale topology.  See \cite[I, \S4]{MilneEtaleCohomology}.} Suppose $R$ is a Gorenstein rational singularity of dimension $2$. Then $R$ is a rational double point and there exists a finite cover $Y\xrightarrow{\pi}X=\Spec(R)$ such that $\O_X \to \pi_* \O_Y$ splits as a map of $\O_X$-modules and $Y$ is a regular scheme.
\end{Theoremx}

In the scenario that $R$ contains a field of characteristic $0$, the existence of a finite cover of $X$ by a regular scheme is accomplished by realizing $R$ as a quotient singularity of a finite subgroup of $G\subset \mbox{SL}_2$, see \cite{PrillLocalclassification}. Every equicharacteristic $0$ normal domain is a {\it splinter}, i.e., it splits off from all its module-finite extension. In particular, the finite cover splits.

If $R$ contains a field of prime characteristic $p>0$, Artin provides an explicit description of all possible functions $f\in \fran^2-\fran^3$, in terms of choices of minimal generators of the maximal ideal $\fran$, so that $R=S/(f)$ is a rational double point. The explicit descriptions provided by Artin can then be used to show the existence of a finite cover $Y\xrightarrow{\pi} X$ such that $Y$ is regular, \cite{ArtinCoveringsOfTheRtionalDoublePointsInCharacteristicp} (note Lipman had previously worked out explicit equations in the $\textnormal{E}_8$ case, even in mixed characteristic; see \cite{LipmanRationalSingularities}).  It is then straightforward to use Artin's classification of rational double points in prime characteristic $p>5$ to verify that all such singularities are $F$-regular (a class of singularities coming out of Hochster and Huneke's tight closure theory \cite{HochsterHunekeTightClosureAndStrongFRegularity}). In particular, rational double points of prime characteristic $p>5$ are splinters, \cite[Theorem~5.25]{HochsterHunekeTCParameterIdealsAndSplitting}, and therefore the finite cover by a regular scheme must split.  However, there exists rational double points in characteristic  $2$, $3$, and $5$ which are not $F$-regular.\footnote{For example,  if $S$ is the completion of $\overline{\mathbb{F}}_3[x,y,z]$ at the maximal ideal $(x,y,z)$, then the function $f=x^2+y^3+z^5$ defines a rational double point which is not $F$-regular (in fact, it is easy to see using Fedder's criterion \cite{FedderFPureRat} that it is not even $F$-pure). We will see a similar phenomenon in mixed characteristic: \autoref{example}.} In particular, the finite cover by a regular scheme cannot split since direct summands of regular rings are $F$-regular, \cite[Proposition~4.12]{HochsterHunekeTC1}.  It is worth noting that $F$-regular singularities are precisely the positive equicharacteristic BCM-regular singularities which are the subject of \autoref{sec.CyclicCoversOfBCM} in mixed characteristic.

Similar to the methodology of Artin, to prove Theorem~\autoref{Main Theorem covers by regular ring} we will first classify the functions in the maximal ideal of a $3$-dimensional regular local ring which define a rational double point.  In fact, Lipman already did this for the $\textnormal{E}_8$ case assuming the residual characteristic $p > 5$ as we do; see \cite[Section 25]{LipmanRationalSingularities}.  We were heavily inspired by his work.  We will prove the following in \autoref{sec.Classification}.

\begin{Theoremx}\label{Main Theorem description of functions} Let $(S,\fran,\kay)$ be a $3$-dimensional complete regular local ring of mixed characteristic $(0,p>5)$ with separably closed residue field and $f\in \fran^2-\fran^3$ so that $X=\Spec(R = S/(f))$ is a rational double point. Then,  there exists a choice of minimal generators $x,y,z$ of the maximal ideal of $S$ so that, up to multiplication by a unit,  $f$ can be written in one of the following forms:
\[
\begin{cases}
x^2 +y^2 + z^{n+1} & \textnormal{A}_n \quad (n\geq 1)  \\
x^2+y^2z+z^{n-1} & \textnormal{D}_n \quad (n\geq 4) \\
x^2+y^3+z^{4} & \textnormal{E}_6  \\
x^2+y^3+yz^{3} & \textnormal{E}_7 \\
x^2+y^3+z^{5} & \textnormal{E}_8
\end{cases}
\\
\]
However,  in contrast to the  equicharacteristic case,  one of the above forms for $f$ can yield two non-isomorphic rational double points $S/f$; see \autoref{ex.OnefManyRDPs} below.
\end{Theoremx}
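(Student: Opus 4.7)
My strategy is to reduce $f$ to one of the claimed normal forms by a sequence of changes of the regular system of parameters, mimicking the classical analysis in equicharacteristic zero. The hypothesis $p > 5$ guarantees $2, 3, 5 \in S^\times$, which is exactly what is needed to make the standard ``completing the square'' and ``completing the cube'' manipulations go through in the mixed characteristic setting. The case breakdown is governed by the rank of the quadratic part of $f$ and, in the rank-one case, by the factorization of the resulting cubic part over the separably closed residue field.

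First I analyze the class $\overline f \in \fran^2/\fran^3 \cong \Sym^2(\fran/\fran^2)$, which is a nonzero quadratic form on the $3$-dimensional $\kay$-vector space $\fran/\fran^2$. Since $\kay$ is separably closed and $p > 2$, this form diagonalizes; lifting the change of basis to $S$ (extracting unit square roots of the coefficients via Hensel's lemma) allows me to choose regular parameters $x, y, z$ with $f \equiv \epsilon_1 x^2 + \epsilon_2 y^2 + \epsilon_3 z^2 \mod{\fran^3}$, where $\epsilon_i \in \{0, 1\}$. Let $r$ denote the rank. In the case $r = 3$, a Morse--Hensel argument (writing any error $g \in \fran^N$ as $g = xa + yb + zc + g_{>N}$ with $a, b, c \in \fran^{N-1}$ and $g_{>N} \in \fran^{N+1}$, then substituting $x \mapsto x - a/2$ and analogously for $y, z$, which for $N \geq 3$ yields a new error in $\fran^{N+1}$ since $2(N-1) \geq N+1$) together with $\fran$-adic completeness gives $f = x^2 + y^2 + z^2$, the $A_1$ form. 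If $r = 2$, Weierstrass preparation in $x$ (valid since $f$ is $x$-regular of order $2$) followed by completing the square reduces $f$, up to a unit, to $x^2 + b(y, z)$ with $b \in (y, z)^2$ of leading form $y^2$; a second round in $y$ yields $f = u(x^2 + y^2 + c(z))$, and the isolated singularity hypothesis forces $c$ to be a unit times $z^{n+1}$, giving $A_n$.

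If $r = 1$, Weierstrass preparation in $x$ and completing the square give $f = u(x^2 + c(y, z))$ with $c \in (y, z)^3$, and I consider its cubic leading form $\overline c$, which factors completely into linear factors in $\kay[y, z]$. If $\overline c$ has three distinct factors, a linear change of $(y, z)$ brings $f$ to the $D_4$ form $x^2 + y^2 z + z^3$. If $\overline c$ has one simple and one double factor, I normalize $\overline c = y^2 z$; then $f$ is $z$-regular of some finite order $n - 1 \geq 4$ by the isolated singularity condition, and Weierstrass in $z$ plus routine cleanup of subleading terms yields the $D_n$ form $x^2 + y^2 z + z^{n-1}$. If $\overline c$ is a triple factor, I normalize to $\overline c = y^3$ and apply Weierstrass in $y$ together with completion of the cube (valid since $1/3 \in S$) to obtain
\[
f = u\bigl(x^2 + y^3 + q_1(z)\, y + q_0(z)\bigr)
\]
with $q_0, q_1 \in (z)$. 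Tschirnhaus-style substitutions $y \mapsto y + \gamma(z)$ together with an inspection of the $z$-orders of $q_0$ and $q_1$ then separate the three cases and produce one of $E_6 = x^2 + y^3 + z^4$, $E_7 = x^2 + y^3 + y z^3$, or $E_8 = x^2 + y^3 + z^5$, in the spirit of Lipman's analysis of $E_8$.

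The most delicate part of the plan is the $E$-series: iterating the Tschirnhaus substitutions against the Newton-polygon data of $(q_0, q_1)$ to arrive at the exact claimed normal forms (and in particular distinguishing $E_7$, with its mixed term $yz^3$, from the purely $z$-adic $E_6$ and $E_8$) requires some care, and Lipman's arguments in \cite{LipmanRationalSingularities} serve as my main guide. The assumption $p > 5$ enters precisely here, ensuring that all denominators $2, 3, 5$ appearing in the completing-the-square/cube manipulations remain invertible in $S$, so the entire classical reduction proceeds verbatim as in equicharacteristic zero.
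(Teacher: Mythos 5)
Your overall strategy --- diagonalize the quadratic part, split into cases by its rank, and in the rank-one case factor the cubic leading form over the separably closed residue field before completing squares and cubes --- is exactly the strategy of the paper, and your Morse--Hensel iteration in the rank-three case is a clean version of what the paper does there. The genuine gap is your repeated appeal to Weierstrass preparation ``in $x$'' (and later in $y$ and $z$). In mixed characteristic $S$ has no coefficient field, and for a general regular parameter $x$ there is no complete subring $A \subset S$ with $S \cong A\llbracket x\rrbracket$; this fails precisely in the central examples, e.g.\ $S = W(\kay)\llbracket y,z\rrbracket$ with $x = p$, where such a presentation would force $A \cong S/(p) = \kay\llbracket y,z\rrbracket$ to embed as a subring of $S$, i.e.\ would place a field of characteristic $p$ inside a ring of characteristic $0$. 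Consequently expressions such as ``$x^2 + b(y,z)$ with $b$ depending only on $y,z$'' or ``$c(z)$'' are not meaningful: one can only write error terms as $S$-linear combinations of monomials in $x,y,z$. The paper's substitute is to perform the completing-the-square/cube substitutions one degree at a time with coefficients in $S$, check that the successive coordinate changes form Cauchy sequences in the complete ring, and pass to the limit; termination of each iteration is then forced not by a preparation theorem but by normality (if the $\textnormal{A}_n$ iteration never stops one gets $f = \bar{x}^2 + \bar{y}^2$, which is not normal; similarly for $\textnormal{D}_n$ and $\textnormal{E}_7$). Every invocation of Weierstrass preparation in your plan needs to be replaced by such an argument.

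A second omission: rationality enters your sketch only through ``isolated singularity,'' but the paper needs the stronger fact \cite[Proposition~8.1]{LipmanRationalSingularities} that the blowup of a rational surface singularity at the closed point is again normal and rational. This is what shows the cubic leading form $\overline{c}$ in your rank-one case is nonzero (otherwise $f = x^2 + g$ with $g \in \fran^4$ and the blowup fails to be regular in codimension one), and, more importantly, it is what excludes the residual subcase of the $E$-series that your Newton-polygon discussion passes over: after all normalizations one is still left with the possibility $f = x^2 + y^3 + \eta_1 y z^4 + \eta_2 z^6$, which satisfies the order conditions short of $\textnormal{E}_6$, $\textnormal{E}_7$, $\textnormal{E}_8$ but is not a rational double point; the paper rules it out by exhibiting a non-rational chart of the blowup. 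Without these two uses of rationality the case analysis does not close up.
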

The notations $\textnormal{A}_n$, $\textnormal{D}_n$, $\textnormal{E}_6$, $\textnormal{E}_7$, and $\textnormal{E}_8$ are referred to as the type of the ring $R=S/(f)$ and correspond to the graph of the minimal resolution of $X$ obtained by quadratic transforms as described in \cite{LipmanRationalSingularities,LipmanDesingularizationOf2Dimensional}.  It is well known to experts that any complete Gorenstein rational singularity can be expressed as a hypersurface singularity $S/(f)$ (as we assume in the theorem),  but also see \autoref{lem.BS}.

\begin{example} \label{ex.OnefManyRDPs}
Unlike the equicharacteristic scenario,  the type of the singularity does not determine the singularity up to isomorphism.  Indeed, if $S$ is a strictly Henselian regular local ring of dimension $3$ and of mixed characteristic $(0,p)$,  then two elements $f,g\in S$ can define a rational double point of the same type but the rings $S/(f)$ and $S/(g)$ may not be isomorphic. For example,  let $\kay$ be an algebraically closed field of prime characteristic $p$ and $W(\kay)$ the ring of Witt vectors over $\kay$.  For the sake of concreteness,  one may take $\kay$ to be the algebraic closure of $\mathbb{F}_p$ and so $W(\kay)$ is the ring of integers in the completion of the maximal unramified extension of $\mathbb{Q}_p$.  More on Witt vectors can be found in \cite[Chapter II, Section 6]{SerreLocalFields} or \cite{RabinoffWittVectors}.
Then,  the singularities $W(\kay)\llbracket y,z\rrbracket\big/\bigl(p^2 +y^2 + z^3\bigr)$ and $W(\kay) \llbracket y,z\rrbracket\big/\bigl(z^2 +y^2 + p^3\bigr)$ are both of type $\textnormal{A}_2$, however are not isomorphic.  To see why these are not isomorphic,  observe that any ring homomorphism in between them will send $p$ to $p$.  Therefore if these were isomorphic,  they are isomorphic after moding out by $p$.  In our case,  we would get an isomorphism $\kay\llbracket y,z \rrbracket/(y^2+ z^3) \cong \kay\llbracket z,y \rrbracket/(z^2+ y^2) $ which is absurd (the former is a cusp whereas the latter is not an integral domain).  Of course, analogous examples can be constructed likewise for the other types.
\end{example}

As an application of Theorem~\autoref{Main Theorem covers by regular ring}, we will show in \autoref{sec.CyclicCoversOfBCM} that every $2$-dimensional \BCMReg{} singularity of mixed characteristic $(0,p>5)$ is a finite direct summand of a regular ring.

In summary, this article concerns itself with the classification of $2$-dimensional rational hypersurface singularities $R = S/(f)$ in the mixed characteristic case.  The theory is well understood when $S$ is equicharacteristic and we refer the reader to \cite{ArtinCoveringsOfTheRtionalDoublePointsInCharacteristicp,LipmanRationalSingularities,GreuelKroningSimpleSingularitiesInPosChar} for details.  Classification of rational double points in small mixed characteristics would be desirable and is still open.  We do not attempt it but it would be a natural project for someone to carry out in the future.

\subsection*{Acknowledgements}
The authors thank H.~Esnault, J.~Lipman, B.~Martinova, and C.~Xu for valuable conversations.  Related study and discussion of surface singularities also took place at two AIM SQUARES in 2017 and 2019 attended by the authors Ma, Schwede and Tucker (and also attended by C.~Liedtke, Z.~Patakfalvi, J.~Waldron, and J.~Witaszek).  Several of the authors also worked on aspects of this paper while visiting Oberwolfach in February 2019.  We thank J.~Lipman for comments on a previous draft, particularly pointing out to us the relation of \cite{KleinLecturesIcosahedron} to \autoref{thm.CoversE8}.  We also thank B.~Martinova for valuable comments on a previous draft.  Finally,  we are very thankful to the anonymous referee for several useful comments and suggestions,  particularly for making us aware of the work \cite{GreuelKroningSimpleSingularitiesInPosChar} and that \autoref{lem.SplittingLemma} is a weak version of what is known as the ``splitting lemma'' among experts (see \autoref{remark:splittinglemmaremark}) in equal characteristic zero.

\section{Prelimaries}

\subsection{Surface singularities}

All rings and schemes in this article are assumed to be excellent.  Every excellent surface, independent of characteristic, admits a resolution of singularities, \cite{LipmanRationalSingularities, LipmanDesingularizationOf2Dimensional}. Of particular interest are surfaces with rational singularities. A surface $X$ has rational singularities if for some, equivalently all, resolution of singularities $Y\xrightarrow{\pi} X$ has the property that $R^1\pi_* \O_X=0$. If $X$ is a surface with rational singularities then $X$ can be resolved by quadratic transforms, i.e., by blowing up closed singular points one at a time. If $X$ is a surface with rational singularities and $Z\to X$ is a quadratic transform, then $Z$ has rational singularities. In particular, $Z$ is a normal surface, \cite[Proposition~8.1]{LipmanRationalSingularities}. The property that a sequence of quadratic transform preserves normality characterizes rational surface singularities and is fundamental to the proof of Theorem~\ref{Main Theorem description of functions}.

Resolving a rational surface singularity $X$ by quadratic transforms produces a minimal resolution $Y$ of $X$. The dual graph of $X$ is the graph whose vertices correspond to exceptional curves of the minimal resolution of $X$ and edges connect two vertices provided those two curves intersect in $Y$. The possible graphs that can be obtained fall under the classification of the Dynkin diagrams $A_n, D_n, E_6, E_7,$ and $E_8$ and are referred to as the type of $X$.

\subsection{$\mathbb{Q}$-Cartier divisors and cyclic covers}

Let $X$ be a normal scheme. A Weil divisor $D$ of $X$ is said to be $\mathbb{Q}$-Cartier if  there exists a natural number $n>0$ such that $nD$ is a Cartier divisor. Suppose that $D$ is a Weil divisor and $nD \sim 0$. Fix an isomorphism $\O_X \cong \O_X(-nD)$.  Then, the index $n$ cyclic cover of $\O_X$ relative to the $\Q$-Cartier divisor $D$ (and to the fixed isomorphism $\O_X \cong \O_X(-nD)$) is the $\O_X$-algebra
\[
C=\O_X\oplus \O_X(D)\oplus \O_X(2D)\oplus \cdots \oplus \O_X((n\\
-1)D),
\]
where multiplication in $C$ is determined by the natural multiplication maps
\[
\O_X(iD)\otimes \O_X(jD)\to \O_X\big((i+j)D\big)
\]
and the previously fixed isomorphism $\O_X(-nD) \cong \O_X$: if $i+j\geq N$ then our previous isomorphism determines an isomorphism $\O_{X}((i+j)D)\cong \O_{X}((i+j-n)D)$.  Observe that the map $\O_X\to C$ splits as a map of $\O_X$-modules.


Suppose that $(R,\fram,\kay)$ is a local normal domain and $X=\Spec(R)$. If $I\subset R$ is a pure height $1$ ideal, then $I=\O_X(D)$ for some anti-effective divisor $D$ and $\O_X(iD)=I^{(i)}$ is the $i$th symbolic power of $I$ for each $i\in \N$. Therefore, if $nD$ is Cartier, that is if $I^{(n)}=(f)$ is a principal ideal, then the cyclic cover of index $n$ corresponding to $D$ and $f$ is the $R$-algebra
\[
C=R\oplus I\oplus I^{(2)}\oplus \cdots \oplus I^{(n-1)}.
\]
Multiplication in $C$ is determined by the natural multiplication maps $I^{(i)}\otimes I^{(j)}\to I^{(i+j)}$ and isomorphisms $I^{(i+j)}\xrightarrow{\cdot 1/f}I^{(i+j-n)}$ whenever $i+j\geq n$. The order of a Weil divisor is the least natural number $n$ so that $nD$ is Cartier. Cyclic covers of index equal to the order of the Weil divisor are domains.

\begin{proposition}{\cite[Corollary~1.9]{TomariWatanabeNormalZrGradedRings}}
\label{prop.Cyclic covers are domains}
Let $R$ be a normal domain, $X=\Spec(R)$, and $D$ a Weil divisor of order $n$. Then the cyclic cover
\[
C=\O_X\oplus \O_X(D)\oplus \O_X(2D)\oplus \cdots \oplus \O_X((n-1)D)
\]
is a domain.
\end{proposition}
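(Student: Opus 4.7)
The plan is to exhibit $C$ as a subring of its generic fiber $C \otimes_R K$ (with $K = \Frac R$) and to identify that fiber with a Kummer-type field extension of $K$. Because each piece $\O_X(kD)$ of $C$ is a rank-one torsion-free $R$-module sitting inside $K$ as a fractional ideal, the ring $C$ is $R$-torsion-free and $C \hookrightarrow C \otimes_R K$; thus it suffices to show $C \otimes_R K$ is a domain.

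To make the generic fiber explicit, the chosen isomorphism $\O_X \cong \O_X(-nD)$ corresponds to a nonzero rational function $\phi \in K^{\times}$ with $\divisor(\phi) = -nD$, so that $\O_X(nD) = R\phi$ inside $K$. After base change to $K$, each $\O_X(kD) \otimes_R K$ becomes a copy of $K$, and unraveling the multiplication table of $C$ under these identifications yields an isomorphism $C \otimes_R K \cong K[T]/(T^n - \phi^{-1})$, where $T$ corresponds to a $K$-generator of $\O_X(D) \otimes_R K$. One now has to show that $T^n - \phi^{-1}$ is irreducible over $K$.

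By the Vahlen--Capelli theorem, irreducibility of $T^n - a$ reduces to checking that $a \notin K^p$ for every prime $p \mid n$, together with $a \notin -4K^4$ when $4 \mid n$. Both conditions are forced by the minimality of $n$: if $\phi^{-1} = \psi^p$ in $K$ for some prime $p \mid n$, then $nD = \divisor(\phi^{-1}) = p\,\divisor(\psi)$ would make $(n/p)D$ principal, contradicting that $n$ is the order of $D$. The same divisor calculation rules out $\phi^{-1} \in -4K^4$ in characteristic $\ne 2$, and this exception is vacuous in characteristic $2$ since $-4 = 0$ is not a unit. Thus $C \otimes_R K$ is a field, so $C$ embeds into a field and is a domain. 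The main technical obstacle is the explicit identification of the generic fiber with $K[T]/(T^n - \phi^{-1})$---i.e., unpacking the cyclic cover's multiplication---while the Capelli step is a routine divisor computation.
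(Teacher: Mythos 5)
Your argument is correct and essentially complete; note that the paper offers no proof of this statement at all---it is simply quoted from Tomari--Watanabe, Corollary~1.9---so what you have written is a self-contained substitute rather than a variant of an internal argument. The reduction to the generic fibre is sound: each $\O_X(kD)$ is a fractional ideal in $K = \Frac(R)$, so $C$ is $R$-torsion-free and embeds into $C \otimes_R K$, and unwinding the multiplication (with $g = \phi^{-1}$ the chosen generator of $\O_X(-nD)$, so $\divisor(g) = nD$) does identify $C \otimes_R K$ with $K[T]/(T^n - g)$; the Capelli conditions for the primes $p \mid n$ then follow from the minimality of $n$ exactly as you say. The one place you are too quick is the exceptional case $g \in -4K^4$ with $4 \mid n$: it is not literally ``the same divisor calculation,'' because $\divisor(-4\psi^4) = \divisor(4) + 4\divisor(\psi)$ and $\divisor(4)$ need not vanish when $2$ is a nonunit of $R$ (for instance in mixed characteristic $(0,2)$, which the general statement must cover), so one cannot directly conclude that $(n/4)D$ is principal. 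The fix is immediate: $-4\psi^4 = -(2\psi^2)^2$, hence $nD = \divisor(g) = 2\divisor(2\psi^2)$ and $(n/2)D$ would be principal, contradicting that $D$ has order $n$. With that one sentence added, the proof is watertight, and it is arguably more elementary than invoking the graded-ring machinery of the cited reference.
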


\section{Classification of rational double points in mixed characteristic}
\label{sec.Classification}

Throughout this section, we denote by $(S,\fran,\kay)$ a regular local ring of dimension $3$, $(R,\fram,\kay)=S/(f)$ is a $2$-dimensional quotient of $S$ of multiplicity $2$, and the letters $x,y,z$ will be used to denote a choice of minimal generators of the maximal ideal of $S$ (and $R$). Theorem~\ref{Main Theorem description of functions} is a combination of the results in this section. We attempt to make the results in this section as general as possible and remark that any ring satisfying the hypotheses of Theorem~\ref{Main Theorem description of functions} satisfy the hypotheses of each of the statements found in this section. Our classification techniques are characteristic free in the sense that they do not depend on the characteristic of the ring, but only the on the characteristic of the residue field. Therefore the techniques of this section can be used to study the classification of rational double points in equicharacteristic $0$ and $p>5$.  These techniques should be compared to those in \cite{GreuelKroningSimpleSingularitiesInPosChar}.

\begin{lemma}[{cf. \cite[Theorem 11.1]{ArnoldGuseinZadeVarchenkoSingularitiesOfDifferentiableMaps} and \cite[Theorem 2.47]{GreuelLossenShustinIntroToSingularitiesAndDeformations}}] \label{lem.SplittingLemma}
Let $(S,\fran,\kay)$ be a $3$-dimensional strictly Henselian regular local ring and $\Char \kay >2$. Suppose that $f\in \fran^2-\fran^3$. Then there exists a choice of generators $x,y,z$ of the maximal ideal of $S$ so that $f=\tilde{f}+g$, $g\in \fran^3$, and $\tilde{f}$ is either $x^2+y^2+z^2$, $x^2+y^2$, or $x^2$.
\end{lemma}
\begin{proof}

Choose generators $x,y,z$ of the maximal ideal $\fran$. We begin by writing $f=\tilde{f}+g$ with $g \in \fran^3$, and $\tilde{f}$ is a ``quadratic form,'' in other words
\[
\tilde f = \bm{x}^T U \bm{x}
\]
where $U$ is a symmetric $3 \times 3$ matrix over $S$ whose entries are either $0$ or units and where $\bm{x}^T = \begin{bmatrix}
x&y&z
\end{bmatrix}$.

After reduction modulo $\fran$, we have that the symmetric matrix $U$ is (orthogonally) diagonalizable over $\kay$. This mean that after a linear change of variables we may assume $U$ is diagonal modulo $\fran$. By lifting this to $S$, we get that after choosing new minimal generators of $\fran$, $f$ can be written as
\[
f=ux^2+vy^2+wz^2 + g'
\]
where $u,v,w$ are either $0$ or units, and $g' \in \fran^3$. Moreover, since $S$ is strictly Henselian (and $p \neq 2$), we have that
\[
f=\left(u^{1/2} x\right)^2+\left(v^{1/2}y\right)^2+\left(w^{1/2}z\right)^2 + g'
\]
by extracting square roots of units (or zero). Hence, by a new choice of minimal generators of $\fran$, we obtain the desired result.
\end{proof}

\begin{remark}
\label{remark:splittinglemmaremark}
Over the complex numbers, Lemma~\ref{lem.SplittingLemma} is a weak version of what is known as the ``splitting lemma'' (or even the ``generalized Morse lemma''). See \cite[Theorem 11.1]{ArnoldGuseinZadeVarchenkoSingularitiesOfDifferentiableMaps} and \cite[Theorem 2.47]{GreuelLossenShustinIntroToSingularitiesAndDeformations} for further details; the key distinction above is that we do not know if the parameters can be chosen so that those in the quadratic form do not appear in the collection of higher order terms. In other words and in the notation of Lemma~\ref{lem.SplittingLemma}, it would be interesting to know if one can always find a generating set $x,y,z$ of $\n$ and an expression $f=\tilde{f}+g$ with $g\in \n^3$ so that if $g=\sum u_{i,j,k}x^iy^jz^k$ is an expression of $g$ in the completion $\widehat{S}$, with each $u_{i,j,k}$ either a unit or $0$, then
\begin{enumerate}
\item if $f=x^2$ then $u_{i,j,k}=0$ whenever $i\geq 1$;
\item if $f=x^2+y^2$ then $u_{i,j,k}=0$ whenever $i\geq 1$ or $j\geq 1$;
\item if $f=x^2+y^2+z^2$ then $g=0$.
\end{enumerate}
Such expressions are possible if the characteristic of $\kay$ exceeds $5$ and $f$ defines a rational double point, see Theorem~\ref{Main Theorem description of functions}. In general, one might also hope that the collection of higher order terms satisfies similar uniqueness properties as in the classical setting: if $f=\tilde{f}_1+g_1=\tilde{f}_2+g_2$ are two such expressions in different choices of parameters will it be the case that $g_1, g_2$ are analytically equivalent? We are grateful to the anonymous referee for making us aware of the ``splitting lemma'' references listed above as well as the interesting questions raised in comparison to Lemma~\ref{lem.SplittingLemma}.
\end{remark}

\begin{proposition} \label{prop.ThreeSquares}
Let $(S,\fran, \kay)$ be a $3$-dimensional regular strictly Henselian local ring with $\fran=(x,y,z)$ and $\Char \kay >2$.  Suppose $f\in S$ defines a rational double point and is of the form $f=x^2+y^2+z^2+g$ with $g \in \fran^3$. Then one can choose new minimal generators of $\fran$, say $\fran=\bigl(\tilde{x}, \tilde{y}, \tilde{z}\bigr)$, such that $f=\tilde{x}^2+\tilde{y}^2+\tilde{z}^2$. In other words, $S/(f)$ is of type $\textnormal{A}_1$.
\end{proposition}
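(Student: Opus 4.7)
The plan is to upgrade the approximate expression $f = x^2+y^2+z^2+g$ (with $g\in\fran^3$) to an exact one by iteratively completing the square in all three variables, a Morse-lemma-style argument. The essential ingredient is that $2$ is a unit in $S$, guaranteed by $\Char \kay \neq 2$.

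I would construct, by induction on $n$, sequences of generators $x_n,y_n,z_n$ of $\fran$ such that the remainder $g_n := f - (x_n^2+y_n^2+z_n^2)$ lies in $\fran^{n+3}$. Start with $x_0,y_0,z_0 := x,y,z$ and $g_0 = g$. Given $x_n,y_n,z_n$, since these generate $\fran$, write
\[
g_n = x_n\alpha + y_n\beta + z_n\gamma, \quad \alpha,\beta,\gamma \in \fran^{n+2},
\]
and set $x_{n+1} := x_n + \alpha/2$, $y_{n+1} := y_n + \beta/2$, $z_{n+1} := z_n + \gamma/2$. A direct expansion gives
\[
x_{n+1}^2 + y_{n+1}^2 + z_{n+1}^2 = (x_n^2+y_n^2+z_n^2) + (x_n\alpha+y_n\beta+z_n\gamma) + \tfrac{1}{4}(\alpha^2+\beta^2+\gamma^2) = f + \tfrac{1}{4}(\alpha^2+\beta^2+\gamma^2),
\]
so $g_{n+1} = -(\alpha^2+\beta^2+\gamma^2)/4 \in \fran^{2n+4} \subseteq \fran^{n+4}$, as desired. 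Since $x_{n+1} \equiv x_n \pmod{\fran^{n+2}}$ (and similarly for $y,z$), the sequences $(x_n),(y_n),(z_n)$ are $\fran$-adically Cauchy and, crucially, remain congruent to $x,y,z$ modulo $\fran^2$ throughout.

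Passing to the $\fran$-adic completion $\widehat S$, these sequences converge to elements $\tilde x, \tilde y, \tilde z$ satisfying $f = \tilde x^2 + \tilde y^2 + \tilde z^2$ exactly; by Nakayama they still generate the maximal ideal of $\widehat S$. In the context of \autoref{Main Theorem description of functions}, $S$ is already complete and we are done. More generally, since $S$ is excellent strictly Henselian, Artin approximation applied to the polynomial system $f(x,y,z) = X^2+Y^2+Z^2$ in unknowns $X,Y,Z$ (requiring $X\equiv x$, $Y\equiv y$, $Z\equiv z$ modulo $\fran^2$) produces a genuine solution in $S$, finishing the proof. The only non-routine point is this last descent; the inductive quadratic-convergence step itself is a standard completing-the-square computation, and the rational-double-point hypothesis is not needed beyond ensuring the initial form has rank three.
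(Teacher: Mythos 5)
Your proof is correct, but it takes a genuinely different route from the paper's. The paper stays entirely inside $S$ and finishes in finitely many steps: it expands $g$ into degree-$3$ monomials, absorbs every monomial divisible by $x^2$, $y^2$, or $z^2$ into the quadratic part as $ux^2+vy^2+wz^2$ and removes the units $u,v,w$ by extracting their square roots (this is where strict Henselianness and $\Char\kay>2$ enter), reducing to $f=x^2+y^2+z^2+\alpha xyz$; it then kills the $xyz$ term with the explicit identity $x^2+y^2+z^2+\alpha xyz=(1+\mu)(x+y)^2+\mu(x-y)^2+z^2$ where $\mu=4^{-1}(\alpha z-2)$ and both $\mu$ and $1+\mu$ are units, and absorbs those units the same way. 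Your argument instead runs a quadratically convergent completing-the-square iteration whose limit exists only in $\widehat S$, and then descends to $S$ by Artin--Popescu approximation (legitimate here, since the paper's standing hypothesis makes $S$ excellent and it is Henselian). The trade-off: your iteration never needs square roots of units, so the separably closed residue field plays no role until the very end, and the quadratic-convergence bookkeeping is uniform; but the descent invokes the approximation property, a far heavier tool than anything in the paper's proof, and is genuinely needed since $S$ is not assumed complete. Both arguments confirm your closing remark that the rational double point hypothesis is not actually used beyond the given shape of $f$.
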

\begin{proof}
Say first
\[
g=\sum_{i+j+k=3} \alpha_{ijk}x^iy^jz^k
\]
for some $\alpha_{ijk} \in S$. Then $f$ can be written as
\begin{align*}
f &=  x^2 + \alpha_{300}x^3 + \sum_{j+k=1} \alpha_{2jk}x^2 y^jz^k\\
&+y^2 + \alpha_{030}y^3 + \sum_{i+k=1} \alpha_{i2k}x^i y^2z^k\\
&+z^2 + \alpha_{003}z^3 + \sum_{i+j=1} \alpha_{ij2}x^i y^jz^2 \\
&+\alpha_{111}xyz
\end{align*}
Now, by factoring out the squares in the first three lines, we get that
\[
f= ux^2+vy^2+wz^2 + \alpha_{111}xyz
\]
where $u,v,w$ are units of $S$.\footnote{For instance, $u=1+\alpha_{300}x + \sum_{j+k=1}\alpha_{2jk}y^jz^k$.} Using that $S$ is strictly Henselian and $p\neq 2$, we can extract square roots of $u$, $v$, and $w$ to absorb them into the squares. Then, by declaring new generators of $\fran$, we may assume $g=\alpha xyz$ for some $\alpha \in S$.\footnote{Indeed, we may set the new generators of $\fran$ to be $u^{1/2} x$, $v^{1/2} y$, and $w^{1/2} z$. Thus, $\alpha=\alpha_{111}u^{-1/2}v^{-1/2}w^{-1/2}$.}

Next, we proceed under the assumption $g=\alpha xyz$ for some $\alpha \in S$. There are two cases depending on whether or not $\alpha \in \fran$. If $\alpha \in \fran$, we can just repeat the previous argument with a ``$g$'' that has no ``$\alpha_{111}$.'' Hence, this would lead to the case $g=0$ after a new choice of generators of $\fran$. Then, we may assume $\alpha$ is a unit, in which case:
\begin{equation} \label{eqn.squaretrick}
\begin{split}
f&=x^2+y^2+z^2+\alpha xyz\\
&=(x+y)^2+z^2+xy(\alpha z-2)\\
&= (x+y)^2+z^2+4^{-1}(\alpha z -2)\bigl((x+y)^2-(x-y)^2\bigr)\\
&= \bigl(1+4^{-1}(\alpha z -2)\bigr)(x+y)^2+4^{-1}(\alpha z -2)(x-y)^2 +z^2.
\end{split}
\end{equation}
Let $\mu= 4^{-1}(\alpha z -2)$. We remark that both $\mu$ and $1+\mu$ are units of $S$ (we use that $\Char \kay > 2$ to deduce that $1+\mu$ is a unit).

Observe that the maximal ideal of $S$ is minimally generated by $x+y,x-y,z$. Therefore, as before, we may assume that there are minimal generators $\tilde{x},\tilde{y},\tilde{z}$ of the maximal ideal of $S$ such that $f=\tilde{x}^2+\tilde{y}^2+\tilde{z}^2$. \end{proof}

\begin{proposition}
\label{prop.AnClassification}
Let $(S,\fran, \kay)$ be a $3$-dimensional regular strictly Henselian local ring with $\fran=(x,y,z)$ and $\Char \kay >2$.  Suppose $f\in S$ defines a rational double point and is of the form $f = x^2+y^2+g$ with $g \in \fran^3$. Then one can choose new minimal generators of $\fran$, say $\fran=\bigl(\tilde{x}, \tilde{y}, \tilde{z}\bigr)$, such that, up to a unit, $f=\tilde{x}^2+\tilde{y}^2+\tilde{z}^{n+1}$ with $n \geq 2$. In other words, $S/(f)$ is of type $\textnormal{A}_n$ for some $n \geq 2$.
\end{proposition}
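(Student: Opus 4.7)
The plan follows the template of \autoref{prop.ThreeSquares}, but with the rank-$2$ quadratic form $x^2 + y^2$ in place of the rank-$3$ form, so that a residual ``$z$-direction'' survives to furnish the $\tilde z^{n+1}$ term. Passing to the completion $\hat S$ (which does not affect the isomorphism type of $S/(f)$), the first step is to iteratively complete the squares in $x$ and $y$. Any expansion of the form $g = g_0 + xA + yB$ with $A, B \in \fran^2$ and $g_0$ involving neither $x$ nor $y$ (with respect to a chosen Cohen subring of $\hat S$) allows us to substitute $x \mapsto x + A/2$ and $y \mapsto y + B/2$, absorbing the cross terms at the price of a new error term $-(A^2+B^2)/4 \in \fran^4$. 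Since the corrections at each stage live in strictly increasing powers of $\fran$, the procedure converges $\fran$-adically, yielding minimal generators $\tilde x, \tilde y$ of $\fran$ and an element $H \in \fran^3$ with $f = \tilde x^2 + \tilde y^2 + H$ in which $H$ has no remaining $\tilde x$- or $\tilde y$-cross terms.

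Next, I would observe that $H \neq 0$: were $H = 0$, then $f = (\tilde x + i\tilde y)(\tilde x - i\tilde y)$ (with $i = \sqrt{-1} \in S$ available because $S$ is strictly Henselian and $\Char \kay > 2$), contradicting the normality of $R = S/(f)$. Since $\tilde x, \tilde y$ form a regular sequence, $V := S/(\tilde x, \tilde y)$ is a $1$-dimensional regular local ring, hence a DVR. Selecting a uniformizer $\bar z \in V$ and lifting, we obtain $\tilde z \in \fran$ making $\tilde x, \tilde y, \tilde z$ a minimal generating set of $\fran$. A further round of completing the square (absorbing the remaining $\tilde x H_x + \tilde y H_y$ pieces in $H - u\tilde z^m$, where $u$ lifts a unit in $V^\times$) combined with $\fran$-adic convergence then yields $f = \tilde x^2 + \tilde y^2 + u\tilde z^m$ for some unit $u \in S^\times$ and integer $m \geq 3$; the last inequality follows from $f \equiv \tilde x^2 + \tilde y^2 \pmod{\fran^3}$ (inherited from $g \in \fran^3$ and substitutions in $\fran^2$), forcing $u\tilde z^m \in \fran^3$. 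Setting $n := m - 1 \geq 2$ and extracting $u^{1/2} \in S^\times$ by Hensel's lemma (valid since $\Char \kay \neq 2$), the rescaling $\tilde x \mapsto u^{-1/2}\tilde x$ and $\tilde y \mapsto u^{-1/2}\tilde y$ finally gives $u^{-1} f = \tilde x^2 + \tilde y^2 + \tilde z^{n+1}$, as desired.

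The main obstacle I anticipate is making rigorous the notion of $H$ being ``independent of $\tilde x, \tilde y$''---equivalent to choosing a Cohen subring $V' \subset \hat S$ with $\hat S = V'[[\tilde x, \tilde y]]$ and $V'$ matching $V$ once $\tilde z$ is declared its uniformizer. In equal characteristic this is transparent because the coefficient ring of $\hat S$ is simply the residue field $\kay$. In mixed characteristic the coefficient ring is itself a DVR, so one must show a compatible Cohen structure exists for the regular sequence $\tilde x, \tilde y$, and verify that the completing-the-squares iterations preserve the corresponding decomposition. Provided this is arranged, the iterative procedure proceeds as in the classical Morse/splitting lemma, with $\fran$-adic completeness of $\hat S$ ensuring convergence of the formal sum of corrections.
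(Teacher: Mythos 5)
Your overall strategy---iterated completion of squares, $\fran$-adic convergence in $\widehat S$, and the observation that a non-terminating iteration would force $f=\bar x^2+\bar y^2$ and hence contradict normality---is exactly the strategy of the paper's proof. The genuine gap is the one you flag yourself and do not close: the decomposition $\widehat S = V'\llbracket \tilde x,\tilde y\rrbracket$ with $V'$ a complete DVR mapping isomorphically onto $V=\widehat S/(\tilde x,\tilde y)$ can fail to exist in mixed characteristic. Such a $V'$ is precisely a ring-theoretic section of $\widehat S\twoheadrightarrow V$, and when $V$ is ramified over $W(\kay)$ no section need exist. For instance, take $\widehat S=W(\kay)\llbracket u,v,t\rrbracket/(t^2-p-uv)$ and $(\tilde x,\tilde y)=(u,v)$, so $V=W(\kay)\llbracket t\rrbracket/(t^2-p)$. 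A section would produce $\tau=t+a\in\widehat S$ with $a\in(u,v)$ and $\tau^2=p=t^2-uv$, i.e.\ $a(2t+a)=-uv$; since $\widehat S$ is a UFD in which $u,v$ are primes not dividing $2t+a$, one gets $uv\mid a$ and then that $2t+a$ is a unit, a contradiction. Note that $f=u^2+v^2+t^{n+1}$ in this $\widehat S$ is already an honest $\textnormal{A}_n$ equation, yet there is no coefficient ring in which its ``$z$-part'' $t^{n+1}$ lives; so the condition ``$H$ involves neither $\tilde x$ nor $\tilde y$'' is not a well-defined property of an element of $\widehat S$, and the limit object $H$ of your first paragraph is not defined.

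The repair is essentially contained in your own second paragraph and is what the paper does: never ask for a coefficient ring, and only ever track one $z$-degree at a time. At stage $n$ one has $f=x_n^2+y_n^2+\delta_n z^{n+1}$ for a single element $\delta_n\in S$ with no independence requirement. If $\delta_n$ is a unit, stop and rescale; otherwise $\delta_n\in\fran=(x_n,y_n,z)$, so $\delta_nz^{n+1}=z^{n+1}(\lambda x_n+\mu y_n+\nu z)$, and completing the two squares absorbs the $x_n$- and $y_n$-terms at the cost of an error in $(z^{2n+2})\subset\fran^{n+2}$, yielding $f=x_{n+1}^2+y_{n+1}^2+\delta_{n+1}z^{n+2}$. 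Equivalently, in your language: only the image of the error in the DVR $S/(x_n,y_n)$ is needed, never a lift of that DVR into $S$. With this change your convergence and termination arguments go through verbatim (the sequences $\{x_n\},\{y_n\}$ are Cauchy because the corrections lie in increasing powers of $\fran$, and non-termination gives $f=\bar x^2+\bar y^2$, which is not normal since it factors as $(\bar x+i\bar y)(\bar x-i\bar y)$---this is exactly the paper's argument). One further small point: before entering this iteration one must dispose of the degree-three cross term $\alpha xyz$, which your generic ``$g=g_0+xA+yB$'' does cover but which the paper handles explicitly via the $(x\pm y)$-substitution of \autoref{prop.ThreeSquares}.
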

\begin{proof}

Following \autoref{prop.ThreeSquares}, we write
\[
f=x^2+y^2+\sum_{i+j+k=3} \alpha_{ijk}x^iy^jz^k
\]
for some $\alpha_{ijk} \in S$. Then by grouping together the terms with $x^2$ and $y^2$ factors we have that
\[
f=ux^2+vy^2+ z^2\bigl(\alpha_{102}x+\alpha_{012}y+\alpha_{003}z\bigr)+  \alpha_{111}xyz.
\]
Moreover, after absorbing the units $u,v$ into the squares by a new choice of generators, we have
\[
f=x^2+y^2 +z^2(\alpha_1 x + \alpha_2 y + \alpha_3 z) + \alpha_0 xyz
\]
for some $\alpha_i \in S $. By using the idea from \autoref{eqn.squaretrick}, we may absorb the term $\alpha_0 xyz$ into the sum of squares $x^2+y^2$ by choosing new generators of $\fran$. Thus, we may assume
\[
f=x^2+y^2 +z^2(\alpha x + \beta y + \gamma z)
\]
for some $\alpha, \beta, \gamma \in S$. By completing squares, we also have
\begin{align*}
f&=\left(x+\frac{\alpha}{2}z^2\right)^2+\left(y+\frac{\beta}{2}z^2\right)^2+\gamma z^3 -\left(\frac{\alpha^2}{4}+ \frac{\beta^2}{4}\right) z^4\\
&=\left(x+\frac{\alpha}{2}z^2\right)^2+\left(y+\frac{\beta}{2}z^2\right)^2+\delta z^3
\end{align*}
for some $\delta \in S$. Hence, by a new choice of generators of $\fran$, we may assume
\[
f=x^2+y^2+\delta z^3.
\]

If $\delta$ is a unit, then $S/(f)$ is of type $\textnormal{A}_2$. Indeed, we have that
\[
\delta^{-1}f=\delta^{-1}x^2+\delta^{-1}y^2+z^3
\]
and then we can let $x$ and $y$ to absorb the units $\delta^{-1}$ by taking their square root and choosing new generators.

If $\delta$ is not a unit however, write $\delta=\lambda x + \mu y +\nu z$ and complete squares once again to say
\[
f=x^2+y^2+\varepsilon z^4
\]
for some $\varepsilon \in S$. If $\varepsilon$ is a unit we then get an $\textnormal{A}_3$ equation (by considering $\varepsilon^{-1}f$ instead as before). Otherwise, we repeat all over again. This process will eventually stop yielding that, up to a choice of generators,
\begin{equation} \label{eqn.Sequence}
f=x^2+y^2 + z^{n+1}
\end{equation}
for some integer $n \geq 2$.

To see why the above described process stops, suppose by sake of contradiction that it does not.  We can also view this process inside the completion $\widehat{S} \supset S$.  Then we end up with a sequence of equations
\[
f = x_n^2+y_n^2+\delta_n z^{n+1} \quad{(n \geq 1)}
\]
where $\delta_n \in \fran$ for all $n$, and also
\[
x_{n+1} = x_n + \frac{\alpha_n}{2} z^n, \quad  y_{n+1} = y_n + \frac{\beta_n}{2} z^n
\]
for some $\alpha_n, \beta_n \in S$.

Hence, the sequences $\{x_n\}$ and $\{y_n\}$ are Cauchy and then converge, say to $\bar{x}$ and $\bar{y}$ respectively. Moreover, from $\autoref{eqn.Sequence}$, we see that the sequence $\bigl\{x_n^2 + y_n^2 \bigr\}$ converges to $f \in \widehat{S}$. Putting these two observations together we conclude that
\[
f = \bar{x}^2 + \bar{y}^2 \in \widehat{S}
\]
where actually $(\bar{x},\bar{y},z) = \fran \widehat{S}$. Then $\widehat{S}/(f)$ is not even normal and so neither is $S/(f)$,\footnote{Recall that we assume $R$, $S$ are excellent rings throughout the article.} which is a contradiction.
\end{proof}

The remaining singularities classifying rational double points will be defined by functions of the form $f=x^2+g$ with $g\in \fran^3$. We continue with a lemma which further refines the possible forms of the function $f$.

\begin{lemma}
\label{lem:Three possible forms}
Let $(S,\fran,\kay)$ be a $3$-dimensional regular local ring. Suppose that either $S$ is strictly Henselian with $\Char \kay >3$ or that $S$ is Henselian with algebraically closed residue field of prime characteristic $p>2$.  Suppose $f\in S$ defines a rational double point and is of the form $f=x^2+g$ with $g\in \fran^3$. Then there exists a possibly new choice of generators $x,y,z$ of the maximal ideal of $S$ such that $f$ is of one of the following forms:
\begin{enumerate}
\item $f=x^2+y^2z+z^3+h$ with $h\in (y,z)^4$;
\item $f=x^2+y^2z+h$ with $h\in (y,z)^4$;
\item $f=x^2+y^3+h$ with $h\in (y,z)^4$.
\end{enumerate}
\end{lemma}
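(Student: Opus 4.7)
The plan is to reduce $f = x^2 + g$ in two stages: first put it into the shape $x^2 + F(y,z)$ with $F \in (y,z)^3$ by completing the square in $x$, and then normalize the cubic part of $F$ via a linear change of $(y,z)$.

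For the first stage, I decompose $g$ by powers of $x$ as $g = x^2 \alpha + x\beta + \gamma$ with $\alpha \in \fran$, $\beta \in (y,z)^2$, and $\gamma \in (y,z)^3$. Since $\Char \kay \neq 2$ and $S$ is (strictly) Henselian, the unit $1 + \alpha \in S^*$ admits a square root, and rescaling $x$ by this square root absorbs that unit into $x^2$. Completing the square via $x \mapsto x + \beta/2$ --- valid because $\beta/2 \in (y,z)^2 \subset \fran^2$ --- then kills the $x$-linear term and produces $f = x^2 + F(y,z)$ with $F \in (y,z)^3$.

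For the second stage, I consider the image $\bar F_3 \in \kay[y,z]$ of the degree-$3$ part of $F$ after reducing coefficients modulo $\fran$. Under either hypothesis $\kay$ contains the cube and square roots needed: in the strictly Henselian case $\Char \kay \neq 3$ makes binary cubics separable over the separably closed $\kay$, while the algebraically-closed case handles $\Char \kay = 3$ directly. Hence $\bar F_3$ splits into three linear factors over $\kay$, and its $\gl_2(\kay)$-orbit is governed purely by the multiplicity pattern of its roots in $\mathbb{P}^1$. By the $3$-transitivity of $\pgl_2$, a linear change of $(y,z)$ lifted to $S$ brings $\bar F_3$ to a scalar multiple of $y^2z + z^3$, $y^2z$, or $y^3$, according to whether the roots are all distinct, form a double--simple pair, or all coincide. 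Absorbing the scalar by rescaling $y, z$ via cube or square roots of units (available in the Henselian $S$) gives the cubic part exactly as in (a), (b), or (c); any residual $x$-times-cubic-in-$(y,z)$ terms introduced by lifting coefficients are then removed by a second completion of the square in $x$, as $(x + Q/2)^2 - x^2 = xQ + Q^2/4$ with $Q^2/4 \in (y,z)^6 \subset (y,z)^4$ for $Q \in (y,z)^3$.

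The main obstacle is excluding the case $\bar F_3 = 0$, which would give $F \in (y,z)^4$ and match none of (a)--(c). Here I would invoke the characterization of rational surface singularities as those whose quadratic transforms remain normal (Lipman, Proposition~8.1, recalled in the preliminaries). After a preliminary linear change in $(y,z)$ making, say, the $y^4$-coefficient of $F$ a unit, blowing up $\fran$ in the $y$-chart --- substituting $x = yX$, $z = yZ$ --- produces a strict transform of the form $X^2 + y^2 G(y,Z)$ with $G(0,0) \in S^*$; since $\sqrt{-G}$ exists in the Henselian $S$ (as $\Char \kay \neq 2$), this factors as $(X - y\sqrt{-G})(X + y\sqrt{-G})$, exhibiting a reducible and hence non-normal strict transform, contradicting rationality. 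The care-demanding parts are ensuring this preliminary normalization can always be arranged and handling sub-cases where $F$ starts in degree strictly greater than $4$, which would force iterating the blow-up and running analogous chart computations until a non-normality witness appears.
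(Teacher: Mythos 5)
Your proposal follows essentially the same route as the paper's proof: complete the square in $x$ to reach $x^2+F(y,z)$, split the leading cubic form of $F$ over the residue field (exactly where the hypotheses on $\kay$ enter), sort the three cases by the root multiplicities via a linear change of $(y,z)$, and rule out the degenerate case $F\in(y,z)^4$ by showing a quadratic transform is non-normal, contradicting Lipman's Proposition~8.1. The one step where your execution falls short is that exclusion: your difference-of-squares factorization of the strict transform needs the degree-$4$ part of $F$ to be nonzero (so that a unit $y^4$-coefficient can be arranged), and you defer the higher-order cases to an unjustified iteration of blowups. No iteration is needed: in the $y$-chart one has $(y,z)S[x/y,z/y]=(y)$, so $F/y^2\in(y^2)$ whenever $F\in(y,z)^4$, and the strict transform $\tilde{x}^2+F/y^2$ then lies in $Q^2$ for the height-one prime $Q=(\tilde{x},y)$ of the transform; hence the blowup fails to be regular in codimension one, and is non-normal, regardless of the order of $F$. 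This is how the paper closes the case in one stroke. A second small slip: cube roots of units need not exist in the Henselian, residually algebraically closed, $p=3$ branch of the hypotheses (Hensel's lemma does not apply to $t^3-u$ there), so to normalize the scalar in front of the cubic you should instead multiply $f$ by the inverse of that unit and absorb its square root into $x$, which is why the downstream statements in the paper carry the qualifier ``up to a unit.''
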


\begin{proof}
We start by writing
\[
f=x^2+\sum_{i+j+k=3} \alpha_{ijk}x^iy^jz^k
\]
for some $\alpha_{ijk} \in S$.  Similar to the proof of \autoref{prop.AnClassification}, by grouping terms, we may write
\[
f=x^2+y^2 \cdot \alpha + z^2 \cdot \beta
\]
 where $\alpha, \beta \in \fran$.
 Say now $\alpha= \alpha_1 x + \alpha_2 y + \alpha_3 z$ and $\beta= \beta_1 x + \beta_2 y + \beta_3 z$. 
Next, by completing the squares, we can group all appearances of $x$ in $\alpha$ or $\beta$ in one single square. Thus, we may assume
\[
f=x^2 +  ay^3 + by^2z + cyz^2+dz^3
\]
where $a,b,c,d$ are units or belong to the ideal $(y,z)$. Moreover, at least one of these must be a unit. Else, $f$ is of the form $f=x^2+g$ where $g\in \n^4$. However, one readily verifies that the blowup of $S/(f)$ at the closed point produces a non-normal scheme,\footnote{Indeed, consider the chart
$T = S[\tld{x}, \tld{y}, \tld{z}] = S[x/y, y, y/z]$ where the strict transform of $V(f)$ is defined by $\tld f = \tld x^2 + g/y^2$ where we observe that $g/y^2 \in (y^2)_{S_y}$ (since $g \in \n^4$).  Now, $(\tld x, y)$ generates a height 2 prime in $T$ and so a height 1 prime $Q$ in $T/(\tld f)$.  However, $\tld f \in Q^2$, and so $T/(\tld f)$ is not regular in codimension 1.} which contradicts \cite[Proposition~8.1]{LipmanRationalSingularities}. Furthermore, we may assume all $a,b,c,d$ are units or zero if instead we write
\[
f=x^2 +  ay^3 + by^2z + cyz^2+dz^3 + h
\]
where $h \in (y,z)^4$. Notice that if $a$ and $d$ are zero, then the above ``cubic form'' factors as
\[
ay^3 + by^2z + cyz^2+dz^3 = yz(by+cz)
\]
which is a product of three ``linear forms.'' Otherwise, suppose without lost of generality that $a$ is a unit, say $a=1$. We then consider the \emph{cubic} polynomial $p(t)=t^3+bt^2+ct+d \in \kay[t]$. Recall that we are assuming $\kay$ is either separably closed of characteristic $p>3$ or that $\kay$ is algebraically closed of characteristic $p>2$. In either case, $p(t)$ factors as a product of linear factors in $\kay[t]$; see \cite[Page 184]{RomanFieldTheory}. 
Therefore, at the residue field level, $p(t)$ admits a factorization
\[
p(t) = (t-\lambda_1)(t-\lambda_2)(t-\lambda_3) \in \kay[t].
\]
Lifting the $\lambda_i$ back to $S$ (in some arbitrary way), we obtain that
\[
ay^3 + by^2z + cyz^2+dz^3  - (y-\lambda_1 z)(y-\lambda_2 z) (y-\lambda_2 z) \in   (y,z)^3\cdot(x,y,z).
\]
In other words, by a new choice of ``$h\in (y,z)^4$'' if necessary, we may assume
\[
f=x^2 + \ell_1 \ell_2 \ell_3 + \gamma x+h
\]
where the $\ell_i$ are  ``linear forms,'' $\gamma\in (y,z)^3$, and $h\in (y,z)^4$. Observe that $x^2+\gamma x=(x+2^{-1}\gamma)^2-4^{-1}\gamma^4$. Therefore, we may replace $x$ by $x+2^{-1}\gamma$, $h$ by $h-4^{-1}\gamma^4$, and assume further that
\[
f=x^2 + \ell_1 \ell_2 \ell_3 +h
\]
with $h\in (y,z)^4$. There are three cases to consider.

\noindent \underline{Case 1}: $\ell_1, \ell_2, \ell_2$ define different lines.\footnote{Meaning that these are different lines in the $\kay$-plane generated by $y,z$ in the cotangent space $\fran/\fran^2$.}

Then there exists a $\kay$-basis for $\fran/\fran^2$, say $x, \tilde{y}, \tilde{z}$, such that $\ell_1 = \tilde{z}$, $\ell_2 = \tilde{z}+i\tilde{y}$, and $\ell_3 = \tilde{z}-i\tilde{y}$ in that plane.\footnote{Indeed, we may assume $\ell_3 = a_1 \ell_1 + a_2\ell_2 $ in the $\kay$-plane spanned by $y,z$ in $\fran/\fran^2$, this for some $0 \neq a_i \in \kay$. Then by relabeling $\ell_i$ by $a_i \ell_i$, we may assume $\ell_3 = \ell_1 + \ell_2$, or even better $\ell_3 = \frac{1}{2} \ell_1 + \frac{1}{2}\ell_2$. Then we may choose our coordinates for this plane, say $\tilde{y}, \tilde{z}$, so that $\ell_1$ and $\ell_2$ are the orthogonal lines $\tilde{z}+i\tilde{y}$ and $\tilde{z}-i\tilde{y}$ respectively. Then we would have $\ell_3 = \tilde{z}$.} Now, when this is lifted back to $S$, we simply obtain that $\ell_1 \ell_2 \ell_3 - \tilde{z}\bigl(\tilde{z}+i\tilde{y}\bigr)(\tilde{z}-i\tilde{y}) \in \fran^4$. Thus, we may assume that
\[
f=x^2+y^2z+z^3+h
\]
and $h \in (y,z)^4$.

\noindent \underline{Case 2}: $\ell_1 \neq \ell_2 =\ell_3$.

Then we may assume $\ell_2 = \ell_3 = y$ and $\ell_1=z$. This gives that
\[
f=x^2+y^2z+ h
\]
with $h\in (y,z)^4$.

\noindent \underline{Case 3}: $\ell_1,\ell_2,\ell_3$ are the same.

Then we may assume $\ell_1=y$ and $f$ is of the form $f=x^2+y^3+h$ with $h\in (y,z)^4$.
\end{proof}

\begin{proposition}
\label{prop:D4 classification} Let $(S,\fran,\kay)$ be a $3$-dimensional complete regular local ring of with separably closed residue field of characteristic $p > 2$. Suppose $f\in S$ defines a rational double point and, up to a unit, is of the form $f=x^2+y^2z+z^3+h$ with $h\in (y,z)^4$.  Then, there exists a choice of minimal generators $\tilde{x},\tilde{y}, \tilde{z}$ of the maximal ideal of $S$ such that $f=\tilde{x}^2+\tilde{y}^2\tilde{z}+\tilde{z}^3$, that is $R=S/(f)$ is of type $\textnormal{D}_4$.
\end{proposition}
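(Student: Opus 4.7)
The plan is to absorb the perturbation $h$ by an iterative change of coordinates in $y$ and $z$ alone, leaving $x$ untouched. After replacing $f$ by $u^{-1}f$ for a suitable unit $u$, I may assume $f = x^2 + y^2z + z^3 + h$ exactly, with $h \in (y,z)^4$. The heart of the argument will be that the ``model'' cubic $y^2z + z^3$ has partial derivatives $2yz$ and $y^2 + 3z^2$ generating an ideal which, using that $2$ and $3$ are units in $S$ (forcing $p \neq 2,3$), contains $(y,z)^3$: indeed $y^3 = y(y^2+3z^2) - 3z(yz)$ and $3z^3 = z(y^2+3z^2) - y(yz)$, while $y^2z, yz^2 \in (yz)$. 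A short bookkeeping upgrades this to the stronger inclusion $(y,z)^N \subset (y,z)^{N-2}\cdot (yz,\, y^2+3z^2)$ for every $N \geq 4$.

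With this in hand the inductive step becomes immediate. Suppose that in the current generators $(x, y_k, z_k)$ of $\fran$ we have $f = x^2 + y_k^2 z_k + z_k^3 + h_k$ with $h_k \in \fran^{N_k}$ for some $N_k \geq 4$. I would write $h_k = a_k(y_k z_k) + b_k(y_k^2 + 3z_k^2)$ with $a_k, b_k \in \fran^{N_k - 2}$, and set $y_{k+1} := y_k + \tfrac{1}{2}a_k$ and $z_{k+1} := z_k + b_k$. Expanding $y_{k+1}^2 z_{k+1} + z_{k+1}^3$, the terms linear in $(a_k, b_k)$ reassemble exactly into $a_k y_k z_k + b_k(y_k^2 + 3z_k^2) = h_k$, while the ``error'' $E_k$ collecting the remaining cross terms is at least quadratic in $(a_k, b_k)$ and therefore lies in $\fran^{2N_k - 3}$. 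Thus $f = x^2 + y_{k+1}^2 z_{k+1} + z_{k+1}^3 + h_{k+1}$ with $h_{k+1} := -E_k \in \fran^{N_{k+1}}$ and $N_{k+1} = 2N_k - 3$.

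Starting from $N_0 = 4$ the exponents tend to infinity, and since the successive differences $y_{k+1} - y_k, z_{k+1} - z_k$ lie in $\fran^{N_k - 2}$, the sequences $\{y_k\}$ and $\{z_k\}$ are Cauchy in $S$. Completeness of $S$ produces limits $\tilde{y}, \tilde{z} \in S$, and passing to the limit gives $f = x^2 + \tilde{y}^2 \tilde{z} + \tilde{z}^3$. Because every increment sits in $\fran^2$, we have $\tilde{y} \equiv y$ and $\tilde{z} \equiv z \pmod{\fran^2}$, so $\{x, \tilde{y}, \tilde{z}\}$ is still a minimal generating set of $\fran$. I expect the only delicate point to be the ideal inclusion $(y,z)^N \subset (y,z)^{N-2}(yz, y^2 + 3z^2)$ at the start (this is really where $p \neq 2,3$ enters, and is precisely the finite-determinacy input for $\mathrm{D}_4$); once it is established, everything else is a standard Cauchy-sequence argument inside the complete ring $S$.
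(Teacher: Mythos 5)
Your argument takes a genuinely different route from the paper's. You absorb the entire perturbation $h$ in one stroke per step by expressing it in the Jacobian ideal $(yz,\,y^2+3z^2)$ of the model cubic $y^2z+z^3$ and making a simultaneous Tschirnhaus substitution in $y$ and $z$; the paper instead first reduces $h$ to a single obstruction term $\alpha y^4$ (absorbing $y^3z, y^2z^2, yz^3, z^4$ into unit multiples of $y^2z$ and $z^3$ and rescaling by square roots of units), and then iterates the one-variable substitution $z\mapsto z+\alpha y^2$ together with a rescaling $y\mapsto u^{1/2}y$, tracking exponents $e_{n+1}=3(e_n-2)$. Your version is the standard finite-determinacy argument: the inclusion $(y,z)^3\subseteq (y,z)\cdot(yz,\,y^2+3z^2)$ is correct when $2$ and $3$ are units, the error estimate $E_k\in (y,z)^{2N_k-3}$ with $N_{k+1}=2N_k-3$ is right (the quadratic terms in $(a_k,b_k)$ each carry an extra factor of $y_k$ or $z_k$), the ideals $(y_k,z_k)$ are all equal by Nakayama since the increments lie in $(y,z)^{2}$, and the Cauchy-sequence limit in the complete ring $S$ is legitimate. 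It is arguably cleaner and treats all of $(y,z)^4$ symmetrically.

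There is, however, a gap against the statement as written: the proposition assumes only $p>2$, so residual characteristic $3$ is allowed, and there your key inclusion fails. Your own derivation produces $3z^3 = z(y^2+3z^2)-y(yz)$, so extracting $z^3$ requires dividing by $3$. When $3\in\fran$ (e.g.\ $S=W(\kay)\llbracket y,z\rrbracket$ with $p=3$ and $x,y,z$ a suitable choice of generators of $\fran=(3,y,z)$), reducing modulo $3$ turns $(yz,\,y^2+3z^2)$ into $y\cdot(y,z)$, every element of which is divisible by $y$; since $z^4$ is not, the decomposition $h=a(yz)+b(y^2+3z^2)$ is impossible already for $h=z^4$, and the induction cannot start. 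The paper's proof sidesteps this entirely because it never divides by $3$: the substitution $z\mapsto z+\alpha y^2$ produces the coefficient $u_1=1-3\alpha z_1+3\alpha^2y^2$, which is a unit in any residual characteristic, and the only roots extracted are square roots of units (needing only $p\neq 2$). So your argument proves the proposition for $p>3$ --- which is all that the paper's main theorems (stated for $p>5$) require --- but to recover the stated $p>2$ you would need to handle the pure powers of $z$ in $h$ separately, for instance by first absorbing them into a unit multiple of $z^3$ as the paper does.
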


\begin{proof}
Observe that $h$ is an $S$-linear combination of $y^4$, $y^3z$, $y^2z^2$, $yz^3$, and $z^4$. Then,
\[
f=x^2+uy^2z+vz^3+\alpha y^4
\]
where $u,v$ are units and $\alpha \in S$. Using $y^2$ to absorb the unit $u$, and then multiplying by $v^{-1}$ and using $x^2$, $y^2$ to absorb $v^{-1}$, we may assume that, after replacing $f$ by a unit multiple of $f$, we have
\[
f=x^2+y^2z+z^3+\alpha y^4.
\]
If $\alpha = 0$, we are done. Otherwise, notice we may express $f$ as
\[
f=x^2+y^2\big(z+\alpha y^2\big) + z^3.
\]
If we let $z_1 = z+\alpha y^2$, we then have
\begin{align*}
f=x^2+y^2z_1 + \bigl(z_1-\alpha y^2\bigr)^3 &= x^2+y^2z_1 +z_1^3-3\alpha z_1^2 y^2+3\alpha^2z_1y^4 - \alpha^3y^6\\
&=x^2+u_1y^2z_1 + z_1^3 - \alpha^3 y^6\\
&=x^2+y_1^2z_1 + z_1^3 + \alpha_1 y_1^6,
\end{align*}
where $u_1 = 1 - 3 \alpha z_1 + 3 \alpha^2 y^2$ is a unit,
$y_1 = u_1^{1/2}y$ where we choose $u_1^{1/2} \equiv_{\fran} 1$, and $\alpha_1 = -\alpha^3 u_1^{-3}$. Since $\alpha_1 \neq 0$, we let $z_2 = z_1+\alpha_1 y_1^4$ and write
\begin{align*}
f=x^2+y_1^2z_2 + \bigl(z_2-\alpha_1 y_1^4\bigr)^3 &= x^2+y_1^2z_2 +z_2^3-3\alpha_1 z_2^2 y_1^4+3\alpha_1^2z_2y_1^8 - \alpha_1^3y_1^{12}\\
&=x^2+u_2y_1^2z_2 + z_2^3 - \alpha_1^3 y_1^{12}\\
&=x^2+y_2^2z_2 + z_2^3 + \alpha_2 y_2^{12}
\end{align*}
where $u_2 = 1 - 3 \alpha_1 z_2 y_1^2 + 3 \alpha_1^2 y_1^6$ is a unit,
$y_2 = u_2^{1/2}y_1$ where we choose $u_2^{1/2} \equiv_{\fran} 1$, and $\alpha_2 = -\alpha_1^3 u_2^{-3}$.

Note this process can be repeated indefinitely yielding sequences $\{z_n\}$, $\{y_n\}$, $\{u_n\}$ and $\{\alpha_n\}$, as well as equations
\begin{equation} \label{sequenceEQNS}
f=x^2+y_n^2 z_n +z_n^3 + \alpha_n y_n^{e_n}
\end{equation}
where the exponents $e_n$ are strictly increasing: indeed, one readily checks that they satisfy the recursive formula $e_{n+1}=3(e_n -2)$ with $e_0=4$. Certainly, $\alpha_n$ is never zero. However, the sequence of equations \autoref{sequenceEQNS} implies that the sequence $\bigl\{x^2+y_n^2z_n+z_n^3\bigr\}$ converges to $f$. On the other hand, we notice that $z_{n+1}=z_n+\alpha_n y_n^{e_n-3}$. Therefore, the sequence $\{z_n\}$ is Cauchy and so convergent, denote its limit by $\bar{z}$.

We also claim the sequence $\{y_n\}$ is Cauchy. To show this, we first write:
\[
u_n = 1 - 3 \alpha_{n-1} z_n y_{n-1}^{e_{n-1}-4}   + 3 \alpha_{n-1}^2 y_{n-1}^{2(e_{n-1}-3)},
\]
whereby the units $u_n$ are so that
\[
u_{n+1}-1 \in \bigl(y_{n}^{e_n-4} \bigr).
\]
Hence, from the relation $y_{n+1}^2=u_{n+1}y_n^2$, we have
\[
(y_{n+1}-y_n)(y_{n+1}+y_n) = y^2_{n+1}-y^2_n = (u_{n+1} - 1)y_{n}^2 \in \bigl(y_{n}^{e_n-2}\bigr) \subset \fran^{e_n-2}.
\]
Next, notice that, since we chose $u_{n+1}^{1/2} \equiv_{\fran} 1$, we have that $u_{n+1}^{1/2}+1$ is a unit since $\Char \kay >2$.
Nonetheless, we note that by induction on $n$, the sum $y_{n+1}+y_n = (u_{n+1}^{1/2} + 1)y_n$ belongs to $\fran \smallsetminus \fran^2$.
Consequently, $y_{n+1}-y_n \in \fran^{e_n-2}$. Hence, the sequence $\{y_n\}$ is Cauchy. Let $\bar{y}$ be the limit of $\{y_n\}$.

By taking limits, we then obtain:
\[
f=x^2 + \bar{y}^2 \bar{z}+\bar{z}^3
\]
which is an equation of type $\textnormal{D}_4$, as desired.
\end{proof}

\begin{proposition}
\label{prop:Dn classification} Let $(S,\fran,\kay)$ be a $3$-dimensional complete regular local ring with separably closed residue field of characteristic $p > 2$. Suppose $f\in S$ defines a rational double point and is of the form $f=x^2+y^2z+h$ with $h\in (y,z)^4$.  Then, there exists a choice of minimal generators $\tilde{x},\tilde{y}, \tilde{z}$ of the maximal ideal of $S$ and an integer $n\geq 5$ such that $f=\tilde{x}^2+\tilde{y}^2\tilde{z}+\tilde{z}^{n-1}$, that is $R=S/(f)$ is of type $\textnormal{D}_n$ for some $n\geq 5$.
\end{proposition}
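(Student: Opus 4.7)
The plan is to mimic the Newton-style iteration used in the proof of \autoref{prop:D4 classification}, adapted to the absence of a $z^3$-term in the initial form. I will iteratively absorb every non-pure-$z$ monomial of $h$ into either the $y^2z$ piece (by adjusting $y$ and $z$) or the $x^2$ piece (by adjusting $x$), until what remains is a pure power $z^{n-1}$, whose coefficient I will then rescale to $1$.

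First, decompose $h = y^2\psi(x,y,z) + yA(x,z) + B(x,z)$ with $\psi \in (y,z)^2$, $A \in (z^3)$, and $B \in (z^4)$. The substitution $z' = z+\psi$ absorbs $y^2\psi$ into $y^2z$, and completing the square in $y$ via $y' = y + A/(2z)$ (valid since $A/(2z) \in (z^2) \subset \fran^2$) absorbs the $y$-linear term, producing $f = x^2 + (y')^2 z' + B'(x,z') + E_1$ with $B' \in (z')^4$ depending only on $x, z'$, and a higher-order error $E_1$. I then iterate: at stage $m$ one has
$$
f = x^2 + y_m^2 z_m + C_m(x,z_m) + E_m,
$$
with $C_m$ a polynomial in $x$ and $z_m$ lying in $(z_m)^4$, and $E_m \in (y_m,z_m)^4 \cap \fran^{m+4}$. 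Splitting the leading degree-$(m{+}4)$ piece of $E_m$ by its $y_m$-degree and choosing corrections $y_{m+1}-y_m,\, z_{m+1}-z_m \in \fran^{m+2}$ absorbs the $y_m$-containing terms into $y_{m+1}^2 z_{m+1}$, while the $y_m$-free part is pushed into $C_{m+1}$. Since $S$ is complete, the sequences $\{y_m\}, \{z_m\}, \{C_m\}$ are Cauchy and converge to $\bar y, \bar z \in \fran$ (still a minimal system of generators together with $x$) and some $\bar C(x,\bar z) \in (\bar z^4)$; as $E_m \to 0$, the limit reads $f = x^2 + \bar y^2 \bar z + \bar C(x,\bar z)$.

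Next, I eliminate the $x$-dependence of $\bar C$ by a second iteration. Expand $\bar C(x,\bar z) = \sum_{i \geq 0} c_i(\bar z) x^i$ with every $c_i \in (\bar z^4) \subset \fran$; then $U := 1 + c_2 + c_3 x + \cdots$ is a unit of $S$, so $U^{1/2} \in S$ exists by Hensel's lemma (using $p>2$), and the substitution $\tilde x := U^{1/2}(x + c_1/(2U))$ pushes the remaining $x$-dependence to strictly higher $\fran$-order. Iterating by the same convergence argument, in the limit I obtain $f = \tilde x^2 + \bar y^2 \bar z + G(\bar z)$ with $G$ a power series in $\bar z$ alone. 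If $G = 0$, then $f = \tilde x^2 + \bar y^2 \bar z$ has partials vanishing simultaneously along $\{\tilde x = \bar y = 0\}$, a $1$-dimensional locus, so $S/(f)$ fails $R_1$ and thus fails to be normal, contradicting that $R = S/(f)$ is a rational double point. Hence $G \neq 0$ and $G(\bar z) = u\,\bar z^{n-1}$ for some unit $u \in S$ and some integer $n \geq 5$. Finally, I choose units $v, w \in S$ with $v^{n-1} = u$ and $w^2 v = 1$, extracted via Hensel's lemma (using $\kay$ separably closed and $p > 2$), and set $\tilde z := v\bar z$, $\tilde y := w\bar y$: these are valid minimal generators and satisfy $f = \tilde x^2 + \tilde y^2 \tilde z + \tilde z^{n-1}$, as desired.

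The principal technical obstacle will be the bookkeeping in the first iteration: showing that at each stage $E_m$ genuinely lies in $(y_m, z_m)^4$, not merely in $\fran^{m+4}$, so that the decomposition by $y_m$-degree is available at the next stage and the iteration converges to $x^2 + \bar y^2 \bar z + \bar C(x,\bar z)$ rather than stalling. A secondary subtlety is the final unit absorption when $p \mid n-1$, where the $(n{-}1)$-th root of $u$ is only partially separable over $\kay$; handling this will likely require either arguing that the iterative procedure naturally produces coefficient $1$ on $\bar z^{n-1}$ (as in the $D_4$ case, where the $z^3$-coefficient is preserved throughout the iteration), or folding the remaining unit into a simultaneous modification of $\tilde y$ and $\tilde z$ chosen so as to avoid explicit inseparable root-extraction.
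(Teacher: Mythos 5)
Your overall strategy is the same as the paper's: absorb the mixed terms of $h$ into $y^2z$ and $x^2$ by successive changes of generators, use completeness of $S$ to pass to the limit, and rule out indefinite continuation by observing that $x^2+\bar y^2\bar z$ is not normal. You are in fact more explicit than the paper about eliminating the $x$-dependence of the residual coefficient (the paper silently writes the non-unit coefficient $\gamma_k$ as $\alpha y+\beta z$, omitting a possible $x$-component, which it absorbs elsewhere by completing the square in $x$). However, there is a genuine gap at the final step. The inference ``$G\neq 0$ and $G$ is a power series in $\bar z$ alone, hence $G=u\bar z^{\,n-1}$ for a unit $u$'' is an equicharacteristic argument: it presumes the coefficients of $G$ live in a field. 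In mixed characteristic $S$ has no coefficient field; any coefficient ring contains $p$ as a non-unit, and since $p\in\fran$ it is itself of the form $\lambda x+\mu y+\nu z$ modulo $\fran^2$. So after your two iterations the ``pure $\bar z$'' remainder can perfectly well have the form $p\,\bar z^{4}+\cdots$, whose leading coefficient is not a unit and which still secretly depends on $x$ and $y$ through $p$. Indeed the notion ``power series in $\bar z$ alone'' is not even well defined in $S$, so the decompositions $h=y^2\psi+yA(x,z)+B(x,z)$ and $\bar C=\sum c_i(\bar z)x^i$ must be replaced by ideal-membership statements. The repair is exactly the paper's dichotomy: at each finite stage the coefficient $\gamma$ of the pure power $z^{k}$ is either a unit, in which case one stops with a $\mathrm{D}_{k+1}$ equation, or it lies in $\fran=(x,y,z)$, in which case one writes $\gamma=\lambda x+\mu y+\nu z$ and absorbs the three pieces into $x^2$, $y^2z$, and $\nu z^{k+1}$ respectively, raising the order by one; non-termination forces $f=x^2+\bar y^2 z$ in the limit, contradicting normality.

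Two smaller points. First, your worry about extracting an $(n-1)$-st root of $u$ when $p\mid n-1$ is resolved not by either of your two suggested workarounds but by recalling that the classification is only up to a unit multiple of $f$ (as in Theorem~B and in the paper's own proof, which passes to $\gamma_1^{-1}f$): replace $f$ by $u^{-1}f$ and absorb $u^{-1/2}$ into $\tilde x$ and $\bar y$, so only square roots are needed, which exist by Hensel's lemma since $p>2$. Second, your ``principal technical obstacle'' (that $E_m$ stays in $(y_m,z_m)^4$) is manageable because the corrections $y_{m+1}-y_m$ and $z_{m+1}-z_m$ lie in $(y,z)^2$, so the ideal $(y_m,z_m)$ is literally constant along the iteration; but this bookkeeping is genuinely the content of the paper's Claim that one may take $\alpha=0$, and would need to be written out.
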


\begin{proof}
As before, we note that $h$ is an $S$-linear combination of $y^4$, $y^3z$, $y^2z^2$, $yz^3$, and $z^4$. Then, by the above arguments and simplifications, we may assume
\[
f=x^2+y^2z+\alpha y^4+ \beta z^3
\]
for some $\alpha \in S$ and $\beta \in (y,z)$. Next, we may use an identical argument to the one in \autoref{prop:D4 classification} (where we had $\beta=1$) to show that we may assume $\alpha=0$. Since the argument is essentially the same, we isolate it in the following claim yet provide it for sake of completeness (pointing out the differences).
\begin{claim}
We may assume that $\alpha=0$.
\end{claim}
\begin{proof}[Proof of claim]
Indeed, we set $z_0 \coloneqq z$, $y_0\coloneqq y$, $\alpha_0\coloneqq \alpha$, $u_0\coloneqq 1$, and $e_0 \coloneqq 4$. Then, we define sequences $\{e_n\},\{z_n\}$, $\{y_n\}$, $\{\alpha_n\}$, and $\{u_n\}$ recursively as follows. First, we define $e_{n+1} \coloneqq 3(e_n-2)$ (which is a strictly inceasing sequence of even positive integers), $z_{n+1}\coloneqq z_n+\alpha_n y_n^{e_n-2}$. Next, we define the units,
\[
 u_{n+1} \coloneqq 1-3\beta \alpha_{n-1}z_ny_{n-1}^{e_{n-1}-4}+3\beta \alpha_{n-1}^2y_{n-1}^{2(e_{n-1}-3)}.
\]
Finally, we set  $ y_{n+1} \coloneqq u_{n+1}^{1/2}y_{n}$ where $u_{n+1}^{1/2} \equiv 1 \bmod \fran$, and $\alpha_{n+1}=-\beta \alpha_n^{3}u_{n+1}^{-e_{n+1}/2}$.

With those definitions in place, we have:
\[
f=x^2+y_n^2z_n+\beta z_n^3+\alpha_n y^{e_n}_n
\]
for all $n\geq 0$.

Of course, this process recovers the sequences in the proof of \autoref{prop:D4 classification} by specializing to $\beta=1$. Just as in the proof of \autoref{prop:D4 classification}, we have that both $\{z_n\}$ and $\{y_n\}$ (as well as $\{u_n\}$) are Cauchy sequences. By taking limits, we obtain the desired statement (observing $\beta$ stayed unchanged).
\end{proof}

Assuming $\alpha=0$, and writing $\beta = \alpha_1 y + \beta_1 z$, we have
\begin{align*}
f=x^2+y^2  z +  \alpha_1 y z^3 + \beta_1 z^4
&= x^2+ \bigl(y^2  + \alpha_1 y z^2\bigr) z + \beta_1 z^4 \\
&=x^2+\left(y  + \frac{\alpha_1}{2} z^2\right)^2 z+ \gamma_1 z^4
\end{align*}
where in the last step we completed the square and for this we may need to add an extra multiple of $z^5$ that was absorbed by the term $\beta_1 z^4$ under a new coefficient $\gamma_1$. In conclusion, we may assume
\[
f=x^2+y^2z+\gamma_1 z^4
\]
for some $\gamma_1 \in S$. If $\gamma_1$ is a unit then $f$, or more precisely $\gamma_1^{-1}f$ (after absorbing $\gamma^{-1/2}$ into $x$ and $y$),
gives an equation of type $\textnormal{D}_5$. Otherwise, we write $\gamma_1 = \alpha_2 y + \beta_2 z$ and we obtain
\begin{align*}
f=x^2+y^2  z + \alpha_2 y z^4 + \beta_2 z^5
&= x^2+\bigl(y^2  + \alpha_2 y z^3\bigr) z + \beta_1 z^5 \\
&=x^2+\left(y  + \frac{\alpha_2}{2} z^3\right)^2 z + \gamma_2 z^5\\
&=x^2+y_1^2z+\gamma_2 z^5
\end{align*}
for $y_1=y  + (\alpha_2/2) z^3$. If $\gamma_2$ is a unit, we end up with a singularity of type $\textnormal{D}_6$. Otherwise, we repeat this process until we get an equation of type $\textnormal{D}_n$ for $n \geq 5$. We remark this process must stop because else we would get a Cauchy sequence $\{y_n\}$, whose limit is say $\bar{y}$, so that the sequence $\{x^2+y_n^2 z\}$ converges to $f$. Then, we would have $(x, \bar{y}, z) = \fran$ such that
\[
f=x^2+\bar{y}^2z
\]
this, however, is not a rational singularity. Indeed, this is not even normal, for $x/\bar{y} \in \Frac R$ is integral over $R$ but not in $R$.
\end{proof}

\begin{proposition}
\label{prop:E6,E7,E8}
Let $(S,\fran,\kay)$ be a $3$-dimensional strictly Henselian regular local ring  with residue field of characteristic $p > 5$.  Suppose that $f\in S$ defines a rational double point and, up to a unit, is of the form $f=x^2+y^3+h$ with $h\in (y,z)^4$. Then one of the following cases occurs.

\begin{enumerate}
\item There exists a choice of minimal generators $\tilde{x}, \tilde{y},\tilde{z}$ of the maximal ideal of $S$ such that $f$ is of one of the following forms:
\begin{enumerate}
\item $f=\tilde{x}^2+\tilde{y}^3+\tilde{z}^4$;
\item $f=\tilde{x}^2+\tilde{y}^3+\tilde{z}^5$.
\end{enumerate}
\item There exists a choice of minimal generators $\tilde{x}, \tilde{y},\tilde{z}$ of the maximal ideal of $\widehat{S}$ such that in $\widehat{S}$, $f$ is of the form $f=\tilde{x}^2+\tilde{y}^3+\tilde{y}\tilde{z}^3$.
\end{enumerate}
Therefore $R=S/(f)$ is of type $\textnormal{E}_6, \textnormal{E}_7$, or $\textnormal{E}_8$.
\end{proposition}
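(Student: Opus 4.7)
The plan is to normalize $f$ to a Weierstrass-type form $\bar x^2 + \bar y^3 + B(\bar z)\bar y + C(\bar z)$ in $\widehat S$, classify by the leading $\bar z$-adic orders of $B$ and $C$, and rule out the remaining orders by the non-normality-of-strict-transform argument from \autoref{lem:Three possible forms}. For $\textnormal{E}_6$ and $\textnormal{E}_8$, descent from $\widehat S$ to $S$ is provided by Artin approximation, giving statement (1); for $\textnormal{E}_7$, the normal form genuinely requires the completion, giving statement (2).

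Working in $\widehat S$, I would first use that $2, 3 \in \kay^\times$ (since $p > 5 > 3$) to bring $f$ into
\[
f = \bar x^2 + \bar y^3 + B(\bar z)\,\bar y + C(\bar z), \qquad B \in (\bar z^3),\ C \in (\bar z^4),
\]
with $B, C$ depending only on $\bar z$ (viewed inside the Cohen subring $V[[\bar z]] \subset \widehat S$). This proceeds by expanding $f$ as a series in $\bar y$, absorbing all $\bar y^{\ge 4}$ contributions into $\bar y^3$ via a cube-root extraction of the resulting unit, completing the cube in $\bar y$ to kill the $\bar y^2$-coefficient, and iteratively completing the square in $\bar x$ (using strict Henselianity to extract the required unit square roots) to absorb residual $\bar x$-cross-terms. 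The order conditions $B \in (\bar z^3)$, $C \in (\bar z^4)$ follow from $h \in (\bar y, \bar z)^4$.

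Set $\beta := v_{\bar z}(B) \ge 3$ and $\gamma := v_{\bar z}(C) \ge 4$. If $\gamma = 4$, I would extract a fourth root of the unit leading coefficient of $C$ to normalize $C = \bar z^4$, then iteratively kill $B$ via substitutions $\bar z \mapsto \bar z - \alpha \bar y$ where $\alpha$ is chosen so that the $4\bar z^3 \alpha \bar y$ contribution from expanding $(\bar z - \alpha\bar y)^4$ cancels the leading $\bar z$-term of $B(\bar z)\bar y$, each step being followed by a cube completion in $\bar y$ that absorbs induced $\bar y^2$-coefficients; a Cauchy sequence argument in $\widehat S$, modelled on those of \autoref{prop:D4 classification} and \autoref{prop:Dn classification}, gives $\bar x^2 + \bar y^3 + \bar z^4$ (type $\textnormal{E}_6$). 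If $\beta = 3$ and $\gamma \ge 5$, I would extract a cube root of the leading coefficient of $B$ to normalize $B = \bar z^3$, and iteratively kill $C$ by substitutions $\bar y \mapsto \bar y - \lambda(\bar z)$ with $\lambda \in (\bar z^{\gamma - 3})$ chosen so that $-\bar z^3 \lambda$ cancels the leading term of $C$, again absorbing induced $\bar y^2$-coefficients by cube completion and re-normalizing the coefficient of $\bar y$ back to $\bar z^3$ at every step; the Cauchy limit is $\bar x^2 + \bar y^3 + \bar y \bar z^3$ (type $\textnormal{E}_7$). If $\gamma = 5$ and $\beta \ge 4$, I would extract a fifth root of the unit leading coefficient of $C$ (this is where the hypothesis $p > 5$ becomes essential for the first time, so that $5 \in \kay^\times$) to normalize $C = \bar z^5$, then kill $B$ by an $\textnormal{E}_6$-style iteration, yielding $\bar x^2 + \bar y^3 + \bar z^5$ (type $\textnormal{E}_8$).

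The remaining range $\beta \ge 4$, $\gamma \ge 6$ is ruled out via a single quadratic transform. In the $\bar z$-chart at the origin ($\bar x = \bar z \bar x'$, $\bar y = \bar z \bar y'$), the strict transform is
\[
\widetilde f = \bar x'^2 + \bar z \bar y'^3 + \bar z^{\beta - 1}(\cdots)\bar y' + \bar z^{\gamma - 2}(\cdots),
\]
whose quadratic part in $(\bar x', \bar y', \bar z)$ is $\bar x'^2$ but whose cubic part vanishes (every remaining term lies in $(\bar x', \bar y', \bar z)^4$), so by the footnote argument in the proof of \autoref{lem:Three possible forms} the blowup of $V(\widetilde f)$ at this origin is non-normal, contradicting \cite[Proposition~8.1]{LipmanRationalSingularities} applied to the rational double point $V(\widetilde f)$. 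The main obstacle is expected to be the $\textnormal{E}_7$ Cauchy sequence: each cube completion perturbs the coefficient of $\bar y$ away from pure $\bar z^3$, so before iterating one must re-normalize it back to $\bar z^3$ by a fresh cube-root extraction, and showing that all three coordinate sequences $\{\bar x_n\}, \{\bar y_n\}, \{\bar z_n\}$ remain simultaneously Cauchy in $\widehat S$ under these compound adjustments demands bookkeeping noticeably more delicate than in \autoref{prop:D4 classification} or \autoref{prop:Dn classification}.
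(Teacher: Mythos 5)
Your proposal follows essentially the same route as the paper's proof: the same normal form $x^2+y^3+By+C$, the same case division on the $z$-adic orders of $B$ and $C$ (unit coefficient of $z^4$ gives $\textnormal{E}_6$, unit coefficient of $yz^3$ gives $\textnormal{E}_7$ in the completion, unit coefficient of $z^5$ gives $\textnormal{E}_8$), the same Cauchy-sequence iteration for $\textnormal{E}_7$, and the same quadratic-transform non-normality argument to exclude the residual case $\beta\ge 4$, $\gamma\ge 6$. The only organizational difference is that the paper carries out the $\textnormal{E}_6$ and $\textnormal{E}_8$ reductions as finite procedures inside the strictly Henselian ring $S$ itself (completing the ``tesseract'' and the quintic in a single step, with coefficients allowed to be arbitrary elements of $S$ rather than series in $z$ alone), so it never needs your Artin-approximation descent from $\widehat S$.
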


\begin{proof}
We may assume
\[
f=x^2+y^3 + 3\alpha y^2z^2 + \beta y z^3 + \gamma z^4
\]
for some $\alpha, \beta , \gamma \in S$ (absorbing units into $y$ as necessary). Our first observation is that by completing the cube one gets
\[
f=x^2+\bigl(y+\alpha z^2\bigr)^3 + \beta y z^3 + \gamma_1 z^4
\]
for some new $\gamma_1$. Then by declaring $y_1 = y+\alpha z^2$ we have that
\[
f=x^2 + y_1^3 + \beta y_1 z^3 + \gamma_2 z^4
\]
for some other $\gamma_2$. It is worth mentioning that the completion of the cube did not require the term $\beta yz^3$ to be changed, this will be an important observation paragraphs below. This means that we may assume $\alpha = 0$, or in other words that
\begin{equation} \label{StartingEquationE}
f=x^2+y^3+\delta yz^3 + \varepsilon z^4.
\end{equation}

\subsection*{$\varepsilon$ is a unit, the $\textnormal{E}_6$ case}
Suppose $\varepsilon$ is a unit, then $\varepsilon^{-1} f$ can be written as
\[
\varepsilon^{-1}f=x^2+y^3+4\delta_1 yz^3 + z^4
\]
after a new choice of generators of the maximal ideal. Thus, we may ``complete the tesseract'' to get
\[
\varepsilon^{-1}f=x^2+uy^3 - 6\delta_1^2 y^2z^2 +  (\delta_1 y + z)^4
\]
where $u \in S$ is a unit. By letting $z_1 = \delta_1 y + z$, we have
\begin{align*}
\varepsilon^{-1}f &=  x^2+uy^3 - 6\delta_1^2 y^2(z_1-\delta_1 y)^2 +  z_1^4 \\
&=  x^2+uy^3 - 6\delta_1^2 y^2(z_1-\delta_1 y)^2 +  z_1^4 \\
&=  x^2+vy^3 + \delta_2 y^2 z_1^2 +z_1^4
\end{align*}
with $v$ a unit and some $\delta_2 \in S$. Now, after absorption of the unit $v$ into $y$ (here we need $p>3$) and completing the cubes, we obtain, after a new choice of ``$y$,'' that
\[
\varepsilon^{-1}f=x^2 + y_1^3 + w z_1^4
\]
for some unit $w$. Absorbing $w$ into $z_1$, we see that $\varepsilon^{-1}f$ is of type $\textnormal{E}_6$.
\subsection*{$\varepsilon$ is \emph{not} a unit}
Suppose now that $\varepsilon$ is not a unit in \autoref{StartingEquationE}, then we can rewrite that equation as
\[
f=x^2 + y^3 + \rho y z^3 + \sigma z^5 + \kappa x z^4
\]
for some $\rho, \kappa, \sigma \in S$.  Relabeling $x + (\kappa/2) z^4$ as $x$, we may assume that $\kappa = 0$ (absorbing additional terms into $\sigma$), and obtain:
\begin{equation} \label{SecondEquationEtype}
f=x^2 + y^3 + \rho y z^3 + \sigma z^5.
\end{equation}

\subsection*{$\rho$ is a unit, the $\textnormal{E}_7$ case}
Suppose now that $\rho $ is a unit and replace $S$ by its completion. By replacing $z$ with $\rho^{-1/3} z$, we may assume
\[
f= x^2+y^3 + z^3(y+\sigma_1 z^2).
\]
Let $\tilde{y} = y+ \sigma_1 z^2$, then this equation becomes
\begin{align*}
f &=  x^2+\bigl(\tilde{y}-\sigma_1 z^2\bigr)^3 + \tilde{y} z^3 \\
&= x^2+ \tilde{y}^3 -3 \sigma_1 \tilde{y}^2 z^2 + u \tilde{y} z^3 - \sigma_1^3 z^6
\end{align*}
for some unit $u$. 
We write $\tilde{z} = z - (\sigma_1/u) \tilde{y}$ and so after expanding obtain
\begin{align*}
f &=  x^2+ \tilde{y}^3 -3 \sigma_1 \tilde{y}^2 \big(\tilde{z} + (\sigma_1/u) \tilde{y}\big)^2 + u \tilde{y} \big(\tilde{z} + (\sigma_1/u) \tilde{y}\big)^3 - \sigma_1^3 \big(\tilde{z} + (\sigma_1/u) \tilde{y}\big)^6\\
 & =  x^2 + v_1\tilde y^3 + u_1 \tilde y \tilde z^3 - \sigma_1^3 \tilde z^6.
\end{align*}
for some units $u_1$ and $v_1$ (one can check this easily, with for instance Macaulay2, \cite{M2}).
Let $\tilde{y}$ absorb $v_1$, and $\tilde{z}$ absorb $u_1$, so that we may assume
\[
 f = x^2 + y^3 + yz^3 + \tau z^6 = x^2 + y^3  + z^3\bigr(y+\tau z^3\bigl)
\]
for some $\tau \in S$. Let $y_1 = y + \tau z^3$, so that
\begin{align*}
f = x^2+\bigl(y_1-\tau z^3\bigr)^3 + y_1 z^3 &=x^2+ y_1^3 -3 \tau y_1^2 z^3 + 3 \tau^2 y_1 z^6 - \tau^3 z^9 + y_1 z^3\\
& = x^2 + y_1^3 + u_1 y_1 z^3 -\tau^3 z^9 \\
& = x^2 + y_1^3 +  y_1 z_1^3 +\tau_1 z_1^9\\
& = x^2 + y_1^3 + z_1^3\bigl(y_1 + \tau_1 z_1^ 6\bigr)
\end{align*}
where $u_1$ is a unit satisfying $u_1 -1 \in (y_1, z^3)$, $z_1 = u_1^{1/3} z$, and we have chosen the cubic root $u_1^{1/3}$ so that $u_1^{1/3}- 1 \in \fran$. Letting $y_2 = y_1 + \tau_1 z_1 ^6$, a similar computation yields
\begin{align*}
f = x^2+\bigl(y_2-\tau_1 z_1^6\bigr)^3 + y_2 z_1^3 &=x^2+ y_2^3 -3 \tau_1 y_2^2 z_1^6 + 3 \tau_1^2 y_2 z_1^{12} - \tau_1^3 z_1^{18} + y_2 z_1^3\\
& = x^2 + y_2^3 + u_2 y_2 z_1^3 -\tau_1^3 z_1^{18} \\
& = x^2 + y_2^3 +  y_2 z_2^3 +\tau_2 z_2^{18}\\
& = x^2 + y_2^3 + z_2^3\bigl(y_2 + \tau_2 z_2^ {15}\bigr)
\end{align*}
for some unit $u_2$ satisfying $u_2 -1 \in \bigl(y_2 z_1^3, z_1^{9}\bigr)$, $z_2 = u_2^{1/3} z_1$, and we have chosen $u_2^{1/3}$ so that $u_2^{1/3}- 1 \in \fran$. This process can be repeated inductively yielding sequences $\{y_i\}$, $\{z_i\}$, $\{\tau_i\}$, $\{u_i\}$ in $S$, and a sequence $\{e_i\}$ in $\mathbb{N}$, so that $y_{i+1} = y_i + \tau_i z_i^{e_i}$, $e_{i+1} = 3e_i-3$, $e_0=3$, $z_{i+1}=u_{i+1}^{1/3} z_i$, $u_{i+1}-1 \in \bigl(y_{i+1} z_i^{e_i - 3 }, z_i^{2e_i-3}\bigr) \subset \fran^{e_i - 2}$, and $u_{i+1}^{1/3}$ is always chosen so that $u_{i+1}^{1/3} -1 \in \fran$. Furthermore, we have that
\[
f = x^2 + y_i^3 + y_i z_i^3 + \tau_i z_i^ {e_i+3}
\]
since the sequence $\{e_i\}$ is strictly increasing, we have that the sequence $\bigl\{x^2 + y_i^3 + y_i z_i^3 \bigr\}$ converges to $f$. Moreover, we observe that both sequences $\{y_i\}$, $\{z_i\}$ are Cauchy. To see that the sequence $\{z_i\}$ is Cauchy, notice that
\[
\bigl(z_{i+1} - z_i\bigr)\bigl(z_{i+1}^2 + z_{i+1}z_i + z_i^2 \bigr) = z_{i+1}^3 - z_i^3 =z_i^3(u_{i+1}-1) \in \bigl(y_{i+1} z_i^{e_i }, z_i^{2e_i}\bigr) \subset \fran^{e_1 + 1}
 \]
 due to $u_{i+1}-1 \in \bigl(y_{i+1} z_i^{e_i - 3 }, z_i^{2e_i-3}\bigr) \subset \fran^{e_i - 2}$. Nonetheless, we claim that
 \[
 z_{i+1}^2 + z_{i+1}z_i + z_i^2 = z_i^2 \Bigl(u_{i+1}^{2/3} + u_{i+1}^{1/3} +1 \Bigr) \in \fran^2\smallsetminus \fran^3,
 \]
 or in other words that $u_{i+1}^{2/3} + u_{i+1}^{1/3} +1$ is a unit. However, $u_{i+1}^{1/3}-1 \in \fran$ and $u_{i+1}^{2/3} + u_{i+1}^{1/3} +1 = (u_{i+1}^{2/3} - 1) + (u_{i+1}^{1/3} - 1) + 3$ which is clearly a unit.   Therefore,
 \[
 z_{i+1}-z_i \in \fran^{e_i+1}:\fran^2
 \]
 and so $\{z_i\}$ is Cauchy (because $\bigcap_{e_i} (\fran^{e_i+1}:\fran^2) =0$).  Hence, we may write
 \[
 f = x^2 + y^3 + y z^3
 \]
 and $f$ is $\textnormal{E}_7$.

\subsection*{$\rho$ is not a unit, the $\textnormal{E}_8$ case}

Finally, we return to the strictly Henselian scenario and equation \autoref{SecondEquationEtype} and consider the case $\rho$ is not a unit. Then we may assume
\[
f=x^2 + y^3 + \rho_0 xyz^3 + \rho_1 y^2z^3 +\rho_2 y z^4 + \sigma z^5
\]
for some $\rho_i, \sigma \in S$. Completing the square, or in other words relabeling $x + (\rho_0/2) y z^3$ as $x$, we may assume that $\rho_0 = 0$ (absorbing terms into $\rho_1$).
Additionally, by completing the cube, relabeling $y+(\rho_1/3) z^3$ as $y$, we may likewise assume $\rho_1 = 0$ and so obtain
\begin{equation}
\label{eq.E8BeforeSimplified}
f=x^2 + y^3 + \rho_2 y z^4 + \sigma' z^5.
\end{equation}

If $\sigma'$ is unit (and for simplicity of notation, we call it $\sigma$ also), then
\[
\sigma^{-1} f = x^2 + y^3 +  5 \varrho_2 y z^4 + z^5
\]
for a new choice of generators of $\fran$ and possibly changing $\varrho_2$ up to a unit (here we are also using that $5$ is a unit).  Then, we may complete the quintic in $\sigma^{-1} f$ to obtain
\[
\sigma^{-1} f= x^2 + u y^3 + v y^2 z^3 + \bigl(\varrho_2 y + z\bigr)^5
\]
for some unit $u$ and some $v, \eta \in S$.  Relabeling $\varrho_2 y + z$ as $z$, we obtain
\[
\sigma^{-1} f= x^2 + u y^3 + v y^2 (z - \varrho_2 y)^3 + z^5 = x^2 + u' y^3 + v y^2 z^3 + z^5
\]
Completing the cube again, we obtain
\[
\sigma^{-1} f= x^2 + u'' y^3 + z^5.
\]
Hence, $\sigma^{-1} f$ is of type $\textnormal{E}_8$.

Finally, suppose $\sigma'$ is not unit.  From \eqref{eq.E8BeforeSimplified}, we obtain
\[
f=x^2+y^3+ \gamma_1 yz^4 +\sigma_1 z^6 + \nu x z^5
\]
for some $\gamma_1,\sigma_1$. As before, we may assume $\nu = 0$ by completing the square. In this way, the only case that remains is
\[
f=x^2+y^3 +\eta_1yz^4+\eta_2 z^6.
\]
Nevertheless, we claim that in that case $R=S/f$ is not a rational singularity. We will show that the blowup along the closed point is not rational, but if $R$ is a rational singularity then such a blowup have to be rational by \cite[
Proposition~8.1]{LipmanRationalSingularities}. Precisely, let $X$ be the blowup of $R$ along $\fram = \fran/f$. Then one chart of $X$ is given by the spectrum of $S[x/z, y/z]/(f/z^2)$, where by $f/z^2$ we mean
\[
f/z^2=(x/z)^2 + (y/z)^3 z  + \eta_1(y/z) z^3+\eta_2 z^4
\]
Setting $\tilde x=(x/z)$ and $\tilde y=(y/z)$, the above equation is $\tilde x^2+\tilde y^3z+\eta_1\tilde yz^3+\eta_2z^4$ so it is not a rational singularity as in the proof of \autoref{lem:Three possible forms} (note that this equation has the form $\tilde x^2+g$ where $g$ has order $4$).
\end{proof}

\section{Finite covers of rational double points by regular schemes}
\label{sec.FiniteCoversOfRationalDoublePoints}
In this section we prove Theorem~\autoref{Main Theorem covers by regular ring}, we first notice that any $2$-dimensional Gorenstein rational singularity is a rational double point (in particular, it is a hypersurface of multiplicity $2$). This fact is well-known but we cannot find a good reference in mixed characteristic. Thus we include a short argument.
\begin{lemma}
\label{lem.BS}
Let $(R,\fram,\kay)$ be a Gorenstein rational singularity of dimension $2$ that is not regular. Then $R$ has multiplicity $2$ and embedding dimension $3$. In particular, $\widehat{R}\cong S/(f)$ where $(S,\n,\kay)$ is a regular local ring of dimension $3$ and $f\in \n^2-\n^3$.
\end{lemma}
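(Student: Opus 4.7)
The plan is to establish $\mult(R) = 2$ and $\edim(R) = 3$; the presentation $\widehat R \cong S/(f)$ with $f \in \n^2 \setminus \n^3$ then follows from Cohen's structure theorem. Since completion preserves being Gorenstein, being a rational singularity, multiplicity, and embedding dimension, we may assume from the outset that $R$ is complete.

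By Lipman's resolution theorem for excellent two-dimensional schemes, there is a minimal resolution $\pi \colon Y \to X = \Spec R$. Let $E_1, \ldots, E_r$ denote the exceptional curves and $Z = \sum n_i E_i$ the Artin fundamental cycle. Artin's analysis of rational surface singularities, which is purely algebraic and carries over to mixed characteristic, yields $\mult(R) = -Z^2$ and (provided $R$ is not regular) $\edim(R) = -Z^2 + 1$. Moreover each $E_i$ is a smooth rational curve, and $p_a(Z) = 0$. Thus the lemma reduces to showing $Z^2 = -2$.

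The key consequence of the Gorenstein hypothesis is that the minimal resolution is crepant, that is, $K_Y = \pi^* K_X$. By Grothendieck duality on the smooth $2$-dimensional $Y$ and the Cohen--Macaulay affine $X$, rationality ($R^1 \pi_* \sO_Y = 0$) is equivalent to $\pi_* \omega_Y = \omega_X$. A generator of the free rank-one $R$-module $\omega_R$ therefore pulls back to a global section of $\omega_Y$ that is nonvanishing off the exceptional set, so $K_Y = \pi^* K_X + D$ for some effective exceptional divisor $D = \sum a_i E_i$. For the minimal resolution each $E_i \cong \mathbb P^1$ satisfies $E_i^2 \leq -2$, so adjunction gives $K_Y \cdot E_i = -2 - E_i^2 \geq 0$; combined with $\pi^* K_X \cdot E_i = 0$ (projection formula, since $K_X$ is Cartier), we find $D \cdot E_i \geq 0$ for every $i$. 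Since the exceptional intersection form is negative definite and $D$ is effective, $D \cdot D = \sum a_i (D \cdot E_i) \geq 0$ forces $D = 0$. Hence $K_Y \cdot Z = 0$, and adjunction on $Z$ yields $Z \cdot (Z + K_Y) = 2 p_a(Z) - 2 = -2$, so $Z^2 = -2$.

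With $\mult(R) = 2$ and $\edim(R) = 3$ in hand, Cohen's structure theorem presents $\widehat R \cong S/I$ for a complete regular local ring $(S, \n, \kay)$ of dimension $3$. As $\dim \widehat R = 2$ and $S$ is a UFD, the height-one ideal $I$ is principal, say $I = (f)$, and the multiplicity of the hypersurface $S/(f)$ equals the $\n$-adic order of $f$, so $\mult(R) = 2$ forces $f \in \n^2 \setminus \n^3$. The main obstacle I expect is the crepancy step: extracting the equality $\pi_* \omega_Y = \omega_R$ and the ensuing effective discrepancy divisor in the excellent mixed-characteristic setting. In equicharacteristic this is classical Artin--Laufer--Reid material, and in mixed characteristic it should follow from the Grothendieck duality package combined with Lipman's foundational work on $2$-dimensional normal singularities, although some care with references is warranted.
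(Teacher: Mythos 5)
Your argument is essentially correct, but it takes a genuinely different --- and much heavier --- route than the paper. You run the classical Artin-style geometric argument: minimal resolution, fundamental cycle $Z$, crepancy of the minimal resolution of a Gorenstein rational surface singularity via the negativity lemma, adjunction to get $Z^2=-2$, and then the formulas $e(R)=-Z^2$ and $\edim(R)=-Z^2+1$. The paper instead gives a three-line commutative-algebra proof: after assuming $\kay$ infinite, take a minimal reduction $(x,y)$ of $\fram$; the Brian\c{c}on--Skoda theorem for pseudo-rational singularities (Lipman--Teissier) gives $\fram^2\subseteq (x,y)$, so $\fram/(x,y)$ sits inside the socle of the Artinian Gorenstein ring $R/(x,y)$, which is one-dimensional; hence $e(R)=\ell(R/(x,y))=1+\ell(\fram/(x,y))\leq 2$, and the inequality $\edim(R)\leq e(R)+\dim(R)-1$ finishes. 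The paper's route buys exactly what you flag as your ``main obstacle'': it needs no resolution-theoretic input beyond pseudo-rationality and sidesteps crepancy, adjunction, and the behaviour of exceptional curves in mixed characteristic entirely. One point of rigor in your version: the step ``$E_i\cong\mathbb{P}^1$ with $E_i^2\leq -2$, so $K_Y\cdot E_i=-2-E_i^2\geq 0$'' is the algebraically-closed-residue-field form; over a general $\kay$ the exceptional curves may have $H^0(\sO_{E_i})$ a nontrivial extension of $\kay$ and adjunction acquires degree factors, so you should either reduce to algebraically closed residue field by a faithfully flat local base change at the outset or invoke directly the general fact that $K_Y$ is nef over $X$ for the minimal resolution. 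With that fixed, and with references to Lipman's paper for $p_a(Z)=0$, $e(R)=-Z^2$, and $\edim(R)=-Z^2+1$ in the excellent two-dimensional setting, your proof goes through.
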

\begin{proof}
We may assume $\kay$ is an infinite field. Let $(x,y)$ be a minimal reduction of $\fram$. By \cite[Theorem 2.1]{LipmanTeissierPseudoRational}, $\fram^2\subset (x,y)$. Thus, $\fram/(x,y)\subset 0:_{R/(x,y)}\fram$. Now we have
$$e(R)=l(R/(x,y))=1+l(\fram/(x,y))\leq 1+l(0:_{R/(x,y)}\fram)=2,$$
where the last equality is because $R/(x,y)$ is an Artinian Gorenstein ring so it has a $1$-dimensional socle. Thus, $e(R)=2$. But then we know that the embedding dimension of $R$ is less than or equal to $e(R)+1=3$. Therefore, $\widehat{R}$ is a hypersurface.
\end{proof}

For the rest of the proof of Theorem~\autoref{Main Theorem covers by regular ring}, we aim to show that if $X$ is a strictly Henselian rational double point of mixed characteristic $(0,p>5)$ then there exists a split finite cover of $X$ by a regular scheme. Throughout the rest of this section, $(S,\fran,\kay)$ is a regular local ring and we are in the end concerned with the case where it is of mixed characteristic $(0,p>5)$ and $(R,\fram,\kay)=S/(f)$ is the local ring of a rational double point. The proof of Theorem~\autoref{Main Theorem covers by regular ring} is separated into the cases defined by the type of the singularity of $R$ and is organized as follows:

\begin{itemize}
\item If $X$ is a rational double point of type $\textnormal{A}_n$ then there exists a cyclic cover of $X$ by a regular scheme by \autoref{thm.CoversAn};
\item If $X$ is a rational double point of type $\textnormal{D}_n$ then there exists a cyclic cover of $X$ by a rational double point of type $\textnormal{A}_{2n-5}$ by \autoref{thm.CoversDn} and   \autoref{thm.CoversDnPartII};
\item If $X$ is a rational double point of type $\textnormal{E}_6$ then there is a cyclic cover of $X$ by a rational double point of type $\textnormal{D}_4$ by \autoref{thm.CoversE6};
\item If $X$ is a rational double point of type $\textnormal{E}_7$ then there is a cyclic cover of $X$ by a rational double point of type $\textnormal{E}_6$ by \autoref{thm.CoversE7} and  \autoref{cor: E7 Henselian};
\item If $X$ is a rational double point of type $\textnormal{E}_8$ then an explicit description of a finite split cover of $X$ by a regular scheme is provided in \autoref{thm.CoversE8}.\footnote{Rational double points of type $\textnormal{E}_8$ are seen to be unique factorization domains, \cite[Theorem~25.1]{LipmanRationalSingularities}. Therefore the only cyclic cover of an $\textnormal{E}_8$ singularity is the identity map and therefore an alternative approach to finding a split finite cover of $\textnormal{E}_8$ singularities by a regular scheme is necessary.}
\end{itemize}

\begin{proposition}[Type $\textrm{A}_n$] \label{thm.CoversAn}
Suppose that $S$ is strictly Henselian of residual characteristic $p > 2$, let $R$ be of type $\mathrm{A}_n$, and write $R\cong S/(x^2+y^2+z^{n+1})$ where $x,y,z$ is a choice of minimal generators of the maximal ideal of $S$; see \autoref{prop.ThreeSquares} and \autoref{prop.AnClassification}. Then, $\mathfrak{p}=(x+iy,z)$ is a pure height-$1$ prime ideal of $R$ of order $n+1$ as an element of the divisor class group, $\mathfrak{p}^{(n+1)}= (x+iy)$, and the corresponding index $n+1$ cyclic cover is regular.
\end{proposition}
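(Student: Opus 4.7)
The plan has three stages. First, I would verify that $\mathfrak{p} = (x+iy,z)$ is a pure height-$1$ prime and that $\mathfrak{p}^{(n+1)} = (x+iy)$. Since $f = (x+iy)(x-iy) + z^{n+1}$ lies in $(x+iy,z)$, we have $R/\mathfrak{p} \cong S/(x+iy,z)$, a one-dimensional regular local domain, so $\mathfrak{p}$ is a height-$1$ prime. Because $p \neq 2$ and $S$ is strictly Henselian, the classes of $x+iy$, $x-iy$, and $z$ are linearly independent in $\mathfrak{n}/\mathfrak{n}^2$, so $x-iy$ is a unit in $R_\mathfrak{p}$; the relation $(x+iy)(x-iy) = -z^{n+1}$ then gives $\mathfrak{p} R_\mathfrak{p} = (z) R_\mathfrak{p}$, so $\ord_\mathfrak{p}(x+iy) = n+1$. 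Since $V(x+iy) = V(x+iy,z) = V(\mathfrak{p})$ in $\Spec R$, this forces $\divisor(x+iy) = (n+1)\mathfrak{p}$ and $\mathfrak{p}^{(n+1)} = (x+iy)$.

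Second, to establish the regularity of the index $n+1$ cyclic cover, I would construct it explicitly by adjoining formal $(n+1)$-th roots:
\[
T := S[a,b]\big/\bigl(a^{n+1} - (x+iy),\ ab - z,\ b^{n+1} + (x-iy)\bigr).
\]
Multiplying the first and third relations shows $f = x^2 + y^2 + z^{n+1}$ vanishes automatically in $T$, so $T$ is naturally an $R$-algebra. Solving the defining relations for $x = (a^{n+1} - b^{n+1})/2$, $y = -i(a^{n+1} + b^{n+1})/2$, $z = ab$, we see these all lie in $(a,b)^2 T$; since $p \in \mathfrak{n} = (x,y,z) S$, we also have $p \in (a,b)^2 T$. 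Consequently $T$ is local with maximal ideal $(a,b)$, giving embedding dimension at most $2$. Combined with $\dim T = \dim R = 2$ from module-finiteness over $R$, we conclude $T$ is regular of dimension two, hence a domain.

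Third, I would match $T$ with the abstract cyclic cover $C = R \oplus \mathfrak{p} \oplus \cdots \oplus \mathfrak{p}^{(n)}$ by exploiting the $\mathbb{Z}/(n+1)$-grading on $T$ with $\deg a = 1$ and $\deg b = -1$, identifying each graded piece $T_k$ with $\mathfrak{p}^{(k)}$ as reflexive rank-one $R$-modules and verifying compatibility of the multiplication structures. Once $T = C$ is known to be a regular domain, the order of $\mathfrak{p}$ in $\Cl(R)$ can be pinned down at $n+1$ as follows: if instead $\mathfrak{p}^{(d)} = (h)$ were principal for some $d \mid n+1$ with $d < n+1$, then $h^{(n+1)/d} = u(x+iy)$ for a unit $u$; comparing leading forms in $\operatorname{gr}_\mathfrak{m}(R) = \kay[\bar{x},\bar{y},\bar{z}]/(\bar{x}^2+\bar{y}^2)$ (or with an extra $\bar{z}^2$ when $n = 1$) would force the initial form of $h$ to vanish in that graded ring, contradicting $h \in \mathfrak{m}^k \setminus \mathfrak{m}^{k+1}$. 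The main obstacle I anticipate is precisely the clean identification of $T$ with $C$ in the wild case $p \mid n+1$, where no primitive $(n+1)$-th root of unity in $\kay$ is available to produce a $\mathbb{Z}/(n+1)$-Galois action; there the argument must proceed module-theoretically via rank and reflexivity rather than via representation theory.
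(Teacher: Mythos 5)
Your argument is correct in substance but follows a genuinely different route from the paper's. The paper works directly inside the abstract cyclic cover $C=\bigoplus_{k=0}^{n}\bigl(y_k,z_1^k\bigr)$, invokes \autoref{prop.Cyclic covers are domains} to see $C$ is a local domain, and then chases the relations among the generators $y_0,\dots,y_n,z_1$ (e.g.\ $y_n^{n+1-k}=y_k$) to conclude that the maximal ideal is generated by the two elements $z_1,y_n$; regularity follows with no auxiliary presentation. You instead build the explicit hypersurface-style model $T=S[a,b]/\bigl(a^{n+1}-(x+iy),\,ab-z,\,b^{n+1}+(x-iy)\bigr)$, where regularity is essentially immediate ($x,y,z,p\in(a,b)^2$, so $\edim T\le 2=\dim T$), at the cost of having to identify $T$ with $C$ afterwards. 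That identification is the one step you leave as a sketch, but it does go through, and more easily than you fear: the relations defining $T$ are homogeneous for the $\Z/(n+1)$-grading with $\deg a=-\deg b=1$, and the map sending $u\in\mathfrak{p}^{(k)}=(x+iy,z^k)$ to $u/a^k=u\cdot a^{n+1-k}/(x+iy)\in\bigl(a^{n+1-k},b^k\bigr)\subset T$ is a well-defined $R$-algebra map compatible with the multiplication $\mathfrak{p}^{(i)}\otimes\mathfrak{p}^{(j)}\to\mathfrak{p}^{(i+j-n-1)}$ given by division by $x+iy$; it is surjective since the images contain $1,a,\dots,a^n,b,\dots,b^n$, and injective because $C$ is a domain of dimension $2$ mapping onto the $2$-dimensional $T$. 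In particular no primitive $(n+1)$-th root of unity is needed, so your concern about the wild case $p\mid n+1$ is a red herring (a grading by $\Z/(n+1)$ is not a $\mu_{n+1}$-action). Your computation of $\mathfrak{p}^{(n+1)}=(x+iy)$ via $\divisor(x+iy)=(n+1)\mathfrak{p}$ is a clean divisorial alternative to the paper's primary-decomposition argument for $\mathfrak{p}^{(k)}=(x+iy,z^k)$, and your order-exactly-$n+1$ argument (if $\mathfrak{p}^{(d)}=(h)$ with $d<n+1$ then $h^{(n+1)/d}=u(x+iy)\in\mathfrak{m}^2$, contradicting $x+iy\notin\mathfrak{m}^2$) is actually spelled out more explicitly than in the paper, which leaves the non-principality of the intermediate symbolic powers implicit. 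So: same theorem, two workable proofs; the paper's avoids any identification step, yours makes the regularity transparent but obliges you to carry out the (routine) matching of $T$ with $C$.
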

\begin{proof}
For sake of notation, we write $x_0=x-iy$ and $y_0=x+iy$. We will first show that the divisor class of $\mathfrak{p}=(y_0,z)$ in $\Cl (R)$ has order $n+1$, and $\mathfrak{p}^{(k)} = \bigl(y_0,z^k\bigr)$.
\begin{claim}
\label{clm.kthSymbolicAn}
$\mathfrak{p}^{(k)} = \bigl(y_0,z^k\bigr)$ for all $k\leq n$, and $\mathfrak{p}^{(n+1)}=(y_0)$.
\end{claim}
\begin{proof}
First notice that $\mathfrak{p}^kR_{\mathfrak{p}}=z^k R_{\mathfrak{p}} = \bigl(y_0,z^k\bigr)R_{\mathfrak{p}}$ for all $0\leq k \leq n+1$, since $x_0y_0 = -z^{n+1-k} z^k$ and $x_0\notin \mathfrak{p}$. Therefore, by \cite[Proposition 4.8 (ii)]{AtiyahMacdonald}, it suffices to prove that $(y_0,z^k)$ is a $\mathfrak{p}$-primary ideal of $R$. However, this is clear as every zerodivisor of the ring
\[
R\big/\bigl(y_0,z^k\bigr) \cong S\big/\bigl(y_0,z^k\bigr)
\]
is nilpotent, and the ring has depth $1$ as it is a complete intersection.
In particular, $\fram = (x_0,y_0,z)$ cannot be an associated prime of $\bigl(y_0,z^k\bigr)$. Thus,  $(y_0,z^k)$ is $\mathfrak{p}$-primary.
\end{proof}

We construct the cyclic cover associated to the isomorphism $\mathfrak{p}^{(n+1)} = (y_0) \cong R$ where $y_0 \mapsto 1$.  We write
\[
C = \bigoplus_{k=0}^{n} \bigl(y_k,z_1^k\bigr)_R
\]
where $y_k$ denotes the copy of $y_0$ in the degree-$k$ direct summand of the cyclic cover $C$ corresponding to $\mathfrak{p}$, and $z_1$ denotes the copy of $z$ in the degree-$1$ direct summand of $C$. 

We have that $C$ is a domain by \autoref{prop.Cyclic covers are domains}, and is local with maximal ideal $\mathfrak{c} = \fram \oplus \bigoplus_{k=1}^{n} \mathfrak{p}^{(k)}$, see \cite[Proposition 4.21]{CarvajalFiniteTorsors}. Therefore, in order to show $C$ is regular it suffices to prove that $\mathfrak{c}$ is generated by two elements. To this end, notice that we have $\mathfrak{c}=(x_0,y_0,z,z_1, y_1,\ldots,y_n)$. However, these elements are subject to the following relations:
\begin{enumerate}[(1)]
\item $z_1^{n+1}=-x_0$,
\item $y_1^{n+1}=y_0^n$,
\item $y_1^k = y_0^{k-1} y_k$,
\item $y_1 y_n = y_0$, and
\item $z_1 y_n = z$,
\end{enumerate}
where $k=1,\ldots,n$. These relations imply that $\mathfrak{c}=(y_0, z_1, y_1,\ldots,y_n)$. If we raise the fourth relation $y_1 y_n = y_0$ to the $n+1-k$ power (with $0 \leq k \leq n-1$), we obtain
\[
y_1^{n+1-k} y_n^{n+1-k} =y_0^{n+1-k}.
\]
By multiplying this equation by $y_1^k$ (in case $k \neq 0$) and using the second relation, we see that
\[
y_0^n y_n^{n+1-k} = y_0^{n+1-k} y_1^{k} .
\]
The third relation, however, implies that the right hand side is equal to $y_0^{n}y_k$ (even if $k=0$). Then, by canceling $y_0^n$ out on both sides (as $C$ is a domain), we obtain the new relation
\[
y_n^{n+1-k} = y_k.
\]
for all $k=0,\ldots,n$. Therefore, $\mathfrak{c}=(z_1,y_n)$ is indeed generated by $2$ elements.
\end{proof}

\begin{proposition}[Type $\textrm{D}_n$: complete case] \label{thm.CoversDn}
Assume that $S$ is complete with separably closed residue field of characteristic $p > 3$, let $R$ be of type $\mathrm{D}_n$, and write $R\cong S/(x^2+y^2z+z^{n-1})$ where $x,y,z$ is a choice of minimal generators of the maximal ideal of $S$; see \autoref{lem:Three possible forms}, \autoref{prop:D4 classification}, and \autoref{prop:Dn classification}. Then $\mathfrak{p}=(x,z)$ is a pure height $1$ prime ideal of $R$ of order $2$ as an element of the divisor class group and the corresponding cyclic cover is a singularity of type $\mathrm{A}_{2n-5}$.
\end{proposition}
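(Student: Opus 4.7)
The plan is to establish the claims about $\mathfrak{p} = (x,z)$ directly, then present the cyclic cover explicitly as a known $\textnormal{A}$-type hypersurface.

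\emph{The divisor $\mathfrak{p}$ and its double.} Since $f = x^2 + y^2 z + z^{n-1} \in (x,z)$, we have $R/\mathfrak{p} = S/(x,z) \cong \kay\llbracket y\rrbracket$, so $\mathfrak{p}$ is a height-one prime of $R$. The equation $x^2 = -z(y^2 + z^{n-2})$ in $R$ shows that $y^2 + z^{n-2}$ is a unit in $R_\mathfrak{p}$ (as $y \notin \mathfrak{p}$), whence $x$ is a uniformizer of $R_\mathfrak{p}$ with $v_\mathfrak{p}(z) = 2$. Moreover, $R/(z) = S/(z, x^2)$ is a one-dimensional complete intersection---in particular Cohen--Macaulay---whose unique associated prime is $\mathfrak{p}$, so $(z)$ is $\mathfrak{p}$-primary. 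Combined with $(z)R_\mathfrak{p} = \mathfrak{p}^{(2)}R_\mathfrak{p}$, this gives $\mathfrak{p}^{(2)} = (z)$. Non-principality of $\mathfrak{p}$ follows from $f \in \fram_S^2$: the images of $x$ and $z$ remain linearly independent in $\fram/\fram^2$, so $\mathfrak{p}/\fram\mathfrak{p}$ has $\kay$-dimension two and $\mathfrak{p}$ needs two generators. Hence $\mathfrak{p}$ has order exactly two in $\Cl(R)$.

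\emph{The cyclic cover as an $R$-algebra.} Using the isomorphism $\mathfrak{p}^{(2)} = (z) \cong R$ sending $z \mapsto 1$, let $u$ and $v$ denote the elements $z$ and $x$ viewed in the degree-one summand of $C = R \oplus \mathfrak{p}$. The cyclic-cover multiplication $(a)(b) = ab/z$ for $a, b \in \mathfrak{p}$ yields
\[
u^2 = z, \qquad uv = x, \qquad v^2 = x^2/z = -(y^2 + z^{n-2}).
\]
Consequently, there is a surjective $R$-algebra homomorphism
\[
T \,:=\, R\llbracket U,V\rrbracket \big/ \bigl(U^2 - z,\; UV - x,\; V^2 + y^2 + z^{n-2}\bigr) \twoheadrightarrow C, \quad U \mapsto u,\; V \mapsto v,
\]
surjectivity being immediate since $C$ is generated over $R$ by $u$ and $v$.

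\emph{Identifying $T$ as $\textnormal{A}_{2n-5}$.} Pick a Cohen coefficient ring $S_0$ of $S$. Then either $S \cong S_0\llbracket x,y,z\rrbracket$ (equal characteristic) or $S$ is the quotient of $S_0\llbracket x,y,z\rrbracket$ by a single relation $p - h(x,y,z)$ with $h \in (x,y,z)$ (mixed characteristic). Substituting $z = U^2$ and $x = UV$ in the presentation of $T$, the defining equation $f$ becomes $U^2\bigl(V^2 + y^2 + U^{2n-4}\bigr)$, which is redundant given the remaining relation $V^2 + y^2 + U^{2n-4}$. In the mixed-characteristic case the extra relation becomes $p - h(UV, y, U^2)$; since $h(UV, y, U^2) \in (y,U)^2 \subset \fram^2$ in both the unramified ($h = x$, substituting to $UV$) and ramified ($h \in (x,y,z)^2$) subcases, this element lies in the maximal ideal but not its square in $S_0\llbracket y, U, V\rrbracket$, and so cuts out a three-dimensional complete regular local subring $S'$; in the equicharacteristic case we simply take $S' = S_0\llbracket y, U, V\rrbracket$. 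Either way,
\[
T \;\cong\; S' \big/ \bigl(V^2 + y^2 + U^{2n-4}\bigr),
\]
which, in the minimal generators $y, U, V$ of $\fram_{S'}$, is an equation of type $\textnormal{A}_{2n-5}$. Finally, $T$ is a two-dimensional normal domain and the surjection $T \twoheadrightarrow C$ lands in the two-dimensional domain $C$, forcing its kernel to be zero; hence $C \cong T$ is of type $\textnormal{A}_{2n-5}$. The main obstacle I anticipate is the Cohen-structure bookkeeping needed to uniformly produce $S'$ across the several characteristic cases---once that is in hand, the identification is a dimension argument.
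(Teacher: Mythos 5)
Your proof is correct and follows essentially the same route as the paper's: compute $\mathfrak{p}^{(2)} = (z)$ via localization at $\mathfrak{p}$ plus primary-ness of $(z)$, present the index-$2$ cyclic cover by generators and relations, and recognize the result as the hypersurface $V^2+y^2+U^{2n-4}$ in a $3$-dimensional complete regular local ring (you add a few details the paper leaves implicit, namely non-principality of $\mathfrak{p}$ and injectivity of $T\twoheadrightarrow C$). Two cosmetic slips worth repairing: $R/\mathfrak{p}=S/(x,z)$ is a complete DVR but need not be $\kay\llbracket y\rrbracket$ in mixed characteristic, and your two subcases for $h$ are not exhaustive (a general $h\in(x,y,z)$ can have a linear $y$-term), although the conclusion still holds because $p - h(UV,y,U^2)\equiv p-\bar{b}y \not\equiv 0 \pmod{\fram^2}$ in $S_0\llbracket y,U,V\rrbracket$, so $S'$ is regular in every case.
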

\begin{proof}
First, we prove that the divisor corresponding to $\mathfrak{p}$ has index $2$.
\begin{claim}
\label{clm.SecondSymbolicDn}
$\mathfrak{p}^{(2)}=(z)$.
\end{claim}
\begin{proof}
First of all, notice that
\[
\mathfrak{p}^2R_{\mathfrak{p}}= \bigl(x^2,xz,z^2\bigr) R_{\mathfrak{p}} = z R_{\mathfrak{p}}
\]
as $x^2=-y^2z-z^{n-1}$. In this way, it suffices to observe that $(z)$ is a $\mathfrak{p}$-primary ideal of $R$ \cite[Proposition 4.8 (ii)]{AtiyahMacdonald}.
\end{proof}
Let $C=R \oplus ( x_1,z_1)_R$ be the cyclic cover corresponding to $\mathfrak{p}$. Then, we have the following relations:
\[
x_1^2=-y^2-z^{n-2}; \quad x_1 z_1 = x; \quad z_1^2 = z.
\]
This implies that the maximal ideal of $C$ is $\mathfrak{c}=(x,y,z,x_1,z_1)=(y,x_1,z_1)$. Therefore, there is an isomorphism of $R$-algebras
\[
C' \coloneqq R[\xi,\zeta]\big/\bigl(\xi^2+y^2+z^{n-2}, \xi \zeta - x, \zeta^2 - z\bigr) \to C, \, \xi \mapsto x_1, \, \zeta \mapsto z_1.
\]
Observe that $C'$ is local with maximal ideal $\mathfrak{c'}=(y, \xi, \zeta)$ and residue field $\kay$. On the other hand, note that
\[
C' \cong S[\xi, \zeta]\big/ \bigl(x^2+y^2z+z^{n-1}, \xi^2+y^2+z^{n-2}, \xi \zeta - x, \zeta^2 - z\bigr).
\]
However, the ideal we are modding out by equals
\[
\bigl( \xi^2 + y^2 + \zeta^{2n-4}, \xi \zeta-x, \zeta^2-z  \bigr)
\]
as
\[
x^2 + y^2 z +z^{n-1} \equiv (\xi \zeta)^2+y^2 \zeta^2 +\bigl(\zeta^2\bigr)^{n-1} \equiv \zeta^2\bigl(\xi^2+y^2+\zeta^{2n-4}\bigr) \bmod{\bigl(\xi\zeta - x, \zeta^2-z\bigr)},
\]
and similarly
\[
\xi^2+y^2+z^{n-2} \equiv \xi^2 +y^2 + \zeta^{2n-4} \bmod {\bigl(\xi\zeta - x, \zeta^2-z\bigr)}.
\]
Therefore,
\[
C'\cong S[\xi,\zeta]\big/\bigl( \xi^2 + y^2 + \zeta^{2n-4}, \xi \zeta-x, \zeta^2-z  \bigr) \cong \Bigl( S[\xi,\zeta]\big/\bigl(\xi \zeta-x, \zeta^2-z \bigr) \Bigr) \Big/ \Bigl( \xi^2 + y^2 + \zeta^{2n-4} \Bigr)
\]
where we see that $S[\xi,\zeta]\big/\bigl(\xi \zeta-x, \zeta^2-z \bigr)$ is a $3$-dimensional complete regular local ring with maximal ideal $(\xi, y, \zeta)$. Thus, we have shown that $C$ is isomorphic to a ring of type $\textnormal{A}_{2n-5}$.
\end{proof}

\begin{corollary}[Type $\textrm{D}_n$: Henselian case]\label{thm.CoversDnPartII}
Assume that $S$ is strictly Henselian of residual characteristic $p > 3$ and suppose that $R$ is of type $\textnormal{D}_n$. Then there exists a finite split cover of $\Spec(R)$ by a regular scheme.
\end{corollary}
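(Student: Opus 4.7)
The plan is to lift the complete-case construction of \autoref{thm.CoversDn} back to the strictly Henselian $R$ via the divisor class group, and then to iterate with the $\textnormal{A}_n$ result. The underlying fact I will rely on throughout is that, for a two-dimensional Henselian rational singularity, the natural map $\Cl(R) \to \Cl(\widehat{R})$ is an isomorphism---rationality is detected on the completion and the class group is computed from the dual graph of the minimal resolution; see, for instance, Lipman's work on Picard groups of rational singularities.

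First I would apply \autoref{thm.CoversDn} to $\widehat{R}$ to obtain a pure height-one prime $\widehat{\mathfrak{p}} \subset \widehat{R}$ whose class has order $2$ in $\Cl(\widehat{R})$ and whose associated index-$2$ cyclic cover is of type $\textnormal{A}_{2n-5}$. Using $\Cl(R) \cong \Cl(\widehat{R})$, I pick a pure height-one prime $\mathfrak{p} \subset R$ representing the corresponding class, observe that $\mathfrak{p}^{(2)}$ is then principal, and form the cyclic cover $C = R \oplus \mathfrak{p}$. Then $C$ is a finite, local, normal $R$-algebra whose completion $\widehat{C}$ is naturally identified with the cyclic cover of $\widehat{R}$ constructed above; in particular $\widehat{C}$ is of type $\textnormal{A}_{2n-5}$. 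Since $R$ is strictly Henselian and $C$ is a local module-finite extension with the same residue field $\kay$, $C$ is strictly Henselian as well.

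Next I would repeat the argument at the level of $C$: apply \autoref{thm.CoversAn} to $\widehat{C}$ to produce an index-$(2n-4)$ cyclic cover $\widehat{D}$ of $\widehat{C}$ by a regular local ring, and then descend this cover to $C$ through $\Cl(C) \cong \Cl(\widehat{C})$---legitimate because $\widehat{C}$, being of type $\textnormal{A}_{2n-5}$, is again a rational singularity. This yields a finite cyclic cover $D$ of $C$ with $\widehat{D}$ regular, and since regularity is equivalent between a Noetherian local ring and its completion, $D$ itself is regular.

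The composite $R \to C \to D$ is then a finite extension with regular target. Moreover, each of the two intermediate cyclic covers $A \to A \oplus \mathfrak{a} \oplus \cdots \oplus \mathfrak{a}^{(k-1)}$ is automatically $A$-split via the projection onto the first direct summand, so composing the two splittings exhibits $R$ as an $R$-module direct summand of $D$. The main obstacle I anticipate is justifying the class-group descent $\Cl(R) \cong \Cl(\widehat{R})$ (and similarly for $C$), which is precisely where the rationality of the singularities is essential; the arguments of \autoref{thm.CoversDn} and \autoref{thm.CoversAn} themselves transfer verbatim once the divisor class is available on the Henselian ring.
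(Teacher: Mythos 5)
Your proposal is correct and follows essentially the same route as the paper: both hinge on Lipman's result that the divisor class group of a Henselian rational surface singularity is unchanged under completion, use it to transport the order-$2$ class from \autoref{thm.CoversDn} back to $R$, and identify the resulting cyclic cover with the $\textnormal{A}_{2n-5}$ singularity of the complete case. The only cosmetic difference is that you run the completion-descent a second time for the $\textnormal{A}_{2n-5}$ step, whereas the paper simply invokes \autoref{thm.CoversAn}, which is already stated for strictly Henselian rings.
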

\begin{proof}
The divisor class group of a strictly Henselian $2$-dimensional local ring with rational singularity is unchanged under completion by results of \cite{LipmanRationalSingularities}. Indeed, by \cite[Proposition~17.1]{LipmanRationalSingularities}, the divisor class group of $R$ is the same as the group $H$ defined on \cite[Page 222 (3)]{LipmanRationalSingularities}. But by \cite[Proposition 16.3 and Correction on page 279]{LipmanRationalSingularities}, the group $H$ is unaffected when passing from $R$ to $\widehat{R}$. Therefore the divisor class group of $R$ agrees with the divisor class group of $\widehat{R}$. By \autoref{thm.CoversDn}  there exists a height $1$ prime ideal $\mathfrak{p}\subset R$ of order $2$ as an element of the divisor class group corresponding to the $\mathfrak{p}$ in \autoref{thm.CoversDn}.  Since the characteristic does not divide 2, the cyclic cover is unique \'etale locally, see \cite[Definition 2.49]{KollarKovacsSingularitiesBook}, and thus the corresponding cyclic cover is a singularity of type $\textnormal{A}_{2n-5}$.
\end{proof}

\begin{proposition}[Type $\textrm{E}_6$] \label{thm.CoversE6}
Assume that $S$ is strictly Henselian of residual characteristic $p > 5$, let $R$ be of type $\textnormal{E}_6$, and write $R\cong S/(x^2+y^3+z^4)$ where $x,y,z$ is a choice of minimal generators of the maximal ideal of $S$, see \autoref{prop:E6,E7,E8}. Then $\mathfrak{p}=\bigl(y,x+iz^2\bigr)$ is a height $1$ prime ideal of $R$ of order $3$ as an element of the divisor class group and the corresponding cyclic cover is a singularity of type $\mathrm{D}_4$.
\end{proposition}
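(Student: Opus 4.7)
The plan follows the pattern of \autoref{thm.CoversDn}: first show $\mathfrak{p}^{(3)} = (x+iz^2)$ is principal and the class of $\mathfrak{p}$ has order exactly $3$, then present the cyclic cover $C = R \oplus \mathfrak{p} \oplus \mathfrak{p}^{(2)}$ as an explicit finite quotient of a three-dimensional regular local ring, and finally identify $C$ with a $D_4$ singularity using \autoref{lem:Three possible forms} and \autoref{prop:D4 classification}.

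The key observation is the factorization $x^2 + z^4 = (x+iz^2)(x-iz^2)$, so that in $R = S/(x^2+y^3+z^4)$ we have the identity $(x+iz^2)(x-iz^2) = -y^3$. Quotienting gives $R/\mathfrak{p} \cong \kay[[z]]$, confirming $\mathfrak{p}$ is a height-$1$ prime. The image of $x-iz^2$ in $R/\mathfrak{p}$ is $-2iz^2$, which is nonzero since $p>2$, so $x-iz^2 \notin \mathfrak{p}$; combined with the key identity, this gives $\mathfrak{p} R_\mathfrak{p} = (y) R_\mathfrak{p}$. A primary decomposition check on the complete intersection $R/(x+iz^2) \cong \kay[[y,z]]/(y^3)$, which has unique minimal prime $(y)$ and no embedded primes, then yields $\mathfrak{p}^{(3)} = (x+iz^2)$, so the class $[\mathfrak{p}]$ has order dividing $3$ in $\Cl(R)$. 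Since the generators $y$ and $x+iz^2$ of $\mathfrak{p}$ have linearly independent images in $\fram/\fram^2$, the ideal $\mathfrak{p}$ is not principal, and so the order of $[\mathfrak{p}]$ is exactly $3$.

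Next, construct $C$ via the isomorphism $\mathfrak{p}^{(3)} \cong R$ given by $x+iz^2 \mapsto 1$. Denote by $a$ the element $y$ placed in the degree-$1$ piece $\mathfrak{p}$, and by $c$ the element $x+iz^2$ placed in the degree-$2$ piece $\mathfrak{p}^{(2)}$. A direct computation of the multiplication yields three defining relations: $ac = y$ from $y(x+iz^2) \in \mathfrak{p}^{(3)} \mapsto y$, $a^3 = -(x-iz^2)$ from $y^3 \in \mathfrak{p}^{(3)} \mapsto -(x-iz^2)$ (using the key identity), and $c^3 = x+iz^2$ from $(x+iz^2)^3 \in \mathfrak{p}^{(6)} \mapsto x+iz^2$; note also $c^2$ recovers the remaining generator $(x+iz^2)_{(1)}$ of $\mathfrak{p}$, so no further generators are needed. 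Eliminating $x = iz^2 - a^3$ and $y = ac$ reduces the three relations to the single equation $a^3 + c^3 - 2iz^2 = 0$ in the variables $z, a, c$. Since $\mathfrak{c} = (z,a,c)$ and $C$ has dimension $2$, a dimension count identifies $C$ with the quotient of a regular local ring in the variables $z,a,c$ by $(a^3+c^3-2iz^2)$.

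Finally, identify this as $D_4$. Since $p>5$ and $\kay$ is separably closed, $\kay$ contains $\sqrt{-2i}$ and a primitive cube root of unity $\omega$. Setting $w = \sqrt{-2i}\, z$, the equation becomes $w^2 + (a+c)(a+\omega c)(a+\omega^2 c) = 0$, a square plus a product of three pairwise distinct linear forms in $a,c$. This is exactly the situation of Case $1$ in the proof of \autoref{lem:Three possible forms}; after a linear change of coordinates in $(a,c)$ and applying \autoref{prop:D4 classification} to the completion, one concludes $C$ is of type $D_4$. The main bookkeeping challenge I anticipate is verifying the multiplication in the cyclic cover so that the presentation $C \cong \kay[[z,a,c]]/(a^3+c^3-2iz^2)$ really is an isomorphism (not just a surjection onto a proper quotient); once that is secured, the factorization of $a^3 + c^3$ into three distinct linear forms makes the $D_4$ identification immediate.
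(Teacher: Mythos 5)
Your proposal is correct and follows essentially the same route as the paper: the same prime $\mathfrak{p}=(y,x+iz^2)$ with $\mathfrak{p}^{(3)}=(x+iz^2)$, the same presentation of the cyclic cover by the single equation $a^3+c^3-2iz^2=0$ in a three-dimensional regular local ring (your $a,c$ are the paper's $y_1,x_2$), and the same reduction to \autoref{prop:D4 classification} by factoring the sum of cubes into three distinct linear forms. The isomorphism you flag as the remaining bookkeeping is asserted at the same level of detail in the paper's proof; it can be secured by noting that $C$ is a domain of generic rank $3$ over $R$ and that $a^3+c^3-2iz^2$ is irreducible (its initial form is $-2iz^2$, yet the element does not lie in $(z)+\fran^4$, so it cannot factor into two non-units).
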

\begin{proof}
For notation ease, we write $x_0=x+iz^2$. Thus, the defining equation $f=x^2+y^3+z^4$ becomes
\[
f=\bigl(x+iz^2\bigr)(x-iz^2)+y^3= x_0\bigl(x_0-2iz^2\bigr)+y^3=x_0\bigl(x_0+z_0^2\bigr)+y^3,
\]
where $z_0 = (-2i)^{1/2}z$. Note that $x_0,y,z_0$ is a choice of generators of the maximal ideal of $S$.

As before, we start off with the following claim proving the order of $\mathfrak{p}=(x_0,y)$ is $3$.
\begin{claim}
$\mathfrak{p}^{(2)}=\bigl(x_0,y^2\bigr)$ and $\mathfrak{p}^{(3)}=\bigl(x_0\bigr)$.
\end{claim}
\begin{proof}
Let $k \leq 3$, we prove $\mathfrak{p}^{k}=\bigl(x_0,y^k\bigr)$. To this end, we notice that these two ideals coincide after we localize at $\mathfrak{p}$, for $x_0\bigl(x_0+z_0^2\bigr)+y^3=0$ and $x_0 + z_0^2 \notin \mathfrak{p}$. Hence, it is enough to prove that $\bigl(x_0,y^k\bigr)$ is a $\mathfrak{p}$-primary ideal of $R$. This follows, as in \autoref{clm.kthSymbolicAn} above, from observing that
\[
R\big/\bigl(x_0, y^k \bigr) \cong S\big/\bigl(x_0, y^k \bigr),
\]
so that all zerodivisors of are nilpotent, and the depth of this ring is $1$, whereby $\fram$ is not primary to $\bigl(x_0,y^k\bigr)$.
\end{proof}
Next, we consider the corresponding Veronese-type cyclic cover
\[
C= R \oplus (x_1, y_1)_R \oplus \bigl(x_2 , y_1^2\bigr)_R
\]
where $x_k$ denotes the copy of $x_0$ in the degree-$k$ direct summand of $C=\bigoplus_{k=0}^2{\mathfrak{p}^{(k)}}$, and similarly, $y_1$ denotes the copy of $y$ in the first direct summand. These variables are subject to the following relations:
\begin{eqnarray}
\label{eqn.relations1} x_1^3 = x_0^2, \\
\label{eqn.relations2} y_1^3+x_0+z_0^2=0,\\
\label{eqn.relations3} x_1 x_2 = x_0, \\
\label{eqn.relations4} x_1 y_1^2 = y^2, \\
\label{eqn.relations5} y_1 x_2 = y.
\end{eqnarray}
Additionally, we deduce that
\[
x_1^2 x_2^2=(x_1x_2)^2 = x_0^2 = x_1^3,
\]
by using the third and first relations. By canceling $x_1^2$ out on both sides (as $C$ is a domain), we obtain the extra relation
\begin{equation} \label{eqn.relation6}
    x_2^2 = x_1.
\end{equation}
Then, we see that the fourth relations follows from this equation and the fifth relation. \emph{A priori}, the maximal ideal of $C$ is $\mathfrak{c}=(x_0,y,z_0,x_1,y_1,x_2)$, but given the above constraints, we see that $\mathfrak{c}=(z_0,y_1,x_2)$. Also, we conclude that
\begin{equation} \label{eqn.relation7}
x_2^3 = x_2^2 x_2 = x_1 x_2 = x_0
\end{equation}
by using \autoref{eqn.relation6} and \autoref{eqn.relations3}. In summary, we can see that we have an isomorphism of $R$-algebras
\[
C' \coloneqq R[\gamma, \xi]\big/\bigl(\gamma^3+x_0+z_0^2, \xi^3 - x_0, \gamma \xi -y \bigr) \to C, \, \gamma \mapsto y_1, \, \xi \mapsto x_2.
\]
On the other hand, we note that $C'$ is isomorphic to $S[\gamma, \xi]$ modulo the ideal
\[
\bigl(y^3+x_0^2+x_0z_0^2, \gamma^3+x_0+z_0^2, \xi^3 - x_0, \gamma \xi -y \bigr).
\]
However, modulo $\bigl(\xi^3-x_0,\gamma \xi-y\bigr)$, we have
\[
y^3+x_0^2+x_0z_0^2 \equiv (\gamma \xi)^3 + \bigl(\xi^3\bigr)^2 + \xi^3 z_0^2 = \xi^3 \bigl( \gamma^3 + \xi^3 + z_0^2 \bigr) \textnormal{ and } \gamma^3+x_0+z_0^2 \equiv \gamma^3 + \xi^3 + z_0^2.
\]
Hence, the above ideal equals the ideal
\[
\bigl(\gamma^3 + \xi^3 + z_0^2, \xi^3 - x_0, \gamma \xi -y
\bigr).
\]
In this manner,
\[
C' \cong \Bigl( S[\gamma,\xi]\big/\bigl(\xi^3 - x_0, \gamma \xi -y\bigr) \Bigr)\Big/\Bigl(\gamma^3 + \xi^3 + z_0^2\Bigr)
\]
where $S[\gamma,\xi]\big/\bigl(\xi^3 - x_0, \gamma \xi -y\bigr)$ is a $3$-dimensional regular local ring with maximal ideal $(z_0,\gamma,\xi)$. To see this last quotient is a $\mathrm{D}_4$ singularity, we may pass to the completion and notice that the sum of cubes can be factored as the product of three different lines; see \autoref{lem:Three possible forms} and  \autoref{prop:D4 classification}.
\end{proof}

\begin{proposition}[Type $\textrm{E}_7$: complete case]
\label{thm.CoversE7}
Assume that $S$ is complete with separably closed residue field of characteristic $p > 5$, let $R$ be of type $\textnormal{E}_7$, and write $R\cong S/(x^2+y^3+yz^3)$ where $x,y,z$ is a choice of minimal generators of the maximal ideal of $S$, see \autoref{prop:E6,E7,E8}. Then $\mathfrak{p}=\bigl(x,y\bigr)$ is a height $1$ prime ideal of $R$ of order 2 in the divisor class group and the corresponding cyclic cover is a singularity of type $\mathrm{E}_6$.
\end{proposition}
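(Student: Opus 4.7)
The plan is to follow closely the template of \autoref{thm.CoversE6}: first identify $\mathfrak{p}^{(2)}$ as a principal ideal, then construct the index $2$ cyclic cover explicitly via generators and relations, and finally recognize it as a hypersurface of type $\textnormal{E}_6$ via the classification in \autoref{prop:E6,E7,E8}. Factoring the defining equation as $x^2 + y(y^2+z^3)$ suggests the correct principal generator.

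First, I would verify that $\mathfrak{p}=(x,y)$ is a height $1$ prime. Primality follows from $R/\mathfrak{p} \cong S/(x,y) \cong \kay\llbracket z \rrbracket$, which is a $1$-dimensional domain. Next, I would establish the analogue of \autoref{clm.kthSymbolicAn}:
\begin{claim*}
$\mathfrak{p}^{(2)} = (y)$ in $R$.
\end{claim*}
At the localization $R_\mathfrak{p}$ we have $y^2+z^3$ a unit (since $z\notin\mathfrak{p}$), so the equation $x^2 = -y(y^2+z^3)$ gives $\mathfrak{p}^2 R_\mathfrak{p} = (x^2, xy, y^2)R_\mathfrak{p} = yR_\mathfrak{p}$. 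Hence it suffices to show $(y)$ is $\mathfrak{p}$-primary in $R$. But $R/(y) \cong S/(y,x^2)$ has radical $(x,y)$ and depth $1$ as a complete intersection, so $\fram$ is not an associated prime. Since $y\notin \fram^2$ (the variables $x,y,z$ are linearly independent in $\fram/\fram^2$), $\mathfrak{p}$ itself cannot be principal, so its order in $\Cl(R)$ is exactly $2$.

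Next, I would construct the cyclic cover $C = R \oplus \mathfrak{p}$, using the isomorphism $\mathfrak{p}^{(2)} = (y)\cong R$ given by $y\mapsto 1$. Writing $x_1, y_1$ for the copies of $x, y$ in the degree-$1$ summand, the multiplication in $C$ yields
\[
x_1^2 = x^2/y = -(y^2 + z^3), \qquad x_1 y_1 = x, \qquad y_1^2 = y.
\]
Using $y = y_1^2$ and $x = x_1 y_1$, the maximal ideal of $C$ is $\mathfrak{c} = (z, x_1, y_1)$, and there is a surjection of $R$-algebras
\[
C' \coloneqq R[\xi,\eta]\big/\bigl(\xi^2 + \eta^4 + z^3,\ \xi\eta - x,\ \eta^2 - y\bigr) \twoheadrightarrow C, \quad \xi\mapsto x_1,\ \eta \mapsto y_1.
\]
Writing $R = S/(x^2+y^3+yz^3)$ and substituting $x = \xi\eta$, $y = \eta^2$ into the defining equation yields
\[
x^2 + y^3 + yz^3 = \eta^2 \bigl( \xi^2 + \eta^4 + z^3\bigr),
\]
so this relation is already a consequence of the others. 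Therefore
\[
C \cong S[\xi,\eta]\big/\bigl(\xi^2+\eta^4+z^3,\ \xi\eta - x,\ \eta^2 - y\bigr) \cong \bigl(S[\xi,\eta]/(\xi\eta-x, \eta^2-y)\bigr) \big/ \bigl(\xi^2+\eta^4+z^3\bigr).
\]
The ring $T \coloneqq S[\xi,\eta]/(\xi\eta - x, \eta^2 - y)$ is a $3$-dimensional complete regular local ring with maximal ideal $(z,\xi,\eta)$, and the equation $\xi^2 + \eta^4 + z^3$ is exactly the $\textnormal{E}_6$ normal form from \autoref{prop:E6,E7,E8}(a)(i) (after relabeling variables). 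Hence $C$ is of type $\textnormal{E}_6$, as desired. The main technical point will be the verification that $(y)$ is $\mathfrak{p}$-primary and that no stray generators remain in the presentation of $C$; both reduce to routine checks using that the defining equation factors as $x^2 + y(y^2+z^3)$.
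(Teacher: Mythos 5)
Your proof is correct and follows essentially the same route as the paper's: identify $\mathfrak{p}^{(2)}=(y)$, form the index-$2$ cyclic cover with the relations $x_1^2=-(y^2+z^3)$, $x_1y_1=x$, $y_1^2=y$, and present it as the hypersurface $\xi^2+\eta^4+z^3$ in the $3$-dimensional regular local ring $S[\xi,\eta]/(\xi\eta-x,\ \eta^2-y)$, which is the $\textnormal{E}_6$ normal form (your extra details on $\mathfrak{p}$-primariness and on why the order is exactly $2$ are fine and merely spell out what the paper leaves implicit). One cosmetic point: in mixed characteristic $S/(x,y)$ is a complete DVR with residue field $\kay$ but need not be $\kay\llbracket z\rrbracket$; all your argument actually uses is that it is a one-dimensional domain, which does hold.
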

\begin{proof}
From the relation $x^2+y^3+yz^3=0$, we see that $\mathfrak{p}^{(2)}=(y)$, so the assertion about the torsion of the divisor class of $\mathfrak{p}$ follows. Consider the associated cyclic cover
\[
C=R \oplus (x_1, y_1)_R
\]
with relations $x_1^2+y^2+z^3=0$, $x_1 y_1 = x$, and $y_1^2=y$. In particular, the maximal ideal of $C$ is $\mathfrak{c}=(x_1,y_1,z)$.

Next, we consider the isomorphisms:
\begin{align*}
C &\cong R[\xi, \gamma]\big/\bigl( \xi^2 +y^2 +z^3, \xi \gamma - x, \gamma^2 - y \bigr)\\
&\cong S[\xi, \gamma] \big/ \bigl( x^2+y^3+yz^3, \xi^2 +y^2 +z^3, \xi \gamma - x, \gamma^2 - y   \bigr) \\
&= S[\xi, \gamma] \big/ \bigl(\xi^2 +\gamma^4 +z^3, \xi \gamma - x, \gamma^2 - y   \bigr)  \\
&\cong \Bigl( S[\xi, \gamma] \big/ \bigl( \xi \gamma - x, \gamma^2 - y \bigr) \Bigr) \Big/ \Bigl( \xi^2 +\gamma^4 +z^3 \Bigr)
\end{align*}
where $S[\xi, \gamma] \big/ \bigl( \xi \gamma - x, \gamma^2 - y \bigr)$ is a $3$-dimensional regular complete local ring with maximal ideal $(\xi,\gamma,z)$. Hence, $C$ is of type $\mathrm{E}_6$.
\end{proof}

\begin{corollary}[Type $\textrm{E}_7$: Henselian case]
\label{cor: E7 Henselian} Assume that $S$ is strictly Henselian with residual characteristic $p > 5$ and suppose that $R$ is of type $\textnormal{E}_7$. Then there exists a finite split cover of $\Spec(R)$ by a regular scheme.
\end{corollary}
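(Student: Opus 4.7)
The plan is to mirror the strategy of Corollary \ref{thm.CoversDnPartII}. The central ingredient is the invariance of the divisor class group of a 2-dimensional strictly Henselian rational singularity under completion, which follows from Lipman's \cite[Propositions~16.3 and 17.1]{LipmanRationalSingularities}: we have $\Cl(R) \cong \Cl(\widehat{R})$. Once this is available, we use Proposition \ref{thm.CoversE7} on $\widehat R$ to produce a degree-2 cyclic cover yielding an $\textnormal{E}_6$ singularity, then iterate the earlier results of this section to descend all the way to a regular scheme.

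More precisely, I will first apply Proposition \ref{thm.CoversE7} to $\widehat{R}$, obtaining a height-1 prime $\hat{\mathfrak{p}} \subset \widehat{R}$ of order $2$ in the class group whose associated degree-$2$ cyclic cover is of type $\textnormal{E}_6$. Via the isomorphism $\Cl(R) \cong \Cl(\widehat{R})$, this corresponds to a height-$1$ prime $\mathfrak{p} \subset R$ of order $2$. Since $p > 5$ is coprime to $2$, the cyclic cover is determined \'etale-locally (\cite[Definition~2.49]{KollarKovacsSingularitiesBook}), so the degree-2 cyclic cover $R \to C_1$ produces a ring $C_1$ whose completion is of type $\textnormal{E}_6$. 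As a cyclic cover of a strictly Henselian local domain is again a strictly Henselian local domain (by \autoref{prop.Cyclic covers are domains} together with the fact that finite extensions of Henselian local rings split into Henselian local factors), $C_1$ itself is a strictly Henselian rational double point of type $\textnormal{E}_6$.

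Next, I iterate the earlier covering results. Applying Proposition \ref{thm.CoversE6} to $C_1$ yields a degree-$3$ cyclic cover $C_1 \to C_2$ with $C_2$ a strictly Henselian singularity of type $\textnormal{D}_4$. Applying Corollary \ref{thm.CoversDnPartII} to $C_2$ then gives a finite split cover $C_2 \to C_3$ by a regular scheme (which internally cascades through an $\textnormal{A}_3$ singularity via \autoref{thm.CoversAn}). Since each cyclic cover in this chain has $R$ as the degree-$0$ direct summand and hence splits as a map of modules, the composition $R \to C_3$ is a finite split cover of $\Spec(R)$ by a regular scheme, as required.

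The main obstacle is Step~1: Proposition \ref{thm.CoversE7} is stated only in the complete case, and the equation $x^2+y^3+yz^3$ may not be achievable with coordinates in $S$ itself (as evidenced by part (2) of \autoref{prop:E6,E7,E8}, which produces the $\textnormal{E}_7$ normal form only after passing to $\widehat S$). The resolution, exactly as in the $\textnormal{D}_n$ case, is to exploit Lipman's class-group invariance to transport the relevant height-$1$ prime from $\widehat{R}$ back to $R$, and then to rely on the \'etale-local uniqueness of index-$2$ cyclic covers in residue characteristic $\neq 2$ to ensure the resulting cover has the same singularity type as the one constructed upstairs.
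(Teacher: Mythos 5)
Your argument is exactly the paper's: the paper's proof of this corollary consists of the single remark that the technique is identical to that of \autoref{thm.CoversDnPartII}, namely using Lipman's invariance of the divisor class group under completion together with the \'etale-local uniqueness of cyclic covers of index prime to $p$ to transport the order-$2$ prime of \autoref{thm.CoversE7} from $\widehat{R}$ back to $R$. Your additional spelling out of the cascade $\textnormal{E}_7 \to \textnormal{E}_6 \to \textnormal{D}_4 \to \textnormal{A}_3 \to$ regular, with splitting at each stage, matches the organizational outline the paper gives at the start of \autoref{sec.FiniteCoversOfRationalDoublePoints}.
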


\begin{proof}
The proof technique is identical to that of \autoref{thm.CoversDnPartII}.
\end{proof}

\begin{proposition}[Type $\textrm{E}_8$: complete case]
 \label{thm.CoversE8}
 Assume that $S$ is complete with separably closed residue field of characteristic $p > 5$, let $R$ be of type $\textnormal{E}_8$, and write $R\cong S/(x^2+y^3+z^5)$ where $x,y,z$ is a choice of minimal generators of the maximal ideal of $S$, see \autoref{prop:E6,E7,E8}. Then there exists a split finite cover of $\Spec(R)$ by a regular scheme.  Furthermore, this cover is \'etale on the punctured spectrum.
\end{proposition}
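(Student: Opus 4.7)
The strategy is to realize $R$ as the invariant ring $T^G$ for a faithful action of the binary icosahedral group $G = \widetilde{A}_5$ (of order $120 = 2^3 \cdot 3 \cdot 5$) on a $2$-dimensional regular complete local ring $T$, with the action free on the punctured spectrum. Since $p > 5$ and $|G| = 120$, the order $|G|$ is a unit in $R$; the Reynolds operator $\frac{1}{|G|}\sum_{g \in G} g \colon T \to T^G = R$ then provides an $R$-linear splitting of $R \hookrightarrow T$, while freeness of the action on the punctured spectrum makes $\Spec(T) \to \Spec(R)$ \'etale there. This is the standard Kleinian picture over $\bC$ pushed to mixed characteristic.

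First, I would lift the classical faithful representation $G \hookrightarrow \mathrm{SL}_2(\bC)$ to a faithful representation $G \hookrightarrow \mathrm{SL}_2(W)$, where $W = W(\kay)$ is the ring of Witt vectors. The non-identity elements of $G$ have orders in $\{2, 3, 4, 5, 6, 10\}$, all coprime to $p$; the needed primitive roots of unity exist in $W$ by Hensel's lemma, so the standard matrix presentations of $G$ descend to $\mathrm{SL}_2(W)$. This yields a $G$-action on the $3$-dimensional regular complete local ring $U = W\llbracket u, v \rrbracket$ by $W$-linear substitutions in $u,v$.

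By Klein's classical invariant theory \cite{KleinLecturesIcosahedron}, the subring $W[u,v]^G$ is generated as a $W$-algebra by three homogeneous polynomials $F_{12}, F_{20}, F_{30}$ of degrees $12, 20, 30$, satisfying a single relation of the form
\[
F_{30}^2 + a F_{20}^3 + b F_{12}^5 = 0, \qquad a, b \in W^{\times}.
\]
Because this construction is carried out over $\bZ[1/30]$, it descends to $W$ when $p > 5$ with all coefficients remaining units. After absorbing $a, b$ into redefined generators (possible since $W$ is strictly Henselian and $p > 5$), one obtains $U^G \cong W\llbracket X, Y, Z \rrbracket/(X^2 + Y^3 + Z^5)$. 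Reconciling this with the given presentation $R = S/(x^2+y^3+z^5)$ arising from Cohen's structure theorem, one identifies $R \cong U^G/(h)$ for a suitable element $h \in U^G$ involving the uniformizer $p$ and the $F_i$, and sets $T = U/(h)$; the resulting $T$ is a $2$-dimensional regular complete local ring with the induced $G$-action, and $T^G = R$ by the exactness of $G$-invariants (as $|G|$ is a unit).

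Finally, one checks the freeness of the $G$-action on $\Spec(T) \smallsetminus V(\fram_T)$. For any $g \neq e$ in $G$, the eigenvalues of $g$ acting on $Wu \oplus Wv$ are non-trivial roots of unity of orders coprime to $p$, whence $\det(g - \mathrm{id})$ is a unit in $W$ (the relevant cyclotomic norms are units for $p > 5$). Consequently the fixed locus of $g$ on $\Spec(U)$ is exactly $V(u,v) = \Spec(W)$, and intersecting with $V(h)$ (which contains $p$ non-trivially) reduces this further to the closed point of $\Spec(T)$. Hence $\Spec(T) \to \Spec(R)$ is a $G$-torsor on the punctured spectrum, \'etale there, and split by the Reynolds operator. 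The main expected obstacle is pinning down the element $h \in U^G$---that is, identifying how the prime $p$ sits in $R = S/(x^2+y^3+z^5)$ relative to the Klein invariants $F_{12}, F_{20}, F_{30}$---so that $U/(h)$ is genuinely regular and the invariant-ring identification $U^G/(h) \cong R$ is compatible with the inherited Cohen presentation of $R$.
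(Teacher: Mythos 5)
Your proposal follows the conceptual Kleinian route that the paper credits as the \emph{source} of its construction but deliberately does not execute: the paper simply writes down Klein's three icosahedral invariants $f_1, f_2, f_3$ explicitly (degrees $30$, $20$, $12$, with the coefficient $-\sqrt[3]{2^8\cdot 3^4}$ normalizing the single relation to $f_1^2+f_2^3+f_3^5=0$), verifies that relation by direct computation, and defines the cover $R \to A = W(\kay)\llbracket u,v\rrbracket\big/\bigl(p - Q(f_1,f_2,f_3)\bigr)$ by $x,y,z \mapsto f_1,f_2,f_3$, where $S \cong W(\kay)\llbracket x,y,z\rrbracket/(p-Q)$ is a Cohen presentation. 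The splitting then comes from the field trace rather than a Reynolds operator, once the extension is known to have generic degree $120$ (a unit for $p>5$); the paper gets that degree by inverting $p$, quoting the classical characteristic-zero statement, and descending via a short Cohen--Macaulay/projective-dimension argument, and it checks the \'etale locus computationally. What the explicit route buys is that one never proves anything about $W(\kay)[u,v]^G$: your assertion that the invariant ring over $W(\kay)$ is generated by three invariants subject to a single relation of the stated shape is true in the non-modular case, but it is precisely the step in your argument that requires genuine work (flatness of the invariant ring over $W(\kay)$, identification of its special and generic fibres, completion commuting with invariants), and it is exactly what the explicit $f_i$ together with the degree count replace.

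Two further points. First, the ``main expected obstacle'' you flag --- pinning down $h$ and showing $U/(h)$ is regular --- in fact has a one-line resolution, and it is the paper's device: choose the Cohen presentation with $Q \in (x,y,z)W(\kay)\llbracket x,y,z\rrbracket$ (possible since $p \in \fram_S$), and set $h = p - Q(F_{30},F_{20},F_{12})$. Since the $F_i$ lie in $(u,v)^{12}$, one has $h \equiv p \bmod (u,v)^{12}$, so $h$ is a regular parameter of $W(\kay)\llbracket u,v\rrbracket$ and $T = U/(h)$ is regular of dimension $2$, while $U^G/(h) \cong W(\kay)\llbracket X,Y,Z\rrbracket\big/\bigl(X^2+Y^3+Z^5,\ p - Q(X,Y,Z)\bigr) \cong R$ by exactness of invariants. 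Second, your fixed-point computation is sound: the eigenvalue orders lie in $\{2,3,4,5,6,10\}$, so $\det(g-\mathrm{id})$ is a unit for $p>5$, and $h \equiv p \bmod (u,v)$ cuts the fixed locus down to the closed point, giving the \'etale statement on the punctured spectrum. So the genuine gap to close is the invariant-theoretic input over $W(\kay)$, not the identification of $h$; modulo that, your argument and the paper's arrive at the same cover.
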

\begin{proof}
Let $f_1$, $f_2$, and $f_3$ be the following polynomials in $\mathbb{Q}(\sqrt[3]{2},\sqrt[3]{3})[u,v]$:
\begin{align*}
    f_1 &= u^{30}+522u^{25}v^5-10005u^{20}v^{10}-10005u^{10}v^{20}-522u^5v^{25}+v^{30};\\
    f_2 &= -\sqrt[3]{2^{8}\cdot 3^4}(u^{20}-228u^{15}v^5+494u^{10}v^{10}+228u^5v^{15}+v^{20});\\
    f_3 &= -u^{11}v-11u^6v^6+uv^{11}.
\end{align*}

By direct computation, one checks these polynomials satisfy the equation
\begin{equation} \label{eqn.fiEquation}
f_1^2+f_2^3+f_3^5 = 0.
\end{equation}
Note we initially found these $f_i$ by using the {\tt InvariantRing} package \cite{InvariantRingSource} for Macaulay2 \cite{M2}, see also \cite{HawesComputingTheInvariantRing}.  Explicitly, we considered the action of the icosahedral group on a polynomial ring over a field.
J.~Lipman pointed out to us that the same expressions appear also in \cite[Chapter 13]{KleinLecturesIcosahedron}.  These $f_i$ also work in our more general setting, as we demonstrate below.

Now, we fix an isomorphism
\[
S \cong W(\kay)\llbracket x,y,z \rrbracket\big/\bigl(p-Q(x,y,z)\bigr)
\]
where $p$ is the residual characteristic of $S$ and $Q(x,y,z) \in  W(\kay)\llbracket x,y,z \rrbracket$.
\footnote{We are not assuming $Q \in \fran^2$, \eg if $Q = z$, then $S = W(\kay)\llbracket x,y \rrbracket$ corresponds to the unramified case.} In this fashion, our singularity $R$ can be assumed to be
\[
R = W(\kay)\llbracket x,y,z \rrbracket\big/\bigl(p-Q(x,y,z), x^2+y^3+z^5\bigr).
\]
Next, we let $A$ be the (possibly ramified) $2$-dimensional complete regular local $W(\kay)$-algebra
\[
A\coloneqq W(\kay)\llbracket u,v\rrbracket\big/ \bigl( p-Q(f_1, f_2, f_3)\bigr),
\]
here we view $f_1, f_2, f_3 \in W(\kay)\llbracket u,v\rrbracket$ in the obvious way.  Consider the map of $W(\kay)$-algebras $R \to A$ given by sending $x, y, z$ to $f_1, f_2, f_3$, respectively. This map is well-defined because the $f_i$ satisfy the equation \autoref{eqn.fiEquation}.

It remains to prove that $R \to A$ is a finite split extension. To this end, it suffices to prove it is a finite extension of degree $120$, an invertible element in $W(\kay)$, for in that case the trace map $\Tr_{A/R} \: A \to R$ can be used to split the extension $R \to A$.

In order to prove that $R \to A$ is a finite extension of degree $120$, we notice that the map of $W(\kay)$-algebras
\[
W(\kay)\llbracket x,y,z \rrbracket\big/\bigl( x^2+y^3+z^5\bigr) \to W(\kay)\llbracket u,v \rrbracket
\]
obtained by sending $x,y,z$ to $f_1, f_2, f_3$, respectively, is a finite extension of degree $120$. Indeed, if we invert $p$, this follows from the equicharacteristic zero case. Hence, the result follows from the following general fact applied to $\mathfrak{p} = (p-Q)$.
\begin{claim}
Let $A \to B$ be a finite extension of domains of (generic) degree $d$ such that $B$ is Cohen--Macaulay. For any prime ideal $\mathfrak{p}$ of $A$ such that $\pd(A/\mathfrak{p})$ is finite, we have that $A/\mathfrak{p} \to B/\mathfrak{p}B$ is a finite extension of (generic) degree $d$.
\end{claim}
\begin{proof}
Since $\pd(A/\mathfrak{p})$ is finite, we have that $\textnormal{A}_{\mathfrak{p}}$ is a regular local ring. Since $B$ is Cohen--Macaulay, so is $B_{\mathfrak{p}}$ and thus the extension $A_{\mathfrak{p}} \to B_{\mathfrak{p}}$ is finite free, whose rank must be $d$. Of course, the rank $A_{\mathfrak{p}} \to B_{\mathfrak{p}}$ is nothing but its residual degree, which is the (generic) degree of $A/\mathfrak{p} \to B/\mathfrak{p}B$.
\end{proof}

Finally, we prove that $R \to A$ is \'etale on the punctured spectrum.  It suffices to show that the map $W(\kay)\llbracket x,y,z \rrbracket\big/\bigl( x^2+y^3+z^5\bigr) \to W(\kay)\llbracket u,v \rrbracket$ is \'etale on the punctured spectrum.  But this can be checked by hand (or in a computer algebra system working over $\mathbb{Z}$ with $2,3,5$ inverted).  This completes the proof.
\end{proof}

\begin{corollary}[Type $\textrm{E}_8$: Henselian case]
\label{cor: E8 Henselian} Assume that $S$ is strictly Henselian of residual characteristic $p > 5$ and suppose that $R$ is of type $\mathrm{E}_8$. Then there exists a finite split cover of $\Spec(R)$ by a regular scheme.
\end{corollary}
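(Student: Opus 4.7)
The plan is to deduce the Henselian case from the complete case \autoref{thm.CoversE8}, in the spirit of \autoref{thm.CoversDnPartII} and \autoref{cor: E7 Henselian}. Unlike the types $\textnormal{D}_n$ and $\textnormal{E}_7$, however, every $\textnormal{E}_8$ singularity is a UFD (so $\Cl(R) = 0$), and therefore no nontrivial cyclic cover of $R$ is available to descend. Instead, we descend directly the degree-$120$ finite split cover $\widehat{R} \to \widehat{A}$ furnished by \autoref{thm.CoversE8}, exploiting the fact that it is \'etale on the punctured spectrum.

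Concretely, set $U \coloneqq \Spec R \setminus \{\fram\}$ and $\widehat{U} \coloneqq \Spec \widehat{R} \setminus \{\fram \widehat{R}\}$, and write $\widehat{V} \to \widehat{U}$ for the restriction of $\Spec \widehat{A} \to \Spec \widehat{R}$; this is a finite \'etale cover of degree $120$. Since $R$ is an excellent, strictly Henselian, normal, two-dimensional local ring, finite \'etale covers of $\widehat{U}$ descend to finite \'etale covers of $U$ (a consequence of Artin approximation, equivalently of the invariance of the local \'etale fundamental group of the punctured spectrum under completion for such rings). Let $V \to U$ be the descended cover and take $A$ to be the integral closure of $R$ in the total ring of fractions of $V$; excellence and two-dimensional normality guarantee that $A$ is finite over $R$. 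By construction there is a canonical identification $A \otimes_R \widehat{R} \cong \widehat{A}$ as $\widehat{R}$-algebras, and since faithfully flat descent preserves regularity over Noetherian rings, the regularity of $\widehat{A}$ forces $A$ to be regular. Finally, because $p > 5$ implies $120$ is a unit in $R$, the normalized trace $\tfrac{1}{120}\Tr_{A/R}\colon A \to R$ splits the inclusion $R \hookrightarrow A$ as a map of $R$-modules.

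The main obstacle will be the descent step, namely the passage of the degree-$120$ \'etale cover from $\widehat{U}$ to $U$, together with the identification $A \otimes_R \widehat{R} \cong \widehat{A}$. If a direct appeal to the comparison of \'etale fundamental groups proves awkward to reference, one may alternatively invoke Artin--Popescu approximation applied to the system of polynomial equations that presents $\widehat{A}$ as an $\widehat{R}$-algebra (essentially the relations expressing $x,y,z$ as the specific polynomials $f_1,f_2,f_3$ in the two generators $u,v$), producing an $R$-algebra $A'$ whose $\fram$-adic completion is isomorphic to $\widehat{A}$, and then conclude as above by faithful flatness.
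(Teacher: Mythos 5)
Your proposal is correct and follows essentially the same route as the paper: the paper likewise invokes the invariance of the local (punctured-spectrum) fundamental group under completion (citing Elkik, cf.\ Bhatt--Gabber--Olsson) to descend the cover of \autoref{thm.CoversE8} to a finite extension $R \to A'$ that is \'etale on the punctured spectrum and whose completion is the extension of \autoref{thm.CoversE8}, then deduces regularity of $A'$ from regularity of its completion. Your additional remarks on splitting via $\tfrac{1}{120}\Tr$ and on identifying $A \otimes_R \widehat{R}$ with $\widehat{A}$ simply make explicit what the paper leaves implicit.
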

\begin{proof}
Using \cite[Page 579]{ElkikSolutionsdEquations}, \cf \cite[Page 3]{BhattGabberOlssonSpreadOut}, the local fundamental group (the fundamental group of the punctured spectrum) of $R$, is unchanged by passage to completion.  Hence, there exists a finite extension $R \to A'$, \'etale on the punctured spectrum, such that $\widehat{R} \to \widehat{A'}$ is isomorphic to the extension constructed in \autoref{thm.CoversE8}.  Since the completion of $A'$ is regular, so is $A'$, and the result follows.
\end{proof}

We end this section by giving an example showing that, similar to the equicharacteristic $p>0$ scenario, rational double points of mixed characteristic $(0,p)$ with $p$ small are not always direct summands of regular rings.

\begin{example}
\label{example}
Let $R=W(\kay)\llbracket y,z\rrbracket/(p^2+y^3+z^5)$ where $\kay$ is an algebraically closed field of characteristic $p=3$. One can check that $R$ has a rational singularity (for example, use \cite[Proposition 8.1]{LipmanRationalSingularities}). We claim that $R$ cannot be a direct summand of a regular ring. In fact, direct summands of regular rings are splinters (in mixed characteristic this follows from \cite{AndreDSC}). Thus if $R$ is a direct summand of a regular ring, then for any module-finite extension $T$ of $R$, $y^2\notin (p,z)T$ (since $y^2\notin (p,z)R$). However, this is not true by the following claim.
\begin{claim}
There exists a module-finite extension $T$ of $R$ such that $y^2\in (p,z)T$.
\end{claim}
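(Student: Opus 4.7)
The plan is to construct $T$ in two stages, exploiting the fact that $p=3$ gives the ``modular factorization'' $y^3+\zeta^{15} \equiv (y+\zeta^5)^3 \pmod{p}$ after adjoining a cube root $\zeta = z^{1/3}$. This is the mixed-characteristic shadow of the obvious reason $R/p = \kay\llbracket y,z\rrbracket/(y^3+z^5)$ fails to be $F$-pure in characteristic $3$.

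First, I would set $R_1 := R[\zeta]/(\zeta^3 - z)$. This is free of rank $3$ over $R$, so $R \hookrightarrow R_1$ is a module-finite extension. Writing $s := y + \zeta^5 \in R_1$ and using $p = 3$, I compute
\[
s^3 = y^3 + 3y^2\zeta^5 + 3y\zeta^{10} + \zeta^{15} = (y^3+\zeta^{15}) + p\,y\zeta^5\, s = -p^2 + p\,y\zeta^5\, s,
\]
where the last step uses the defining relation $y^3 + \zeta^{15} = -p^2$ of $R_1$. Multiplying this identity by $s$ yields the key relation
\[
s^4 - p\,y\zeta^5\, s^2 + p^2\, s = 0 \qquad \text{in } R_1.
\]

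Second, I would use this relation to adjoin the element $u := s^2/p$. Since $R_1$ is a hypersurface in a three-dimensional regular local ring and $p^2 + y^3 + \zeta^{15}$ is not divisible by $p$, the element $p$ is a nonzero-divisor in $R_1$, so $u$ makes sense in the total ring of fractions of $R_1$. Dividing the key relation by $p^2$ shows that $u$ satisfies the monic polynomial
\[
u^2 - y\zeta^5\, u + s = 0
\]
over $R_1$. Thus $T := R_1[u] = R_1 + R_1\cdot u$ is module-finite over $R_1$, hence over $R$; and $R \hookrightarrow T$ because $T$ lives inside the total ring of fractions of $R_1$.

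Finally, by construction $pu = s^2 = (y+\zeta^5)^2 = y^2 + 2y\zeta^5 + \zeta^{10}$ in $T$, so
\[
y^2 = pu - 2y\zeta^5 - \zeta^{10} \in (p,z)\,T,
\]
since $\zeta^5 = \zeta^2 \cdot z$ and $\zeta^{10} = \zeta^7 \cdot z$. The main obstacle is guessing which integral element to adjoin: merely extending by $\zeta = z^{1/3}$ only gives $s^3 \in p\,R_1$, which is not enough; the trick of multiplying the cubic relation by $s$ to expose the ``$s^2/p$''-type integral element in a further quadratic extension is the essential step.
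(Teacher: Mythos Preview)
Your proof is correct and takes a genuinely different route from the paper. The paper proceeds by writing $y^2 = 3v + zu$ and then directly manipulating the defining relation $y^3 = -9 - z^5$ to produce an explicit pair of monic cubic equations
\[
u^3 + 3yz^3 u + (2z^7 + 36z^2) = 0, \qquad v^3 - y^2 v^2 + (6 + yuz) = 0,
\]
so the needed extension is obtained in one step, without ever adjoining a root of $z$. Your approach instead first passes to $R_1 = R[z^{1/3}]$ to exploit the characteristic-$3$ identity $(y+\zeta^5)^3 \equiv y^3 + \zeta^{15} \pmod p$, and then makes a quadratic extension adjoining $s^2/p$. What your approach buys is conceptual transparency: it makes visible that the obstruction is the same Frobenius-degeneracy that kills $F$-purity of $R/p$, and the integral element $u = s^2/p$ is found by a systematic trick (multiply the cubic relation by $s$ and divide by $p^2$) rather than by ad hoc algebra. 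The paper's approach, by contrast, avoids the auxiliary cube root of $z$ and stays closer to $R$, at the cost of a somewhat opaque computation. One small remark: your justification that $p$ is a nonzero-divisor in $R_1$ is cleaner if you simply note that $R_1$ is free over the domain $R$; the ``hypersurface'' description you give requires a moment's thought about completeness to parse.
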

\begin{proof}
Recall that $p=3$, so we write $y^2=3v+zu$ and we will solve $u$ and $v$ using monic equations over $R$. Since $v=(y^2-uz)/3$, we have
$$v^3=\frac{y^6-u^3z^3+3y^2u^2z^2-3y^4uz}{27} \hspace{1em} \text{ and } \hspace{1em} y^2v^2=\frac{3y^6-6y^4uz+3y^2u^2z^2}{27}.$$
Thus we have
$$v^3-y^2v^2=\frac{-2y^6-u^3y^3+3y^4uz}{27}.$$
Plug in $y^3=-9-z^5$ and $y^6=81+z^{10}+18z^5$ to the above equation to have
$$v^3-y^2v^2+(6+yuz)+\frac{z^3(u^3+3yz^3u+2z^7+36z^2)}{27}=0.$$
Consider the following equations on $u,v$:
\[\begin{cases}
  u^3+3yz^3u+(2z^7+36z^2)=0  \\
  v^3-y^2v^2+(6+yuz)=0.
\end{cases}\]
Since both equations are monic, it is clear that there exists a module-finite extension $T$ of $R$ such that $u,v$ has solutions in $T$. Thus working backwards we see that $y^2\in(p,z)T$.
\end{proof}
\end{example}

\section{Cyclic covers of \BCMReg{} singularities}
\label{sec.CyclicCoversOfBCM}

In this final section, we give an application of Theorem~\autoref{Main Theorem covers by regular ring} to \BCMReg{B} singularities introduced in \cite{MaSchwedeSingularitiesMixedChar}. We first collect some notations from \cite{MaSchwedeSingularitiesMixedChar}.
Let $(R,\fram)$ be a complete local ring of dimension $d$.  For every big Cohen--Macaulay $R$-algebra $B$ we define
\[
0^B_{H_\m^d(R)}=\ker\big(H_\m^d(R)\to H_\m^d(B)\big).
\]
We further define
\[
\mytau_B(\omega_R)=\Ann_{\omega_R}0^B_{H_\m^d(R)}=\Big(H_\m^d(R)\Big/0^B_{H_\m^d(R)} \Big)^\vee \subset \omega_R
\]
to be the BCM parameter test submodule. If $(R,\fram)$ is a complete normal local domain, $\Delta \geq 0$ is an effective $\bQ$-divisor such that $K_R+\Delta$ is $\bQ$-Cartier, and $B$ is a big Cohen--Macaulay $R^+$-algebra,\footnote{Here $R^+$ denote the absolute integral closure of $R$: the integral closure of $R$ inside an algebraic closure of its fraction field.} then we can define the BCM test ideal $\mytau_B(R,\Delta)$ as a variant of $\mytau_B(\omega_R)$ (see \cite[Definition 6.9]{MaSchwedeSingularitiesMixedChar} for the detailed definition). A $\bQ$-Gorenstein complete normal local domain $(R,\fram)$ is called \BCMReg{B} if $\mytau_B(R)\coloneqq \mytau_B(R,0)=R$. For the purpose of this paper, we point out that $(R,\fram)$ is \BCMReg{B} if and only if $R\to B$ is pure \cite[Theorem 6.12, Proposition 6.14]{MaSchwedeSingularitiesMixedChar}.

\begin{lemma}
\label{lem.CyclicCoverOfBCMRegIsBCMReg}
Suppose that $(R, \fram)$ is a normal $\bQ$-Gorenstein local ring of mixed characteristic $(0, p)$.  Suppose that $D \geq 0$ is a Weil divisor of index $n$ on $\Spec R$.  Let $S = \bigoplus_{i = 0}^{n-1} R(iD)$ denote the cyclic cover.  Let $B$ be a big Cohen--Macaulay $\widehat{R}^+$-algebra. Then $\widehat{R}$ is \BCMReg{B} if and only if $\widehat{S}$ is \BCMReg{B}.
\end{lemma}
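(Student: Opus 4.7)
The plan is to apply the characterization from \cite[Theorem 6.12 and Proposition 6.14]{MaSchwedeSingularitiesMixedChar}: a complete normal $\bQ$-Gorenstein local domain $T$ equipped with a big Cohen--Macaulay $T^{+}$-algebra $B$ is \BCMReg{B} if and only if $T\to B$ is pure. To apply this on both sides of the desired equivalence I first verify that $\widehat{S}$ is a complete normal $\bQ$-Gorenstein local domain: it is a domain by \autoref{prop.Cyclic covers are domains}, it is local because $\widehat{R}$ is Henselian and $\widehat{S}$ is a module-finite domain extension, it is normal since cyclic covers of normal rings attached to a torsion Weil divisor are normal, and $K_{\widehat{S}}$ is $\bQ$-Cartier via the ramification formula $K_{\widehat{S}}=\pi^{*}(K_{\widehat{R}}+\tfrac{n-1}{n}D)$ for the finite map $\pi\colon\Spec\widehat{S}\to\Spec\widehat{R}$. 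Since $\widehat{R}^{+}=\widehat{S}^{+}$, $B$ is also a big Cohen--Macaulay $\widehat{S}^{+}$-algebra, so the criterion applies to $\widehat{S}$ as well.

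The direction ``$\widehat{S}$ \BCMReg{B} implies $\widehat{R}$ \BCMReg{B}'' is immediate: the cyclic cover structure exhibits $\widehat{R}\hookrightarrow\widehat{S}$ as a split monomorphism of $\widehat{R}$-modules via the projection to the degree-zero summand, and composing this split injection with the pure map $\widehat{S}\to B$ yields a pure map $\widehat{R}\to B$. For the reverse direction I plan to use the explicit realization of the cover inside $\widehat{R}^{+}$: fixing $f\in\widehat{R}$ with $\divisor(f)=nD$ and a chosen $n$-th root $f^{1/n}\in\widehat{R}^{+}$, the cover is realized as $\widehat{S}=\bigoplus_{i=0}^{n-1}f^{i/n}\widehat{R}(iD)\hookrightarrow\widehat{R}^{+}\hookrightarrow B$. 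Via Matlis duality over the complete local ring $\widehat{S}$, purity of $\widehat{S}\to B$ is equivalent to the injectivity of $E_{\widehat{S}}\to E_{\widehat{S}}\otimes_{\widehat{S}}B$; the duality identification $\pi_{*}\omega_{\widehat{S}}\cong\bigoplus_{i=0}^{n-1}\omega_{\widehat{R}}(-iD)$ for the finite map $\pi$ dualizes to a decomposition of $E_{\widehat{S}}$ as an $\widehat{R}$-module into twisted copies of $E_{\widehat{R}}$ by the reflexive rank-one modules $\widehat{R}(iD)$. Purity of $\widehat{R}\to B$ supplies the injection $E_{\widehat{R}}\hookrightarrow E_{\widehat{R}}\otimes_{\widehat{R}}B$, and multiplication by $f^{i/n}$ inside $B$ transports this injectivity to each twisted summand.

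The main obstacle I anticipate is the final assembly step: verifying that the summand-wise injections on the $\widehat{R}$-module decomposition of $E_{\widehat{S}}$ really combine into an $\widehat{S}$-linear injection, given that the $\widehat{S}$-module structure on $E_{\widehat{S}}$ mixes the graded pieces via multiplication in $\widehat{S}$. The compatibility should come from the fact that $B$ is an $\widehat{R}^{+}$-algebra and hence supports multiplication by $f^{i/n}$ compatible with the embedding $\widehat{S}\subset\widehat{R}^{+}$. An alternative route would be to phrase the argument directly in terms of the BCM parameter test submodule $\mytau_{B}(\omega_{\widehat{S}})$ and use $\pi_{*}\omega_{\widehat{S}}=\bigoplus\omega_{\widehat{R}}(-iD)$ to reduce the equivalence to a parallel assertion about $\mytau_{B}$ of twisted modules on the base, which may be more directly accessible than the $E_{\widehat{S}}$ formulation.
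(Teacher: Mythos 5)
There is a genuine gap, and it is precisely the point on which the paper's proof turns. You assert that $\widehat{S}$ is normal ``since cyclic covers of normal rings attached to a torsion Weil divisor are normal.'' This is false in general in mixed characteristic: when $p \mid n$, the cover $R_{\frp}[t]/(t^n - u)$ at a codimension-one point need not be regular, so $S$ can fail to be R1. (It is only guaranteed to be an S2, G1 domain.) Since the lemma is applied in the paper to the canonical cover, whose index may well be divisible by $p$, this case cannot be excluded. Your entire framework --- invoking the purity criterion of \cite[Theorem 6.12, Proposition 6.14]{MaSchwedeSingularitiesMixedChar} for $\widehat{S}$, and the duality decomposition of $E_{\widehat{S}}$ --- presupposes that $\widehat{S}$ is a normal $\bQ$-Gorenstein domain, so the hard half of the equivalence is not actually addressed. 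In the paper's argument, normality of $S$ is a \emph{conclusion}, not a hypothesis: one first shows $\mytau_B(S) = S$ and hence that $S \to B$ is pure, and only then deduces that $S \to S^{\mathrm{N}}$ splits, forcing $S = S^{\mathrm{N}}$. (Your ramification formula $K_{\widehat{S}} = \pi^*(K_{\widehat{R}} + \frac{n-1}{n}D)$ is also the wrong one for this type of cover --- here the cover is generically \'etale in codimension one and $K_S \sim \pi^* K_R$ --- but that is a secondary issue.)

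Beyond that, the mechanism you propose for the forward direction is incomplete even in the tame case $p \nmid n$ (where the paper simply cites \cite[Corollary 6.20]{MaSchwedeSingularitiesMixedChar}). Purity of $\widehat{R} \to B$ gives injectivity of $E_{\widehat{R}} \to E_{\widehat{R}} \otimes B$, but the twisted summands of $E_{\widehat{S}}$ correspond to statements of the form ``$\mytau_B$ of a divisorially twisted pair equals the twist,'' which do not follow from $\mytau_B(R) = R$ by merely multiplying by $f^{i/n}$ in $B$; you flag the assembly problem yourself, but the deeper issue is that summand-wise injectivity is not established either. The tool that actually closes the argument in the paper is the transformation rule for BCM test ideals under finite maps, $\Tr(\mytau_B(S, \pi^*H - \Ram_{S/R})) = \mytau_B(R, H)$ \cite[Theorem 6.17]{MaSchwedeSingularitiesMixedChar} (whose proof is checked not to require normality of $S$), combined with the fact that the degree-zero projection $T$ satisfies $T(\frn) \subseteq \fram$: from $T(\mytau_B(S)) = \mytau_B(R) = R$ one concludes $\mytau_B(S) \not\subseteq \frn$, hence $\mytau_B(S) = S$. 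Your suggested ``alternative route'' via $\pi_*\omega_{\widehat{S}} = \bigoplus \omega_{\widehat{R}}(-iD)$ is pointed in roughly the right direction, but without the trace transformation rule it does not yet constitute a proof.
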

\begin{proof}
Without loss of generality, we may assume that $R$ is complete.
If $S$ is \BCMReg{B}, it is a normal domain, then $R$ is \BCMReg{B} since $R \to S$ splits, this follows from \cite[Theorem 6.12, Proposition 6.14]{MaSchwedeSingularitiesMixedChar}.

Conversely, suppose that $R$ is \BCMReg{B}.  In the case that $p$ does not divide $n$, $S$ is normal and \cite[Corollary 6.20]{MaSchwedeSingularitiesMixedChar} implies that $S$ is \BCMReg{B}.

Thus, we assume noe $p \mid n$. Note that $S$ is still a domain by \autoref{prop.Cyclic covers are domains}. Even though $S$ is not necessarily normal, it is G1 (Gorenstein in codimension 1) since in codimension 1 it is a cyclic cover of a Gorenstein ring (localizing at a height one prime, we are adjoining a single variable and modding out by a single equation).
Write the induced map $\pi : \Spec S \to \Spec R$ and notice that we can pull back $\pi^* D$ to obtain an almost Cartier divisor in the sense of \cite{HartshorneGeneralizedDivisorsOnGorensteinSchemes}.  We also have that $\pi^* D$ and $\pi^* K_R$ are $\bQ$-Cartier.  Notice finally that $\pi^*D$ is in fact Cartier since we took a cyclic cover.  Recall that the ring $S$ is local with maximal ideal $\frn := \fram \oplus R(D) \oplus R(2D) \oplus \dots \oplus R((n-1)D)$.  There is also a trace-like map $T : S \to R$ which projects onto the first coordinate, which satisfies $T(\frn) \subset \fram$ and which generates $\Hom_R(S, R)$ as an $S$-module, see for example \cite[Section 4.4]{CarvajalFiniteTorsors}.

Let $\Tr \in \Hom_R(S, R)$ denote the field trace and write $\Tr(-) = T(s \cdot -)$ for some $s \in S$; we define $\Ram_{S/R} := \Div_S(s)$.  Even though $S$ is not necessarily normal, since it is G1 and S2, we may define a canonical divisor $K_S$ with $K_S = \pi^* K_R - \Ram_{S/R} \sim \pi^* K_R$.  It follows that $K_S$ is also $\bQ$-Cartier with $nK_S$ Cartier.  In this setting we may define $\mytau_B(S) := \mytau_B(\omega_S, K_S)$ as in \cite[Definition 6.2]{MaSchwedeSingularitiesMixedChar}, it is still an ideal of $S$ just as in \cite[Lemma 6.8]{MaSchwedeSingularitiesMixedChar}.

Choose a Cartier divisor $H = \Div_R(r)$ on $R$ such that $\pi^* H \geq \Ram_{S/R}$.  We know from \cite[Theorem 6.17]{MaSchwedeSingularitiesMixedChar}, whose proof does not use that $S$ is normal, that
\begin{align*}
r T(\mytau_B(S)) = T(r\mytau_B(S)) = T(r\mytau_B(S, \Ram_{S/R} - \Ram_{S/R})) &= T(s \mytau_B(S, \pi^* H - \Ram_{S/R}))\\
&= \Tr(\mytau_B(S, \pi^* H - \Ram_{S/R})\\
&= \mytau_B(R, H) = r \mytau_B(R).
\end{align*}
Hence $T(\mytau_B(S)) = \mytau_B(R)$. However, since $\mytau_B(R) = R$, and $T(\frn) \subset \fram$, we must have $\mytau_B(S) \not\subset \frn$.  Hence $\mytau_B(S) = S$, which also proves, arguing exactly as in \cite[Theorem 6.12]{MaSchwedeSingularitiesMixedChar} (again working in the G1 and S2 instead of the normal case), that $S \to B$ is pure.  Among other things, since $S \to B$ factors through the normalization of $S$, $S^{\mathrm{N}}$, this implies $S \to S^{\mathrm{N}}$ splits, and hence $S$ is normal. Thus \cite[Proposition 6.14]{MaSchwedeSingularitiesMixedChar} implies that $S$ is \BCMReg{B}.
\end{proof}

\begin{theorem}
Let $(R, \fram, \kay)$ be a strictly Henselian 2-dimensional local ring of mixed characteristic $(0, p>5)$. Let $B$ be a big Cohen--Macaulay $\widehat{R}^+$-algebra. Suppose that $\widehat{R}$ is \BCMReg{B}. Then there exists a finite split extension $R \subset T$ with $T$ regular.
\end{theorem}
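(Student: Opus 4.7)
The plan is to reduce the theorem to the Gorenstein rational case handled by \autoref{Main Theorem covers by regular ring} via a canonical cyclic cover, and then compose the resulting splittings. Since $\widehat{R}$ is \BCMReg{B}, it is by hypothesis $\bQ$-Gorenstein and normal, and the Cartier index $n$ of $K_{R}$ agrees with that of $K_{\widehat R}$ because $R$ is excellent and local. Form the canonical $n$-fold cyclic cover
\[
S \;=\; R \oplus R(-K_R) \oplus R(-2K_R) \oplus \cdots \oplus R(-(n-1)K_R)
\]
associated to a chosen trivialization $R(-nK_R) \cong R$. By \autoref{prop.Cyclic covers are domains} $S$ is a (local) domain; by construction it is Gorenstein (its canonical module is free of rank one); and the inclusion $R \hookrightarrow S$ splits as $R$-modules via projection onto the zeroth summand. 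Moreover $S$ is excellent since $R$ is, and being a finite local extension of a strictly Henselian ring it is itself strictly Henselian.

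The next step is to verify that $S$ satisfies the hypotheses of \autoref{Main Theorem covers by regular ring}. Since $\widehat{S}$ is canonically identified with the analogous canonical cover of $\widehat{R}$, \autoref{lem.CyclicCoverOfBCMRegIsBCMReg} implies that $\widehat{S}$ is \BCMReg{B}. Because $S$ is Gorenstein, $\omega_S \cong S$, so the identity $\mytau_B(S) = S$ gives $\mytau_B(\omega_S) = \omega_S$; unwinding the definition of $\mytau_B(\omega_S)$ as the annihilator of $0^B_{H^2_\fram(\widehat S)}$ forces $H^2_\fram(\widehat S) \hookrightarrow H^2_\fram(B)$ to be injective, which yields that $\widehat{S}$ has rational singularities (this is the Gorenstein/\BCMReg{B} implies rational passage used throughout \cite{MaSchwedeSingularitiesMixedChar}). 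Rationality descends from the completion for excellent local rings, so $S$ itself is a Gorenstein rational singularity of dimension $2$ in mixed characteristic $(0,p>5)$, excellent and strictly Henselian.

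Now \autoref{Main Theorem covers by regular ring} applied to $S$ produces a finite extension $S \hookrightarrow T$, split as $S$-modules, with $T$ regular. The composition with the split inclusion $R \hookrightarrow S$ is a finite extension $R \hookrightarrow T$ that is split as $R$-modules (composing an $R$-linear retraction $S \to R$ with an $S$-linear retraction $T \to S$, viewed $R$-linearly), and $T$ is regular as required.

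The principal technical obstacle is ensuring that the canonical cover $S$ really falls inside the scope of the Main Theorem, which boils down to two facts: that \BCMReg{B} passes to the canonical cover (supplied by \autoref{lem.CyclicCoverOfBCMRegIsBCMReg}) and that, in the Gorenstein case, it implies the singularity is rational; together with the routine verifications that finite local extensions of strictly Henselian excellent rings are strictly Henselian and excellent, that completion commutes with the formation of the cyclic cover, and that rationality of the completion descends to the ring itself. All of these are either provided by the paper or are standard consequences of excellence.
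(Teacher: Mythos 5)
Your proposal is correct and follows essentially the same route as the paper: pass to the canonical cyclic cover $S$, use \autoref{lem.CyclicCoverOfBCMRegIsBCMReg} to see $\widehat{S}$ is \BCMReg{B} and hence that $S$ is a Gorenstein rational singularity, apply Theorem~\autoref{Main Theorem covers by regular ring} to $S$, and compose the two splittings. The only (minor) divergence is how $\bQ$-Gorensteinness of $R$ itself is obtained so that the cover can be formed over $R$: you descend the Cartier index of $K_{\widehat R}$ via injectivity of $\Cl(R)\to\Cl(\widehat R)$, whereas the paper first deduces that $R$ is a rational surface singularity (via \BCMRat{B} $\Rightarrow$ pseudo-rational) and then invokes Lipman's Proposition~17.1; both are valid.
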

\begin{proof}
Since $\widehat{R}$ is \BCMReg{B}, $\widehat{R}$ is \BCMRat{B} and hence pseudo-rational by \cite[Theorem 6.12, Proposition 3.7]{MaSchwedeSingularitiesMixedChar}. Since $R$ has dimension $2$, $R$ is thus a rational singularity and so $R$ is $\bQ$-Gorenstein by \cite[Proposition 17.1]{LipmanRationalSingularities}. Let $S$ be the canonical cover of $R$. Then $\widehat{S}$ is \BCMReg{B} by \autoref{lem.CyclicCoverOfBCMRegIsBCMReg}, so by the same reasoning $S$ is a rational singularity (i.e., by \cite[Theorem 6.12, Proposition 3.7]{MaSchwedeSingularitiesMixedChar}). But now since $S$ is Gorenstein, we can invoke Theorem~\autoref{Main Theorem covers by regular ring} to see that there exists a finite split extension $S\subset T$ such that $T$ is regular. Since $R\to S$ splits, the composition of extensions $R\subset T$ completes the proof.
\end{proof}

\bibliographystyle{skalpha}
\bibliography{MainBib}
\end{document}